\theoremstyle{plain}
\newtheorem{thm}{Theorem}
\newtheorem{lem}{Lemma}
\newtheorem{cor}{Corollary}
\newtheorem{prop}{Proposition}
\theoremstyle{definition}
\newtheorem{dfn}{Definition}
\newtheorem{ex}{Example}
\theoremstyle{remark}
\newtheorem{rem}{Remark}
\newtheorem{opp}{Open Problem}
\newcommand{\ph}{\varphi}
\newcommand{\C}{\mathbb{C}}
\newcommand{\K}{\mathbb{K}}
\title{Characterization of field homomorphisms through Pexiderized functional equations}
\author{Eszter Gselmann, Gergely Kiss and Csaba Vincze}
\begin{document}

\maketitle

\begin{abstract}
  The aim of this paper is to prove characterization theorems for field homomorphisms. More precisely,
  the main result investigates the following problem.
  Let $n\in \mathbb{N}$ be arbitrary, $\mathbb{K}$ a field and
  $f_{1}, \ldots, f_{n}\colon \mathbb{K}\to \mathbb{C}$ additive functions.
  Suppose further that equation
\[
 \sum_{i=1}^{n}f^{q_{i}}_{i}\left(x^{p_{i}}\right)=0
 \qquad
 \left(x\in \mathbb{K}\right)
\]
is also satisfied. Then the functions $f_{1}, \ldots, f_{n}$ are
linear combinations of field homomorphisms from $\mathbb{K}$ to
$\mathbb{C}$.
\end{abstract}

\begin{center}
 \emph{Dedicated to the $70$\textsuperscript{th} birthday of Professor Miklós Laczkovich}
\end{center}

\section{Introduction}
The main purpose of this work is to put the previous investigations
into a unified framework and to prove characterization theorems for
field homomorphisms. The problem to be studied reads as follows.

Let $n\in \mathbb{N}$ be arbitrary, $\mathbb{K}$ a field and let
$f_{1}, \ldots, f_{n}\colon \mathbb{K}\to \mathbb{C}$ be additive
functions. Suppose further that we are given natural numbers $p_{1},
\ldots, p_{n}, q_{1}, \ldots, q_{n}$ so that
\[
\tag{$\mathscr{C}$}
 \begin{array}{rcl}
  p_{i}&\neq&p_{j} \qquad \text{ for } \quad i\neq j\\
  q_{i}&\neq&q_{j} \qquad \text{ for } \quad i\neq j\\
  1<p_{i}\cdot q_{i}&=&N\qquad \text{ for } i=1, \ldots, n.
 \end{array}
\]
Suppose also that equation
\begin{equation}\label{Eq1.3}
 \sum_{i=1}^{n}f^{q_{i}}_{i}\left(x^{p_{i}}\right)=0
\end{equation}
is satisfied. 
Throughout this paper we always assume that the field
$\mathbb{K}$ has characteristic 0 (about the problem on other fields
we refer to Open problem 4 in Section \ref{s5}). In what follows, we
show that equation \eqref{Eq1.3} along with condition
$(\mathscr{C})$ is suitable to characterize homomorphisms acting
between the fields $\mathbb{K}$ and $\mathbb{C}$.

\begin{rem}
 Obviously, solving functional equation \eqref{Eq1.3} is meaningful without condition
 $(\mathscr{C})$.  At the same time, we have to point out that without this condition we cannot
 expect in general that all the solutions are linear combinations of homomorphisms, or it can happen that the general problem
 can be reduced to the above formulated problem.

 Indeed, if conditions
 \[
   1<p_{i}\cdot q_{i}=N\qquad \text{ for } i=1, \ldots, n
 \]
are not satisfied, then the homogeneous terms of the same degree can
be collected together, provided that $\mathbb{K}$ is of
characteristic zero (in such a situation we have $\mathbb{Q}\subset
\mathbb{K}$). To show this, assume that
\[
\begin{array}{rcl}
 p_{i}q_{i}&=&N_{1} \quad i=1, \ldots, k_{1}\\
 p_{i}q_{i}&=&N_{2} \quad i=k_{1}+1, \ldots, k_{2}\\
&\vdots&  \\
p_{i}q_{i}&=&N_{j+1} \quad i=k_{j}+1, \ldots, n\\
\end{array}
\]
where the positive integers $N_{1}, \ldots, N_{j+1}$ are different.
Let $r\in \mathbb{Q}$ and $x\in \mathbb{K}$ be arbitrary and
substitute $rx$ in place of $x$ in equation \eqref{Eq1.3} to get
\begin{multline*}
 0
 =
 \sum_{i=1}^{n}f^{q_{i}}_{i}\left((rx)^{p_{i}}\right)
 =
 \sum_{i=1}^{n}r^{p_{i}q_{i}}f^{q_{i}}_{i}\left(x^{p_{i}}\right)
 \\
 =
r^{N_{1}} \sum_{i=1}^{k_{1}}f^{q_{i}}_{i}\left(x^{p_{i}}\right)+
r^{N_{2}}
\sum_{i=k_{1}+1}^{k_{2}}f^{q_{i}}_{i}\left(x^{p_{i}}\right)+ \cdots+
r^{N_{j+1}} \sum_{i=k_{j}+1}^{n}f^{q_{i}}_{i}\left(x^{p_{i}}\right).
\end{multline*}
Observe that the right hand side of this identity is a polynomial of
$r$ for any fixed $x\in \mathbb{K}$, that has infinitely many zeros.
This yields however that this polynomial cannot be nonzero,
providing that all of its coefficients have to be zero, i.e.,
\[
 \begin{array}{rcl}
\displaystyle\sum_{i=1}^{k_{1}}f^{q_{i}}_{i}\left(x^{p_{i}}\right)&=&0 \\[3mm]
\displaystyle \sum_{i=k_{1}+1}^{k_{2}}f^{q_{i}}_{i}\left(x^{p_{i}}\right)&=&0 \\[3mm]
&\vdots& \\
\displaystyle\sum_{i=k_{j}+1}^{n}f^{q_{i}}_{i}\left(x^{p_{i}}\right)&=&0
 \end{array}
\]
This means that in such a situation the original problem can be
split into several problems, where condition $(\mathscr{C})$ already
holds.

On the other hand,  if condition
\[
 \begin{array}{rcl}
   p_{i}&\neq&p_{j} \qquad \text{ for } \quad i\neq j\\
  q_{i}&\neq&q_{j} \qquad \text{ for } \quad i\neq j\\
 \end{array}
\]
is not satisfied then in general we cannot expect that the solutions
are linear combinations of field homomorphisms. Namely, in such a
situation \emph{arbitrary} additive functions can occur as solution,
even in the simplest cases.

To see this, let $p, q\in \mathbb{N}$ be arbitrarily fixed and let
$a\colon \mathbb{K}\to \mathbb{C}$ be an \emph{arbitrary} additive
function. Furthermore, assume that for the complex constants
$\alpha_{1}, \ldots, \alpha_{n}$, identity
\[
 \alpha_{1}^{q}+\cdots+\alpha_{n}^{q}=0
\]
holds and consider the additive functions
\[
 f_{i}(x)=\alpha_{i}a(x)
 \qquad
 \left(x\in \mathbb{K}\right).
\]
Clearly, equation
\[
 \sum_{i=1}^{n}f_{i}(x^{p})^{q}=0
\]
is fulfilled for all $x\in \mathbb{K}$. At the same time, in general
we cannot state that any of these functions is a linear combination
of field homomorphisms.
\end{rem}

\section{Theoretical background}

In this section we collect some results concerning multiadditive
functions, polynomials and exponential polynomials and differential
operators. This collection highlights the main theoretical ideas
that we follow subsequently.

\subsection{The symmetrization method}
\begin{dfn}
 Let $G, S$ be commutative semigroups, $n\in \mathbb{N}$ and let $A\colon G^{n}\to S$ be a function.
 We say that $A$ is \emph{$n$-additive} if it is a homomorphism of $G$ into $S$ in each variable.
 If $n=1$ or $n=2$ then the function $A$ is simply termed to be \emph{additive}
 or \emph{biadditive}, respectively.
\end{dfn}

The \emph{diagonalization} or \emph{trace} of an $n$-additive
function $A\colon G^{n}\to S$ is defined as
 \[
  A^{\ast}(x)=A\left(x, \ldots, x\right)
  \qquad
  \left(x\in G\right).
 \]
As a direct consequence of the definition each $n$-additive function
$A\colon G^{n}\to S$ satisfies
\[
 A(x_{1}, \ldots, x_{i-1}, kx_{i}, x_{i+1}, \ldots, x_n)
 =
 kA(x_{1}, \ldots, x_{i-1}, x_{i}, x_{i+1}, \ldots, x_{n})
 \qquad
 \left(x_{1}, \ldots, x_{n}\in G\right)
\]
for all $i=1, \ldots, n$, where $k\in \mathbb{N}$ is arbitrary. The
same identity holds for any $k\in \mathbb{Z}$ provided that $G$ and
$S$ are groups, and for $k\in \mathbb{Q}$, provided that $G$ and $S$
are linear spaces over the rationals. For the diagonalization of $A$
we have
\[
 A^{\ast}(kx)=k^{n}A^{\ast}(x)
 \qquad
 \left(x\in G\right).
\]

One of the most important theoretical results concerning
multiadditive functions is the so-called \emph{Polarization
formula}, that briefly expresses that every $n$-additive symmetric
function is \emph{uniquely} determined by its diagonalization under
some conditions on the domain as well as on the range. Suppose that
$G$ is a commutative semigroup and $S$ is a commutative group. The
action of the {\emph{difference operator}} $\Delta$ on a function
$f\colon G\to S$ is defined by the formula
\[\Delta_y f(x)=f(x+y)-f(x);\]
note that the addition in the argument of the function is the
operation of the semigroup $G$ and the subtraction means the inverse
of the operation of the group $S$.

\begin{thm}[Polarization formula]\label{Thm_polarization}
 Suppose that $G$ is a commutative semigroup, $S$ is a commutative group, $n\in \mathbb{N}$ and $n\geq 1$.
 If $A\colon G^{n}\to S$ is a symmetric, $n$-additive function, then for all
 $x, y_{1}, \ldots, y_{m}\in G$ we have
 \[
  \Delta_{y_{1}, \ldots, y_{m}}A^{\ast}(x)=
  \left\{
  \begin{array}{rcl}
   0 & \text{ if} & m>n \\
   n!A(y_{1}, \ldots, y_{m}) & \text{ if}& m=n.
  \end{array}
  \right.
 \]

\end{thm}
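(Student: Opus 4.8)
The plan is to induct on $m$, the number of increments applied to the difference operator. First I would establish the base case $m=1$: for a symmetric $n$-additive $A$, expanding $A^{\ast}(x+y_1)=A(x+y_1,\dots,x+y_1)$ by $n$-additivity in each slot gives a sum over all $2^n$ choices of picking $x$ or $y_1$ in each coordinate; subtracting $A^{\ast}(x)$ removes the all-$x$ term, so $\Delta_{y_1}A^{\ast}(x)$ is a sum of $n$-additive expressions each involving at least one $y_1$. Crucially, by symmetry this is a polynomial-type expression in $x$ of total degree $n-1$ (the term with exactly $k$ copies of $y_1$ contributes $\binom{n}{k}A(y_1,\dots,y_1,x,\dots,x)$, homogeneous of degree $n-k$ in $x$), and when $n=1$ it is exactly $A(y_1)$, a constant. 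This identifies $\Delta_{y_1}A^{\ast}$ as (the diagonalization of) a sum of symmetric multiadditive functions of orders $0,1,\dots,n-1$.

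The inductive step is then the heart of the argument. Applying $\Delta_{y_2}$ to $\Delta_{y_1}A^{\ast}(x)$ and iterating, each application of a difference operator lowers the top degree in $x$ by one and peels off one more increment variable. I would make this precise by first proving the auxiliary fact that for a symmetric $k$-additive function $B$, $\Delta_y B^{\ast}(x)$ decomposes as $k\,B(y,x,\dots,x)$ plus lower-order terms in $x$ — i.e. the difference operator applied to a "degree-$k$" diagonal produces a "degree-$(k-1)$" diagonal whose leading part is $k$ times the function with one slot frozen at $y$. Chaining this $m$ times, after $m$ applications the surviving leading term is $n(n-1)\cdots(n-m+1)\,A(y_1,\dots,y_m,x,\dots,x)$ with $n-m$ remaining diagonal slots, plus terms of strictly lower degree in $x$. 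When $m=n$ the leading term is $n!\,A(y_1,\dots,y_n)$ and there are no remaining slots, so no lower-order terms survive, giving exactly $n!\,A(y_1,\dots,y_m)$; when $m>n$ the degree in $x$ has been driven below zero, i.e. the whole expression has already collapsed to $0$ after $n+1$ steps and stays $0$.

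The main obstacle I anticipate is purely bookkeeping: carefully tracking the multinomial coefficients and the "lower-order remainder" through the iteration, and justifying that the remainder terms genuinely vanish rather than merely having lower degree. The cleanest way to handle this is to avoid explicit coefficient chasing and instead argue by a secondary induction on $n$ for the statement "$\Delta_{y_1}A^{\ast}$ equals a sum $\sum_{k=0}^{n-1} C_k^{\ast}$ where $C_k$ is symmetric $k$-additive and $C_{n-1}(z) = n\,A(y_1,z,\dots,z)$ up to the usual symmetrization," so that the $m>n$ case follows because after $n$ steps we reach a $0$-additive (constant) function and one more difference annihilates it, while the $m=n$ case follows by reading off the coefficient $n!$ accumulated along the chain $n,(n-1),\dots,1$. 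I would also note at the outset that $\Delta_{y_i}$ and $\Delta_{y_j}$ commute, so the order of the increments is irrelevant and the final expression is automatically symmetric in $y_1,\dots,y_m$, consistent with the symmetry of $A$.
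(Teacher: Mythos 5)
The paper states the Polarization formula as a classical result and gives no proof of its own, so there is nothing to compare against line by line; judged on its own terms, your plan is the standard and correct argument. The binomial expansion $A^{\ast}(x+y)=\sum_{k=0}^{n}\binom{n}{k}A(y,\ldots,y,x,\ldots,x)$ does exactly what you say, and the one point you rightly flag as delicate --- that the lower-order remainders genuinely die rather than merely drop in degree --- is settled by your secondary induction: a diagonal of a symmetric $k$-additive function is annihilated by more than $k$ differences, so every $C_k$ with $k\leq n-2$ vanishes under the remaining $n-1$ increments, leaving only the leading chain $n(n-1)\cdots 1\,A(y_1,\ldots,y_n)=n!\,A(y_1,\ldots,y_n)$; the case $m>n$ follows since after $n$ steps one is left with a constant. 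Carried out as a simultaneous induction on $n$ for the two assertions ($m>n$ gives $0$; $m=n$ gives $n!A$), this closes the gap you anticipated.
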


\begin{cor}
 Suppose that $G$ is a commutative semigroup, $S$ is a commutative group, $n\in \mathbb{N}$ and $n\geq 1$.
 If $A\colon G^{n}\to S$ is a symmetric, $n$-additive function, then for all $x, y\in G$
 \[
  \Delta^{n}_{y}A^{\ast}(x)=n!A^{\ast}(y).
\]
\end{cor}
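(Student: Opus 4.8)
The plan is to obtain this statement as an immediate specialization of the Polarization formula (Theorem~\ref{Thm_polarization}). Recall that for a symmetric, $n$-additive function $A\colon G^{n}\to S$ that formula asserts, in the case $m=n$, that
\[
 \Delta_{y_{1},\ldots,y_{n}}A^{\ast}(x)=n!\,A(y_{1},\ldots,y_{n})
 \qquad
 \left(x,y_{1},\ldots,y_{n}\in G\right).
\]
The idea is simply to substitute $y_{1}=y_{2}=\cdots=y_{n}=y$ in this identity and to read off both sides.

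First I would observe that the iterated difference operator $\Delta^{n}_{y}$ is, by definition, the $n$-fold composition $\Delta_{y}\circ\cdots\circ\Delta_{y}$, and this is precisely $\Delta_{y_{1},\ldots,y_{n}}$ with all increments set equal to $y$; no commutation argument among the $\Delta_{y_{i}}$ is required here, since the increments already coincide. Hence the left-hand side of the displayed identity becomes $\Delta^{n}_{y}A^{\ast}(x)$. On the right-hand side, the same substitution produces $n!\,A(y,\ldots,y)$, which by the very definition of the diagonalization (trace) of $A$ equals $n!\,A^{\ast}(y)$. Putting the two computations together gives $\Delta^{n}_{y}A^{\ast}(x)=n!\,A^{\ast}(y)$, as claimed.

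There is essentially no obstacle: the corollary is a routine consequence of Theorem~\ref{Thm_polarization}, and the only point worth noting is the bookkeeping that identifies the multivariable difference operator $\Delta_{y_{1},\ldots,y_{n}}$ on its ``diagonal'' with $\Delta^{n}_{y}$. It is also worth recording, as a sanity check, that the resulting expression $\Delta^{n}_{y}A^{\ast}(x)$ does not depend on $x$ — a feature already visible in the general formula, whose right-hand side is $x$-free — in accordance with the heuristic that $A^{\ast}$ behaves like a ``homogeneous polynomial of degree $n$'' whose $n$-th difference is constant.
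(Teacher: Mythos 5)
Your proposal is correct and is exactly the derivation the paper intends: the corollary is obtained by specializing the Polarization formula (Theorem~\ref{Thm_polarization}) to $y_{1}=\cdots=y_{n}=y$, identifying $\Delta_{y,\ldots,y}$ with $\Delta^{n}_{y}$ and $A(y,\ldots,y)$ with $A^{\ast}(y)$. The paper gives no separate proof precisely because this is the immediate specialization you describe.
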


\begin{lem}
\label{mainfact}
  Let $n\in \mathbb{N}$, $n\geq 1$ and suppose that the multiplication by $n!$ is surjective in the commutative semigroup $G$ or injective in the commutative group $S$. Then for any symmetric, $n$-additive function $A\colon G^{n}\to S$, $A^{\ast}\equiv 0$ implies that
  $A$ is identically zero, as well.
\end{lem}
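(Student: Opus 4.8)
The plan is to reduce everything to the Polarization formula. I would fix a symmetric $n$-additive function $A\colon G^{n}\to S$ with $A^{\ast}\equiv 0$ and apply Theorem~\ref{Thm_polarization} with $m=n$: for arbitrary $y_{1},\dots,y_{n}\in G$ and any $x\in G$,
\[
 n!\,A(y_{1},\dots,y_{n})=\Delta_{y_{1},\dots,y_{n}}A^{\ast}(x).
\]
Since $A^{\ast}$ is identically zero and every difference operator $\Delta_{y}$ sends the zero function to the zero function, the right-hand side vanishes, so
\[
 n!\,A(y_{1},\dots,y_{n})=0\qquad\text{for all }y_{1},\dots,y_{n}\in G.
\]
This single identity already carries the whole content of the lemma; the two alternative hypotheses are just the two natural ways of cancelling the factor $n!$.

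If multiplication by $n!$ is injective on $S$, then $n!\,A(y_{1},\dots,y_{n})=0$ immediately gives $A(y_{1},\dots,y_{n})=0$, hence $A\equiv 0$. If instead multiplication by $n!$ is surjective on $G$, then for given $y_{1},\dots,y_{n}\in G$ I would pick $z\in G$ with $y_{1}=n!\,z$ and use additivity in the first variable together with the displayed identity applied to the tuple $(z,y_{2},\dots,y_{n})$:
\[
 A(y_{1},y_{2},\dots,y_{n})=A(n!\,z,y_{2},\dots,y_{n})=n!\,A(z,y_{2},\dots,y_{n})=0.
\]
Since $y_{1},\dots,y_{n}$ were arbitrary, $A\equiv 0$ in this case as well.

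I do not expect a real obstacle here; the only point to watch in the surjectivity case is that one should extract a single factor of $n!$ from just one variable — writing every $y_{i}$ as $n!\,z_{i}$ and trying to cancel $(n!)^{n}$ is unnecessary, since $n!\,A(\cdot)=0$ holds on all tuples and finishes the job after one step.
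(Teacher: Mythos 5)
Your proof is correct and is exactly the argument the paper intends: the lemma is stated immediately after the Polarization formula precisely because $\Delta_{y_{1},\ldots,y_{n}}A^{\ast}(x)=n!\,A(y_{1},\ldots,y_{n})$ reduces $A^{\ast}\equiv 0$ to $n!\,A\equiv 0$, after which either hypothesis cancels the factor $n!$. Your remark that in the surjectivity case one should pull a single factor $n!$ out of one variable (rather than cancelling $(n!)^{n}$) is the right way to handle that branch.
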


The polarization formula plays the central role in the investigation
of functional equations characterizing homomorphisms.

\subsection{Polynomial and exponential functions}

In what follows $(G, \cdot)$ is assumed to be  a commutative group.

\begin{dfn}
{\it Polynomials} are elements of the algebra generated by additive
functions over $G$. 
Namely, if $n$ is a positive integer, $P\colon\mathbb{C}^{n}\to \mathbb{C}$ is a (classical) complex polynomial in
 $n$ variables and $a_{k}\colon G\to \mathbb{C}\; (k=1, \ldots, n)$ are additive functions, then the function
 \[
  x\longmapsto P(a_{1}(x), \ldots, a_{n}(x))
 \]
is a polynomial and, also conversely, every polynomial can be
represented in such a form.
\end{dfn}

\begin{rem}
 We recall that elements of $\mathbb{N}^{n}$ for any positive integer $n$ are called
 ($n$-dimensional) \emph{multi-indices}.
 Addition, multiplication and inequalities between multi-indices of the same dimension are defined component-wise.
 Further, we define $x^{\alpha}$ for any $n$-dimensional multi-index $\alpha$ and for any
 $x=(x_{1}, \ldots, x_{n})$ in $\mathbb{C}^{n}$ by
 \[
  x^{\alpha}=\prod_{i=1}^{n}x_{i}^{\alpha_{i}}
 \]
where we always adopt the convention $0^{0}=0$. We also use the
notation $\left|\alpha\right|= \alpha_{1}+\cdots+\alpha_{n}$. With
these notations any polynomial of degree at most $N$ on the
commutative semigroup $G$ has the form
\[
 p(x)= \sum_{\left|\alpha\right|\leq N}c_{\alpha}a(x)^{\alpha}
 \qquad
 \left(x\in G\right),
\]
where $c_{\alpha}\in \mathbb{C}$ and $a\colon G\to \mathbb{C}^{n}$
is an additive function. Furthermore, the \emph{homogeneous
term of degree $k$} of $p$ is
\[
 \sum_{\left|\alpha\right|=k}c_{\alpha}a(x)^{\alpha} .
\]
\end{rem}

\begin{lem}[Lemma 2.7 of \cite{Sze91}]\label{L_lin_dep}
 Let $G$ be a commutative group,
 $n$ be a positive integer and let
 \[
  a=\left(a_{1}, \ldots, a_{n}\right),
 \]
where $a_{1}, \ldots, a_{n}$ are linearly independent complex valued
additive functions defined on $G$. Then the monomials
$\left\{a^{\alpha}\right\}$ for different multi-indices are linearly
independent.
\end{lem}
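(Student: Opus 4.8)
The plan is to recast the claim as a statement about classical polynomials. Write $a=(a_{1},\ldots,a_{n})\colon G\to\mathbb{C}^{n}$. A linear relation $\sum_{\alpha}c_{\alpha}a^{\alpha}\equiv 0$ among the monomials is precisely the assertion that the complex polynomial $P(z_{1},\ldots,z_{n})=\sum_{\alpha}c_{\alpha}z^{\alpha}$ vanishes identically on the image $a(G)\subseteq\mathbb{C}^{n}$. Hence it suffices to prove that a nonzero polynomial $P$ cannot vanish on all of $a(G)$.

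First I would observe that $a(G)$ spans $\mathbb{C}^{n}$ as a complex vector space. Indeed, $a(G)$ is a subgroup of $(\mathbb{C}^{n},+)$, and if its span were a proper subspace $V$, then a nonzero linear functional $z\mapsto\sum_{i}\lambda_{i}z_{i}$ vanishing on $V$ would yield $\sum_{i}\lambda_{i}a_{i}\equiv 0$, contradicting the linear independence of $a_{1},\ldots,a_{n}$. Therefore one can choose $x_{1},\ldots,x_{n}\in G$ such that $a(x_{1}),\ldots,a(x_{n})$ form a basis of $\mathbb{C}^{n}$; let $M$ be the invertible matrix whose columns are these vectors.

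Next, by additivity of each $a_{i}$, for every $m=(m_{1},\ldots,m_{n})\in\mathbb{Z}^{n}$ we have $a(m_{1}x_{1}+\cdots+m_{n}x_{n})=m_{1}a(x_{1})+\cdots+m_{n}a(x_{n})=Mm$. Consequently, if $P$ vanishes on $a(G)$, then the polynomial $Q:=P\circ M$ satisfies $Q(m)=0$ for all $m\in\mathbb{Z}^{n}$. A routine induction on $n$ finishes the argument: fixing integer values of $m_{1},\ldots,m_{n-1}$, the one-variable polynomial $m_{n}\mapsto Q(m)$ has infinitely many zeros, so all of its coefficients, which are polynomials in $m_{1},\ldots,m_{n-1}$, vanish on $\mathbb{Z}^{n-1}$, and one iterates. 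Thus $Q\equiv 0$, and since $M$ is invertible $P=Q\circ M^{-1}\equiv 0$, forcing every $c_{\alpha}=0$. The only point requiring a word of care is the empty multi-index: the corresponding constant term of $P$ is left fixed by the substitution $z=Mm$, so it too is recovered as zero.

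I do not expect a genuine obstacle here; the whole content is the transfer from an identity on the abstract group $G$ to a polynomial identity over $\mathbb{C}$, accomplished by exhibiting a full-rank lattice $M\mathbb{Z}^{n}\subseteq a(G)$ and invoking the Zariski density of $\mathbb{Z}^{n}$ in $\mathbb{C}^{n}$. An alternative route, the one taken in Székelyhidi's monograph, is a direct induction on $n$ in which the difference operator $\Delta_{y}$ is applied to peel off the top power of $a_{n}$; that approach is slightly more computational but never leaves the group setting.
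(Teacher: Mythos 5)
Your argument is correct, but note that the paper itself offers no proof of this lemma: it is imported verbatim as Lemma 2.7 of Sz\'ekelyhidi's monograph, so there is nothing in the text to compare against except the cited source. Your route --- observing that a linear relation among the $a^{\alpha}$ is the vanishing of a classical polynomial $P$ on $a(G)$, that $a(G)$ spans $\mathbb{C}^{n}$ by the linear independence of the $a_{i}$, and then pulling back along the full-rank lattice $M\mathbb{Z}^{n}\subseteq a(G)$ to invoke the fact that a polynomial vanishing on $\mathbb{Z}^{n}$ is identically zero --- is complete and self-contained; each step (the existence of a basis inside the spanning set $a(G)$, the identity $a(m_{1}x_{1}+\cdots+m_{n}x_{n})=Mm$ for integer $m$, which the paper itself records for additive functions on groups, and the recovery of the constant term) is justified. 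This differs from the difference-operator induction in Sz\'ekelyhidi's book, which peels off the top power of $a_{n}$ via $\Delta_{y}$ and stays entirely inside the group setting; your version trades that computation for a transfer to $\mathbb{C}^{n}$ and Zariski density of $\mathbb{Z}^{n}$, which is arguably more transparent and also makes visible exactly where linear independence over $\mathbb{C}$ (rather than over $\mathbb{Q}$ or $\mathbb{Z}$) is used. The only cosmetic friction is with the paper's stated convention $0^{0}=0$ in the definition of $x^{\alpha}$, under which $a^{\alpha}(x)$ and the evaluation $P_{\alpha}(a(x))$ of the classical monomial $z^{\alpha}$ can disagree at points where some $a_{i}(x)=0$ and $\alpha_{i}=0$; with the standard convention $0^{0}=1$ (which is what the lemma needs for the constant monomial to be the constant function $1$) your identification is exact, so this is a quirk of the paper's notation rather than a gap in your proof.
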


\begin{dfn}
A function $m\colon G\to \mathbb{C}$ is called an \emph{exponential}
function if it satisfies
\[
 m(xy)=m(x)m(y)
 \qquad
 \left(x,y\in G\right).
\]
Furthermore, on an  \emph{exponential polynomial} we mean a linear
combination of functions of the form $p \cdot m$, where $p$ is a
polynomial and $m$ is an exponential function.
\end{dfn}

It is worth to note that an exponential function is either nowhere
zero or everywhere zero.

The following lemma will be useful in the proof of Theorem
\ref{thm_sep}.
\begin{lem}[Lemma 6. of \cite{L14}]\label{lem_tils}
 Let $G$ be an Abelian group, and let $V$ be a translation invariant
linear subspace of all complex-valued functions defined on $G$. 
Suppose that $\sum_{i=1}^n p_i \cdot m_i \in V$, where $p_1,\ldots, p_n :G\to \mathbb{C}$ are
nonzero polynomials and $m_1,\ldots,m_n:G\to \mathbb{C}$ are distinct
exponentials for every $i = 1,\ldots,n$. Then $p_i \cdot m_i \in V$ and
$m_i \in V$ for every $i = 1,\ldots,n$.
\end{lem}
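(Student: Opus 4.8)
The plan is to exploit that, being a translation invariant linear subspace, $V$ is closed under every translation operator $\tau_{y}\colon g\mapsto g(\cdot\, y)$ and hence, for every exponential $m\colon G\to\mathbb{C}$, under the twisted difference operator $L^{m}_{y}:=\tau_{y}-m(y)\,\mathrm{id}$. Everything rests on the elementary identity
\[
 L^{m}_{y}\bigl(q\cdot m\bigr)=m(y)\,\bigl(\Delta_{y}q\bigr)\cdot m,
\]
valid for every polynomial $q$, which shows that $L^{m}_{y}$ lowers the degree of the polynomial factor of an exponential monomial built from $m$ by at least one, whereas on $q\cdot m'$ with $m'\neq m$ it leaves the degree unchanged, merely multiplying the leading homogeneous part of $q$ by $m'(y)-m(y)$. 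I would first replace $V$ by the linear span $W$ of all translates $\tau_{y}f$ of $f=\sum_{i=1}^{n}p_{i}m_{i}$; since a translation neither raises the degree of the polynomial factor nor changes the exponential factor, $W$ is a finite-dimensional, translation invariant subspace of $V$ contained in $\mathcal{E}:=\bigoplus_{i=1}^{n}\mathcal{E}_{i}$, where $\mathcal{E}_{i}$ is the (translation invariant) space of all exponential polynomials $q\cdot m_{i}$ with $\deg q\le\deg p_{i}$. The directness of this sum is the multi-exponential strengthening of Lemma~\ref{L_lin_dep}: an identity $\sum_{i}q_{i}m_{i}\equiv 0$ with pairwise distinct exponentials forces every $q_{i}\equiv 0$.

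The core of the proof is to show that the translation invariant subspace $W$ respects the decomposition $\mathcal{E}=\bigoplus_{i}\mathcal{E}_{i}$, i.e.\ $W=\bigoplus_{i}(W\cap\mathcal{E}_{i})$; granting this, $f\in W$ gives $p_{i}m_{i}\in W\cap\mathcal{E}_{i}\subseteq V$ for every $i$, which is the first assertion. To prove it I would regard $\mathcal{E}$ as a module over the finite-dimensional commutative algebra $\overline{S}\subseteq\mathrm{End}(\mathcal{E})$ generated by the translations, which is Artinian. By the displayed identity every translation acts on $\mathcal{E}_{i}$ as $m_{i}(y)\,\mathrm{id}$ plus a nilpotent operator, so the maximal ideals of $\overline{S}$ are exactly those induced by the characters $\chi_{i}\colon\sum_{y}c_{y}\delta_{y}\mapsto\sum_{y}c_{y}m_{i}(y)$; these are pairwise distinct because the exponentials $m_{i}$ are, and $\mathcal{E}=\bigoplus_{i}\mathcal{E}_{i}$ is precisely the corresponding primary (idempotent) decomposition of $\overline{S}$. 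Every submodule — in particular the translation invariant subspace $W$ — inherits this decomposition, which is what we want.

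For the remaining assertion $m_{i}\in V$, I would start from $p_{i}m_{i}\in V$ and apply $L^{m_{i}}_{y_{1}},\dots,L^{m_{i}}_{y_{d}}$ with $d=\deg p_{i}$; by the identity above this produces
\[
 m_{i}(y_{1})\cdots m_{i}(y_{d})\,\bigl(\Delta_{y_{1}}\cdots\Delta_{y_{d}}p_{i}\bigr)\cdot m_{i}\in V,
\]
and $\Delta_{y_{1}}\cdots\Delta_{y_{d}}p_{i}$ is a constant. By Lemma~\ref{L_lin_dep} the homogeneous term of degree $d$ of $p_{i}$ is a nonzero function; being the diagonalization of a symmetric $d$-additive function, that $d$-additive function is itself nonzero, and the Polarization formula (Theorem~\ref{Thm_polarization}) then shows that for a suitable choice of $y_{1},\dots,y_{d}$ the constant above — equal to $d!$ times a value of that $d$-additive function — is nonzero. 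Since every exponential is nowhere zero, $m_{i}\in V$ follows.

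The step I expect to be the main obstacle is the primary decomposition. One cannot simply diagonalize a single translation, because $G$ may coincide with the union of the proper subgroups $\{y\in G:m_{i}(y)=m_{j}(y)\}$ — for instance when $G$ is the Klein four-group and $m_{1},m_{2},m_{3}$ are three of its four characters — so that no single increment separates the exponentials simultaneously; it is the module-theoretic primary decomposition (equivalently, a careful composition of twisted difference operators with varying increments) that forces the argument through. A secondary technical point is the multi-exponential linear independence invoked for the directness of $\bigoplus_{i}\mathcal{E}_{i}$, which must be recorded separately from the form of Lemma~\ref{L_lin_dep} stated above.
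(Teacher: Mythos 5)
The paper does not prove this lemma at all: it is imported verbatim as Lemma~6 of \cite{L14}, so there is no internal proof to compare against. Judged on its own, your argument is essentially correct and is a clean repackaging of the mechanism behind Laczkovich's proof (which proceeds by induction using the same modified difference operators $\tau_y-m(y)\,\mathrm{id}$): the key identity, the reduction to the finite-dimensional span $W$ of the translates of $f$, the observation that each $\mathcal{E}_i$ is the primary component for the maximal ideal $\ker\chi_i$ so that the separating idempotents lie in the image of the group algebra and hence preserve every submodule, and the polarization argument producing a nonzero constant multiple of $m_i$ are all sound; you also correctly identify and circumvent the pitfall that no single increment need separate all the exponentials. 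One technical point should be repaired: $\mathcal{E}_i$ as you define it (all $q\cdot m_i$ with $\deg q\le\deg p_i$) is in general infinite-dimensional, since the space of additive functions on $G$ can be, so the algebra $\overline{S}$ generated by the translations on $\mathcal{E}$ is not obviously Artinian. Either cut $\mathcal{E}$ down to the finite-dimensional span of the monomials $a^{\alpha}m_i$ that actually occur in the translates of $f$ (which suffices, since $W$ lies there), or avoid Artinianness altogether by noting that the annihilator of $\mathcal{E}_i$ contains $\mathfrak{m}_i^{\deg p_i+1}$ with the $\mathfrak{m}_i=\ker\chi_i$ pairwise comaximal, so the Chinese Remainder Theorem already supplies the idempotents. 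With that adjustment the proof is complete; a further minor remark is that the degenerate case of the identically zero exponential should be set aside at the outset, since for a nonzero exponential the nonvanishing of $m_i(y_1)\cdots m_i(y_d)$ that you use is automatic.
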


\subsubsection{Algebraic independence}

As a remarkable ingredient of our argument, we recall a theorem of
Reich and Schwaiger \cite{SchRei84}. The original statement was
formulated for functions defined on $\mathbb{C}$ (with respect to addition).

\begin{thm}\label{thm_indep}
Let $k,l,N$ be positive integers such that $k,l\le N$. Let $m_1, \dots, m_k\colon \mathbb{C}\to \mathbb{C}$ be distinct nonconstant exponential functions, $a_1, \dots, a_l\colon \mathbb{C}\to \mathbb{C}$ additive functions that are linearly independent over $\mathbb{C}$. Then the functions $m_1, \dots, m_k, a_1, \dots, a_l$ are algebraically independent over $\C$. \\
In particular, Let $P_s\colon \mathbb{C}^l\to \mathbb{C}$ be a
classical complex polynomial of $l$ variables for all multi-index
$s$ satisfying  $|s|\le N$. Then the identity
\begin{equation}\label{eqindep}
    \sum_{s\colon |s|\le N } P_{s}(a_1, \dots, a_l) m_1^{s_1}\cdots m_k^{s_k}=0
\end{equation} implies that all polynomials $P_s$ vanish identically $(|s|\le N)$.
\end{thm}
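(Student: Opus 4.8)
The plan is to reduce the theorem to its ``in particular'' clause and then extract that clause from two facts already recorded above, namely Lemma~\ref{L_lin_dep} and Lemma~\ref{lem_tils}. First I would note that the ``in particular'' assertion is simply algebraic independence written out: any polynomial relation $Q(m_1,\dots,m_k,a_1,\dots,a_l)\equiv 0$, with $Q$ a classical polynomial in $k+l$ variables, after collecting the powers of the first $k$ variables takes the shape $\sum_{s}P_s(a_1,\dots,a_l)\,m_1^{s_1}\cdots m_k^{s_k}=0$ with each $P_s$ a classical polynomial in $l$ variables, and the relation is nontrivial precisely when some $P_s$ is a nonzero polynomial. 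Letting the degree bound $N$ be arbitrary (the side conditions $k\le N$, $l\le N$ are then automatic), it therefore suffices to show that the identity \eqref{eqindep} forces every $P_s$ to vanish identically.

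For this I would argue by contradiction, assuming that $I:=\{s:\ |s|\le N,\ P_s\neq 0\}$ is nonempty, and set $\mu_s:=m_1^{s_1}\cdots m_k^{s_k}$ for $s\in I$. Each $\mu_s$ is again an exponential function on $G=(\mathbb{C},+)$, hence nowhere zero. Since $a_1,\dots,a_l$ are linearly independent over $\mathbb{C}$, Lemma~\ref{L_lin_dep} shows that the monomials $a^{\alpha}$ in $a=(a_1,\dots,a_l)$ are linearly independent as functions on $G$; consequently, for every $s\in I$ the function $x\mapsto P_s(a_1(x),\dots,a_l(x))$ is a \emph{nonzero} polynomial on $G$. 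Applying Lemma~\ref{lem_tils} with the (trivially translation invariant) subspace $V=\{0\}$ to $\sum_{s\in I}P_s(a_1,\dots,a_l)\,\mu_s=0$ then yields $\mu_s\in V=\{0\}$ for each $s\in I$, i.e.\ $\mu_s\equiv 0$; this contradicts the fact that an exponential function is never zero. Hence $I$ is empty and all $P_s$ vanish identically.

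The step I expect to be the genuine obstacle is concealed in this last application: Lemma~\ref{lem_tils} demands that the exponentials fed into it be pairwise \emph{distinct}, so one must first know that $\mu_s\neq\mu_{s'}$ whenever $s\neq s'$. This does not follow from the distinctness of $m_1,\dots,m_k$ alone --- a multiplicative relation such as $m_2=m_1^2$ or $m_3=m_1m_2$ would identify two different multi-indices, and for such $m_i$ the conclusion genuinely breaks down. So the argument must begin by using the structural hypothesis on the $m_i$ in the Reich--Schwaiger setting (in effect, that $m_1,\dots,m_k$ generate a free abelian subgroup of the group of exponential functions, i.e.\ no nontrivial relation $\prod_i m_i^{t_i}\equiv 1$ with $t\in\mathbb{Z}^k\setminus\{0\}$ holds) to guarantee that distinct multi-indices produce distinct products $\mu_s$; equivalently, one normalizes at the outset to a multiplicatively independent family of exponentials. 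Once this is secured, the combination of Lemmas~\ref{L_lin_dep} and \ref{lem_tils} above finishes the proof with no further analytic difficulty.
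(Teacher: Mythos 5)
Your architecture is essentially the one the paper has in mind, so let me first place it: the paper does not actually prove Theorem \ref{thm_indep} --- it is recalled from \cite{SchRei84} --- and what the paper supplies instead is the sketch (A)--(C) adapting the argument to obtain Theorem \ref{thm_indep2}. That sketch consists of exactly your two ingredients: Lemma \ref{L_lin_dep} to pass from ``$P_s$ is a nonzero polynomial'' to ``$P_s(a_1,\dots,a_l)$ is a nonzero function,'' and a separation of the distinct exponential terms, which you package cleanly as Lemma \ref{lem_tils} applied with the translation invariant subspace $V=\{0\}$ (the paper instead invokes Lemma \ref{L_lin_dep} again ``or the argument of \cite[Theorem 6]{SchRei84}''). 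Your reduction of algebraic independence to the ``in particular'' clause and the contradiction via the nowhere-vanishing of nonconstant exponentials are both correct.

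The obstacle you flag is genuine, and you have located it precisely. Distinctness of $m_1,\dots,m_k$ does not make the products $\mu_s=m_1^{s_1}\cdots m_k^{s_k}$ pairwise distinct for distinct $s\in\mathbb{N}^k$, and without that the conclusion is actually false: $m_1(x)=e^{x}$ and $m_2(x)=e^{2x}$ are distinct nonconstant exponentials on $(\mathbb{C},+)$ with $m_1^2-m_2\equiv 0$, so Theorem \ref{thm_indep} as transcribed is missing a multiplicative-independence hypothesis. (The same imprecision appears in item (B) of the paper, where the criterion $m_1^{s_1}\cdots m_k^{s_k}\neq 1$ is quantified only over $\mathbb{N}^k$; the condition one needs is $\mu_s\neq\mu_{s'}$ for all distinct $s,s'\in\mathbb{N}^k$, i.e.\ no relation $\prod_i m_i^{t_i}\equiv 1$ with $0\neq t\in\mathbb{Z}^k$.) For the version the paper actually uses, Theorem \ref{thm_indep2}, the $m_i$ are distinct field homomorphisms and the quoted lemma of \cite{KisVar13} in its full strength --- for arbitrary distinct multisets $J,J'$, not only the special case $J'=\emptyset$ that the text emphasizes --- supplies exactly the distinctness of the $\mu_s$ that your application of Lemma \ref{lem_tils} requires. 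So your argument is complete for Theorem \ref{thm_indep2}; for Theorem \ref{thm_indep} itself the gap lies in the stated hypotheses rather than in your reasoning, and your diagnosis of what must be added (multiplicative independence of the $m_i$, as in the original Reich--Schwaiger setting) is the correct repair.
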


Now we just focus on the last part of the statement. Most of the original
argument works without changes for functions defined on any Abelian
group. 
For an arbitrary field $\mathbb{K}$ we denote  $\mathbb{K}^{\times}$ (resp.
$\mathbb{K}^{+}$) the \emph{ multiplicative (resp. additive) group}
of $\mathbb{K}$.

\begin{enumerate}[(A)]
 \item Let $G$ be an Abelian group.
 If the additive functions $a_1, \dots, a_l: G \to \mathbb{C}$ are linearly independent over $\mathbb{C}$,
 then any system of terms $a_1^{s_1}\cdots a_l^{s_l}$ are also linearly independent over $\mathbb{C}$
 for different nonzero multi-indices $(s_1, \dots, s_l)\in \mathbb{N}^l$. Note that $s_1=\dots=s_l=0$ provides the constant functions.
 This statement is nothing but Lemma \ref{L_lin_dep}.
\item Nonconstant exponentials $m_1, \dots, m_k: G\to \mathbb{C}$ are algebraically independent 
if and only if $m_1^{s_1 }\cdots m_k^{s_k}\ne 1$ for any
$(s_1, \dots, s_k)\in \mathbb{N}^k$. The latter is not necessarily
holds in general. Indeed, for the $n$-ordered cyclic group
$\mathbb{Z}_{n}$ (with respect to addition) the statement is not true since
$\varphi^n\equiv 1$ for every character $\varphi: \mathbb{Z}_{n}\to
\mathbb{C}$. 

In our case, when $G=\mathbb{K}^{\times}$ and the
functions are additive on $\mathbb{K}^+$ the analogue holds.
Obviously, exponential functions on $\mathbb{K}^{\times}$ that are
additive on $\mathbb{K}^+$ are the field homomorphisms of
$\mathbb{K}$. Therefore none of them are constant.

Let $\varphi_1, \dots, \varphi_k$ be field homomorphisms. To show
that $\varphi_1^{s_1 }\cdots \varphi_k^{s_k}\ne 1$ for any nonzero multi-index 
$(s_1,\dots, s_k)\in \mathbb{N}^k$ is enough to find a witness element
$h\ne 0\in \mathbb{K}$ such that $\varphi_1^{s_1 }\cdots
\varphi_k^{s_k}(h)\ne 1$. As a special case ($J'=\emptyset$) we get
it from the following statement.
\begin{lem}(\cite[Lemma 3.3]{KisVar13})
 Let $\mathbb{K}$ be a field of characteristic 0, let $\varphi_1, \dots , \varphi_k\colon \mathbb{K}\to \mathbb{C}$ be distinct homomorphisms
for a positive integer $k$. Then there exists an element $0\ne h \in
\mathbb{K}$ such that
$$\prod_{j\in J}
\varphi_j (h) \ne \prod_{j' \in J'} \varphi_{j'}(h),$$ whenever $J$
and $J'$ are distinct multisets of the elements $1, \dots , k$.
\end{lem}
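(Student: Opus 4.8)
The plan is to first reformulate the statement. Since $h\neq0$ forces $\varphi_j(h)\neq0$ for every $j$ (a field homomorphism is injective), for multisets $J,J'$ with multiplicity vectors $a,b\in\mathbb{N}^{k}$ the equality $\prod_{j\in J}\varphi_j(h)=\prod_{j'\in J'}\varphi_{j'}(h)$ is the same as $\prod_{j=1}^{k}\varphi_j(h)^{a_j-b_j}=1$, and $a-b$ runs through all of $\mathbb{Z}^{k}$, the case $J\neq J'$ being exactly $a-b\neq0$. So the assertion is equivalent to: there is $h\neq0$ with $\prod_{j=1}^{k}\varphi_j(h)^{t_j}\neq1$ for every $t\in\mathbb{Z}^{k}\setminus\{0\}$, i.e. with $\varphi_1(h),\dots,\varphi_k(h)$ multiplicatively independent in $\mathbb{C}^{\times}$.

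Next I would record that \emph{each single relation can be avoided}. Distinct field homomorphisms are distinct characters of $\mathbb{K}^{\times}$, hence linearly independent over $\mathbb{C}$ (a $\mathbb{C}$-linear relation valid on $\mathbb{K}$ restricts to $\mathbb{K}^{\times}$), so by Lemma~\ref{L_lin_dep} applied with $G=\mathbb{K}^{+}$ and $a_i=\varphi_i$ the monomials $\varphi^{\alpha}=\prod_j\varphi_j^{\alpha_j}$ ($\alpha\in\mathbb{N}^{k}$) are linearly independent, in particular pairwise distinct. Writing $t=p-q$ with $p,q\in\mathbb{N}^{k}$ of disjoint support, $\prod_j\varphi_j^{t_j}\equiv1$ would mean $\varphi^{p}=\varphi^{q}$, which is false; moreover $\varphi^{p}-\varphi^{q}$ is a nonzero polynomial on $\mathbb{K}^{+}$, so $\{h:\prod_j\varphi_j(h)^{t_j}=1\}$ lies in the zero set of a nonzero polynomial, and likewise $\{h:\prod_j\varphi_j(h)^{t_j}\text{ is a root of unity}\}=\bigcup_{N\geq1}\{h:\varphi^{Np}(h)=\varphi^{Nq}(h)\}$ is a countable union of zero sets of nonzero polynomials.

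The real content — and the step I expect to be the main obstacle — is to pass from ``each relation fails somewhere'' to ``a single $h$ fails all relations at once'' (the zero-set observation alone is not enough, since $\mathbb{K}$ may be countable). I would first reduce to $\mathbb{K}$ finitely generated over $\mathbb{Q}$ by choosing finitely many witnesses separating the $\varphi_i$ and passing to the subfield they generate, distinctness (hence the conclusion) transferring back. Then I would aim to force $\log|\varphi_1(h)|,\dots,\log|\varphi_k(h)|$ to be linearly independent over $\mathbb{Q}$, which already suffices since $\prod_j\varphi_j(h)^{t_j}=1$ implies $\sum_j t_j\log|\varphi_j(h)|=0$; this is available unless two of the $\varphi_j$ induce the same archimedean absolute value, i.e. some pair equals $\{\psi,\overline{\psi}\}$, in which case I would additionally demand that $\pi$ together with the arguments $\arg\psi(h)$ over the conjugate pairs be $\mathbb{Q}$-independent, because a relation surviving the modulus constraint has the shape $\prod_m(\psi_m(h)/\overline{\psi_m(h)})^{a_m}=1$. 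To realise a single $h$ satisfying all these genericity conditions I would induct on $k$, using that the equalizers $\{x:\varphi_i(x)=\varphi_j(x)\}$ are proper subfields and a characteristic-$0$ field is never a finite union of proper $\mathbb{Q}$-subspaces, which lets one enlarge the supply of ``new'' values one homomorphism at a time; in the number-field case the argument becomes elementary by writing $\mathbb{K}=\mathbb{Q}(\alpha)$, testing on $h=\alpha-c$ with $c\in\mathbb{Q}$, and invoking unique factorization in $\mathbb{R}[X]$ — essentially the computation behind the cited lemma. The genuinely hard part throughout is the ``degree-zero'' relations ($\sum_j t_j=0$, together with the angular ones in the conjugate case), since these are invariant under the cheap moves of rescaling $h$ by a rational or replacing $h$ by a power.
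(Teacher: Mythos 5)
A preliminary remark on the comparison itself: the paper does not prove this lemma at all --- it is quoted verbatim from \cite[Lemma 3.3]{KisVar13} and used as a black box --- so there is no in-paper argument to measure your proposal against, and I can only assess it on its own terms.

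Your reformulation is correct: since $h\ne 0$ forces $\varphi_j(h)\in\mathbb{C}^{\times}$, the claim is exactly that $\varphi_1(h),\dots,\varphi_k(h)$ can be made multiplicatively independent, and your observation that each individual relation $\prod_j\varphi_j^{t_j}\equiv 1$ fails as an identity is also correct (distinct homomorphisms are linearly independent characters of $\mathbb{K}^{\times}$, so Lemma~\ref{L_lin_dep} makes $\varphi^{p}-\varphi^{q}$ a nonzero polynomial). Together with your valid reduction to a finitely generated subfield, this would already finish the proof if the multisets $J,J'$ had bounded cardinality: finitely many nonzero generalized polynomials always have a common non-root, since their product is again a nonzero polynomial by the same independence lemma. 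The genuine gap is exactly where you flag it and it is not closed: the lemma demands a single $h$ defeating \emph{all} $t\in\mathbb{Z}^{k}\setminus\{0\}$ at once, i.e.\ countably many conditions over a field that may itself be countable (a number field), so no finite-union, cardinality, measure or category argument is available. None of the devices you list supplies the missing mechanism. Forcing $\log|\varphi_1(h)|,\dots,\log|\varphi_k(h)|$ to be $\mathbb{Q}$-linearly independent is a verbatim restatement of (the modulus part of) the problem --- again countably many conditions, each merely ``avoidable individually''. The induction on $k$ via equalizers being proper subfields, and the fact that a field of characteristic $0$ is not a finite union of proper $\mathbb{Q}$-subspaces, only yield pairwise distinctness of the values $\varphi_j(h)$, which is far weaker than multiplicative independence. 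In the number-field sketch with $h=\alpha-c$, the set of bad $c$ for each fixed relation is indeed finite, but the union over all relations is a countable subset of the countable set $\mathbb{Q}$, so you cannot conclude that a good $c$ remains; the norm computation you allude to disposes only of the relations with $\sum_j t_j\ne 0$, and you yourself concede that the degree-zero relations survive every move you propose. A correct proof needs a genuine arithmetic input at this point --- for instance, choosing $h$ so that the principal ideals generated by the $\varphi_j(h)$ in a common number field involve pairwise distinct prime divisors, whence $\prod_j\varphi_j(h)^{t_j}=1$ forces $t=0$ by unique factorization of ideals after disposing of units and roots of unity --- and nothing of this kind appears in the proposal.
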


\item
Combining these facts and using Lemma \ref{L_lin_dep} or following the argument of \cite[Theorem
6.]{SchRei84} we get that if $a_1,\dots, a_l$ are linearly
independent and $m_1, \dots, m_k$ are nonconstant exponential
functions, then equation \eqref{eqindep} holds if and only if every
\allowbreak $P_{s}(a_1, \dots, a_l)\cdot \allowbreak m_1^{s_1}\cdots
m_k^{s_k}=0$ for all $s=(s_1, \dots, s_k)$, $|s|\leq N$.
\end{enumerate}
Applying (A)-(C) we get the following statement.

\begin{thm}\label{thm_indep2}
Let $\mathbb{K}$ be a field of characteristic 0 and $k,l,N$ be
positive integers such that $k,l\le N$. Let $m_1, \dots, m_k\colon
\mathbb{K}^{\times}\to \mathbb{C}$ be distinct exponential functions
that are additive on $\mathbb{K}^+$, let $a_1, \dots, a_l\colon
\mathbb{K}^{\times}\to \mathbb{C}$ be additive functions that are
linearly independent over $\mathbb{C}$ and let $ P_s\colon
\mathbb{C}^l\to \mathbb{C}$ be classical complex polynomials of $l$
variables for all $|s|\leq N$. Then the equation
\begin{equation}\label{eqindep2}
    \sum_{s\colon |s|\le N } P_{s}(a_1, \dots, a_l) m_1^{s_1}\cdots m_k^{s_k}=0
\end{equation}
implies that all polynomials $P_s$ vanish identically $(|s|\le N)$.
\end{thm}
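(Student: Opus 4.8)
The plan is to derive Theorem~\ref{thm_indep2} from the three facts (A)--(C) recorded just above, in exactly the same manner in which Theorem~\ref{thm_indep} is obtained over $\mathbb{C}$; the one substantive new ingredient is that here the relevant exponentials on $\mathbb{K}^{\times}$ are the field homomorphisms of $\mathbb{K}$, and this is the place where characteristic $0$ is used. I would begin by introducing, for each multi-index $s=(s_1,\dots,s_k)\in\mathbb{N}^k$, the function $n_s:=m_1^{s_1}\cdots m_k^{s_k}$; being a product of exponentials on $\mathbb{K}^{\times}$, $n_s$ is itself an exponential, hence nowhere zero (in fact $n_s(1)=1$). The key preliminary step is to verify that the $n_s$ with $|s|\le N$ are \emph{pairwise distinct}. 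Associate to $s$ the multiset $J_s$ of $\{1,\dots,k\}$ in which $i$ occurs $s_i$ times, so that $n_s(h)=\prod_{j\in J_s}m_j(h)$ for every $h\in\mathbb{K}$. If $s\ne t$, then $J_s\ne J_t$, and since by hypothesis $m_1,\dots,m_k$ are distinct field homomorphisms of $\mathbb{K}$ and $\operatorname{char}\mathbb{K}=0$, \cite[Lemma~3.3]{KisVar13} (with the $m_j$ in place of the $\varphi_j$) furnishes an element $0\ne h\in\mathbb{K}$ with $\prod_{j\in J_s}m_j(h)\ne\prod_{j\in J_t}m_j(h)$, i.e.\ $n_s(h)\ne n_t(h)$. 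This is precisely point (B).

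Next I would rewrite \eqref{eqindep2} as $\sum_{|s|\le N}P_s(a_1,\dots,a_l)\,n_s=0$ and consider the set $I$ of indices $s$ for which $P_s(a_1,\dots,a_l)$ is \emph{not} the zero function on $\mathbb{K}^{\times}$. If $I\ne\emptyset$, then in $\sum_{s\in I}P_s(a_1,\dots,a_l)\,n_s=0$ the coefficients $P_s(a_1,\dots,a_l)$ are nonzero polynomials on $\mathbb{K}^{\times}$ and, by the previous paragraph, the $n_s$ are distinct exponentials; hence Lemma~\ref{lem_tils}, applied with the (trivially translation-invariant) subspace $V=\{0\}$, gives $P_s(a_1,\dots,a_l)\cdot n_s\equiv0$ for each $s\in I$, and since $n_s$ never vanishes this forces $P_s(a_1,\dots,a_l)\equiv0$, contradicting $s\in I$. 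Therefore $I=\emptyset$, that is, $P_s(a_1,\dots,a_l)\equiv0$ for every $s$ with $|s|\le N$. (This is the content of (C); alternatively one can transcribe verbatim the argument of \cite[Theorem~6]{SchRei84}, which is valid over an arbitrary Abelian group.)

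Finally, fixing $s$ and writing $P_s(y)=\sum_\alpha c_\alpha y^\alpha$ as a finite sum over multi-indices $\alpha\in\mathbb{N}^l$, the identity $0\equiv P_s(a_1,\dots,a_l)=\sum_\alpha c_\alpha\,a_1^{\alpha_1}\cdots a_l^{\alpha_l}$ together with the linear independence of $a_1,\dots,a_l$ over $\mathbb{C}$ and Lemma~\ref{L_lin_dep} (point (A)) forces every $c_\alpha$ to vanish, hence $P_s\equiv0$, which is the assertion. I do not expect a genuine obstacle: the only real content lies in the first step, namely the separation lemma \cite[Lemma~3.3]{KisVar13}, which is already available, and the remaining steps are routine applications of Lemmas~\ref{lem_tils} and~\ref{L_lin_dep}; the one point deserving a moment's care is checking that $V=\{0\}$ is an admissible choice in Lemma~\ref{lem_tils} and that the indices with vanishing polynomial coefficient must be removed before the lemma is invoked.
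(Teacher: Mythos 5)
Your argument is correct and follows exactly the route the paper itself takes: the paper proves Theorem~\ref{thm_indep2} simply by declaring ``Applying (A)--(C) we get the following statement,'' and your three steps are precisely (B) (distinctness of the products $m_1^{s_1}\cdots m_k^{s_k}$ via \cite[Lemma~3.3]{KisVar13}), (C) (separation of exponential terms, which you realize cleanly via Lemma~\ref{lem_tils} with $V=\{0\}$), and (A) (Lemma~\ref{L_lin_dep}). You have merely written out in full the details the paper leaves implicit, so there is nothing to correct.
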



\subsection{ Levi-Civit\`{a}  equations}

As we will see in the next section, the so-called Levi-Civit\`{a}
functional equation will have a distinguished role in our
investigations. Thus, below the most important statements will be
summarized. Here we follow the notations and the terminology of
L.~Sz\'e\-kely\-hi\-di \cite{Sze91}, \cite{Sze06}.

\begin{thm}[Theorem 10.1 of \cite{Sze91}]
 Any finite dimensional translation invariant linear space of continuous complex valued functions on a topological Abelian group is spanned by 
 exponential polynomials.
\end{thm}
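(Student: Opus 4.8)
The plan is to turn the hypothesis into a statement about a commuting family of linear operators on a finite-dimensional complex vector space: the structure theory of such families will supply the exponential factors, and a nilpotency argument will supply the polynomial factors.

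Let $V$ be the space in question, $G$ the group and $n=\dim V$. First I would pass to the translation operators: for $y\in G$ let $T_{y}\colon V\to V$, $(T_{y}f)(x)=f(x+y)$; this is well defined by translation invariance, it preserves continuity, and $T_{y}T_{z}=T_{y+z}=T_{z}T_{y}$ because $G$ is Abelian. Hence $\{T_{y}:y\in G\}$ spans a commutative finite-dimensional subalgebra $\mathcal{A}\subseteq\mathrm{End}(V)$; I fix $y_{1},\dots,y_{r}\in G$ so that $T_{y_{1}},\dots,T_{y_{r}}$ generate $\mathcal{A}$ as a (unital) algebra. Decomposing $V$ into the simultaneous generalized eigenspaces of the finitely many commuting operators $T_{y_{1}},\dots,T_{y_{r}}$ gives $V=\bigoplus_{j}W_{j}$, where each $W_{j}$ is invariant under every operator commuting with all the $T_{y_{i}}$, in particular under all of $\mathcal{A}$, hence under every translation $T_{y}$. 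So each $W_{j}$ is again a finite-dimensional translation-invariant space of continuous functions, and it suffices to treat a single $W=W_{j}$.

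On $W$ every element of $\mathcal{A}|_{W}$ is a scalar multiple of the identity plus a nilpotent operator (it lies in the unital algebra generated by such elements, whose nilpotent parts commute), so there is an algebra homomorphism $\chi\colon\mathcal{A}|_{W}\to\mathbb{C}$ with $S-\chi(S)\,\mathrm{id}_{W}$ nilpotent for every $S$. Writing $m(y)=\chi(T_{y}|_{W})$ we get $T_{y}|_{W}=m(y)\,\mathrm{id}_{W}+N_{y}$ with the $N_{y}$ nilpotent and commuting; since $\chi$ is multiplicative and each $T_{y}$ is invertible, $m\colon G\to\mathbb{C}^{\times}$ is a nonvanishing homomorphism, and it is continuous because $m(y)=\tfrac{1}{\dim W}\,\mathrm{tr}(T_{y}|_{W})$ depends continuously on the matrix of $T_{y}|_{W}$, whose entries are continuous in $y$. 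Thus $m$ is an exponential. Now I would renormalize: set $U=\{f/m:f\in W\}$, which (as $m$ is continuous and nowhere zero) is again a finite-dimensional translation-invariant space of continuous functions, and a direct computation gives $T_{y}(f/m)=f/m+m(y)^{-1}(N_{y}f)/m$, i.e. $T_{y}|_{U}=\mathrm{id}_{U}+M_{y}$ with $M_{y}$ nilpotent and the $M_{y}$ pairwise commuting. Finally, using that $g\colon G\to\mathbb{C}$ is a polynomial of degree $<n$ exactly when $\Delta_{y_{1}}\cdots\Delta_{y_{n}}g\equiv 0$ for all $y_{i}$, and that for $g\in U$ one has $\Delta_{y_{1}}\cdots\Delta_{y_{n}}g=M_{y_{1}}\cdots M_{y_{n}}g=0$ (a product of $n$ pairwise commuting nilpotent operators on a space of dimension $\le n$ vanishes, by simultaneous strict triangularization), every element of $U$ is a polynomial. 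Hence every $f\in W$ is $m$ times a polynomial, i.e. an exponential polynomial, and since $V=\bigoplus_{j}W_{j}$, the space $V$ is spanned by exponential polynomials.

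The hard part will be the two places where one must go beyond pure linear algebra on $V$: reducing the whole family of translations to a finite algebra-generating set and checking that the simultaneous generalized eigenspaces remain invariant under every translation (not just the chosen $T_{y_{i}}$), and verifying continuity of the exponentials $m_{j}$ — this is the only point where the topology of $G$ and the continuity assumption on $V$ genuinely enter. Once the decomposition $V=\bigoplus_{j}W_{j}$ with $T_{y}|_{W_{j}}=m_{j}(y)\,\mathrm{id}+N_{y}$ is in hand, the remaining ``every translation unipotent $\Rightarrow$ polynomial'' step is essentially the definition of polynomial functions on a group recalled above and causes no real difficulty.
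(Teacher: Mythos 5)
First, a point of reference: the paper itself gives no proof of this statement --- it is quoted verbatim from Sz\'ekelyhidi [Sze91, Theorem 10.1] --- so your argument can only be measured against the standard proof of that theorem, and in essence it \emph{is} that proof: pass to the commuting family of translation operators, split $V$ into simultaneous generalized eigenspaces of finitely many algebra generators, extract one exponential per block from the scalar parts, and use unipotency of the translations on the renormalized block to land in the kernel of an iterated difference operator. The linear-algebra steps all check out: a simultaneous generalized eigenspace of commuting operators is invariant under anything commuting with them, hence under all $T_{y}$; $m$ is a nowhere-zero homomorphism because each $T_{y}$ is invertible with inverse $T_{-y}$; and your continuity argument for $m$ via the normalized trace works because the matrix entries $c_{ij}(y)$ of $T_{y}$ in a basis $f_{1},\dots,f_{d}$ of $W$ are linear combinations of the continuous functions $y\mapsto f_{i}(x_{k}+y)$ for finitely many points $x_{k}$ at which the matrix $\left(f_{j}(x_{k})\right)$ is invertible --- this is indeed the only place the topology enters.

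The one genuine gap is the final equivalence you invoke: ``$g$ is a polynomial of degree $<n$ exactly when $\Delta_{y_{1}}\cdots\Delta_{y_{n}}g\equiv 0$ for all $y_{i}$''. On a general Abelian group, Fr\'echet's equation $\Delta_{y_{1}}\cdots\Delta_{y_{n}}g\equiv 0$ characterizes \emph{generalized} polynomials, i.e.\ sums of diagonalizations $A_{k}^{\ast}$ of symmetric $k$-additive functions with $k<n$ (a theorem of Djokovi\'{c}, itself not a one-liner), and \emph{not} polynomials in the sense used in this paper and in [Sze91], namely elements of the algebra generated by additive functions. These two classes genuinely differ on general groups: a monomial $A_{k}^{\ast}$ need not be of the form $P(a_{1},\dots,a_{m})$ with the $a_{i}$ additive. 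To close the argument you need the further lemma --- and this is where the real content of Theorem 10.1 beyond pure linear algebra sits --- that a generalized polynomial contained in a finite-dimensional translation-invariant space is an honest polynomial; one proves it by applying finite-dimensionality to the partial traces $A_{k}(x,\dots,x,y)$ and inducting on $k$. With that lemma inserted after your difference-operator computation, the proof is complete.
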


In view of this theorem, if $(G, \cdot)$ is an Abelian group, then
any function $f\colon G\to \mathbb{C}$ satisfying the so-called
\emph{Levi-Civit\`{a} functional equation}, that is,
\begin{equation}\label{Eqlevi}
 f(x\cdot y)= \sum_{i=1}^{n}g_{i}(x)h_{i}(y)
 \qquad
 \left(x, y\in G\right)
\end{equation}
for some positive integer $n$ and functions $g_{i}, h_{i}\colon G\to
\mathbb{C}\; (i=1, \ldots, n)$, is an exponential polynomial of
order at most $n$. Indeed, equation \eqref{Eqlevi} expresses the fact that 
all the translates of the function $f$ belong to the same finite dimensional translation invariant linear space, namely 
\[
 \tau_{y}f \in \mathrm{lin} \left(g_{1}, \ldots, g_{n}\right) 
 \]
holds for all $y\in G$. 

Obviously, if the functions $h_{1}, \ldots, h_{n}$ are linearly
independent, then $g_{1}, \ldots, g_{n}$ are linear combinations of
the translates of $f$, hence they are exponential  polynomials of
order at most $n$, too. Moreover, they are built up from the same
additive and exponential functions as the function $f$.

Before presenting the solutions of equation \eqref{Eqlevi}, we
introduce some notions.

\begin{rem}
Let $k, n, n_{1}, \ldots, n_{k}$ be positive integers with
$n=n_{1}+\cdots+n_{k}$ and let for $j=1, \ldots, k$ the complex
polynomials $P_{j}, Q_{i, j}$ of $n_{j}-1$ variables and of degree
at most $n_{j}-1$ be given, $i=1, \ldots, n; j=1, \ldots, k$. For
any $j=1, \ldots, k$ and for arbitrary multi-indices
$I_{j}=\left(i_{1}, \ldots, i_{n_{j}-1}\right)$ and
$J_{j}=\left(j_{1}, \ldots, j_{n_{j}-1}\right)$ we define the
$n_{j}\times n_{j}$ matrix $M_{j}(P; I_{j}, J_{j})$ and the
$n_{j}\times n$ matrix $N_{j}(Q; I_{j})$ as follows: for any choice
of $p, q=0, 1, \ldots, n_{j}-1$ the $(n_{j}-p, n_{j}-q)$ element of
$M_{j}(P; I_{j}, J_{j})$ is given by
\[
 M_{j}(P; I_{j}, J_{j})_{(n_{j}-p, n_{j}-q)}=
 \begin{cases}
\frac{1}{p!q!}\partial_{i_{1}}\cdots \partial_{i_{p}}\partial_{j_{1}}\cdots\partial_{j_{q}}P_{j}\left(0, \ldots, 0\right)& \text{for } p+q< n_{j}\\
0 & \text{otherwise}
 \end{cases}
\]
and for any choice of $p=1, 2, \ldots, n_{j}$, $q=1, 2, \ldots, n$
the $(p, q)$ element of $N_{j}(Q; I_{j})$ is given by
\[
 N_{j}(Q; I_{j})_{p, q}=\dfrac{1}{(n_{j}-p)!}\partial_{i_{1}}\cdots \partial_{i_{n_{j}-p}}Q_{q, p}(0, \ldots, 0).
\]
Then let us define the $n\times n$ block matrices $M\left(P; I_{1},
\ldots, I_{k}, J_{1}, \ldots, J_{k}\right)$ and $N\left(Q; I_{1},
\ldots, I_{k}\right)$ by
\[
 M\left(P; I_{1}, \ldots, I_{k}, J_{1}, \ldots, J_{k}\right)
 =
 \begin{pmatrix}
\fbox{$M_{1}(P, I_{1}, J_{1})$} & 0 & \ldots & 0\\
 0 & \fbox{$M_{2}(P, I_{2}, J_{2})$} & 0 & \ldots \\
\vdots & 0  & \ddots& \vdots\\
\vdots& \vdots & & \fbox{$M_{k}(P, I_{k}, J_{k})$}
 \end{pmatrix}
\]
and
\[
N\left(Q; I_{1}, \ldots, I_{k}\right) =
\begin{pmatrix}
\fbox{$ N_{1}(Q; I_{1})$}\\
 \vdots\\
 \fbox{$N_{k}(Q; I_{k})$}
\end{pmatrix}.
\]

\end{rem}

The idea of using Levi-Civit\`a equations rely on Theorem 10.4 of
\cite{Sze91} which is the following.

\begin{thm}\label{Szekely}
 Let $G$ be an Abelian group, $n$ be a positive integer and $f, g_{i}, h_{i}\colon G\to \mathbb{C}\; (i=1, \ldots, n)$ be functions so that both the sets
 $\left\{g_{1}, \ldots, g_{n}\right\}$ and $\left\{h_{1}, \ldots, h_{n}\right\}$ are linearly independent.
 The functions  $f, g_{i}, h_{i}\colon G\to \mathbb{C}\; (i=1, \ldots, n)$ form a \emph{non-degenerate} solution of equation \eqref{Eqlevi}
 if and only if
 \begin{enumerate}[(a)]
  \item there exist positive integers $k, n_{1}, \ldots, n_{k}$ with $n_{1}+\cdots+n_{k}=n$;
  \item there exist different nonzero complex exponentials $m_{1}, \ldots, m_{k}$;
  \item for all $j=1, \ldots, k$ there exists linearly independent sets of complex additive functions \[\left\{a_{j, 1}, \ldots,  a_{j, n_{j}-1}\right\};\]
  \item there exist polynomials $P_{j}, Q_{i, j}, R_{i, j}\colon \mathbb{C}^{n_{j}-1}\to \mathbb{C}$ for all $i=1, \ldots, n; j=1, \ldots, k$ in
  $n_{j}-1$ complex variables and of degree at most $n_{j}-1$;
 \end{enumerate}
 so that we have
 \[
  f(x)= \sum_{j=1}^{k}P_{j}\left(a_{j, 1}(x), \ldots, a_{j, n_{j}-1}(x)\right)m_{j}(x)
 \]
 \[
  g_{i}(x)= \sum_{j=1}^{k}Q_{i, j}\left(a_{j, 1}(x), \ldots, a_{j, n_{j}-1}(x)\right)m_{j}(x)
 \]
 and
 \[
  h_{i}(x)= \sum_{j=1}^{k}R_{i, j}\left(a_{j, 1}(x), \ldots, a_{j, n_{j}-1}(x)\right)m_{j}(x)
 \]
for all $i=1, \ldots, n$. Furthermore,
\[
 M\left(P; I_{1}, \ldots, I_{k}, J_{1}, \ldots, J_{k}\right)
 =
 N\left(Q; I_{1}, \ldots, I_{k}\right)N\left(R; J_{1}, \ldots, J_{k}\right)^{T}
\]
holds for any choice of the multi-indices $I_{j}, J_{j}\in
\mathbb{N}^{n_{j}-1}\, \left(j=1, \ldots, k\right)$, here $^{T}$
denotes the transpose of a matrix.
\end{thm}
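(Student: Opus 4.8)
The plan is to reduce \eqref{Eqlevi} to a statement about a finite-dimensional translation-invariant space, feed that into the structure theory of exponential polynomials on Abelian groups, and then recover the matrix identity $M=NN^{T}$ by substituting the resulting representations back into \eqref{Eqlevi} and comparing coefficients.

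First I would exploit non-degeneracy to get into the finite-dimensional regime. Since $\{h_{1},\ldots,h_{n}\}$ is linearly independent, one can choose $y_{1},\ldots,y_{n}\in G$ with $\det\bigl(h_{i}(y_{j})\bigr)\neq 0$; evaluating \eqref{Eqlevi} at $y=y_{1},\ldots,y_{n}$ and inverting this matrix expresses every $g_{i}$ as a linear combination of the translates $\tau_{y_{1}}f,\ldots,\tau_{y_{n}}f$. Symmetrically, linear independence of $\{g_{1},\ldots,g_{n}\}$ shows each $h_{i}$ is a linear combination of translates of $f$. Consequently $V:=\mathrm{lin}\{\tau_{y}f:y\in G\}$ is finite-dimensional with $\dim V\le n$, it is translation invariant by construction, and \eqref{Eqlevi} shows that $f$ and all the $g_{i}$, $h_{i}$ lie in $V$. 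By the (algebraic, topology-free) structure theorem for finite-dimensional translation-invariant spaces of complex functions on an Abelian group, $V$ is spanned by exponential polynomials and decomposes as a direct sum $V=\bigoplus_{j=1}^{k}m_{j}W_{j}$ over finitely many distinct exponentials $m_{j}$, where $W_{j}$ is a translation-invariant space of polynomials; setting $n_{j}:=\dim m_{j}W_{j}$ one gets $\sum_{j}n_{j}=n$, and a translation-invariant polynomial space of dimension $n_{j}$ consists of polynomials of degree at most $n_{j}-1$ (apply $\Delta$ repeatedly). Choosing a linearly independent tuple $(a_{j,1},\ldots,a_{j,n_{j}-1})$ of additive functions spanning, together with the constants, the additive functions occurring in $W_{j}$, one reads off the asserted forms of $f$, $g_{i}$, $h_{i}$ with polynomials $P_{j},Q_{i,j},R_{i,j}$ of degree $\le n_{j}-1$.

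Next I would substitute these representations into \eqref{Eqlevi}. On the left, $m_{j}(xy)=m_{j}(x)m_{j}(y)$ and $a_{j,\ell}(xy)=a_{j,\ell}(x)+a_{j,\ell}(y)$, so Taylor-expanding $P_{j}$ turns $f(xy)$ into a finite sum of terms $m_{j}(x)m_{j}(y)\cdot a_{j,\cdot}(x)^{\alpha}a_{j,\cdot}(y)^{\beta}$ with coefficients built from the partial derivatives of $P_{j}$ at $0$; the right-hand side $\sum_{i}g_{i}(x)h_{i}(y)$ is already a sum of terms $m_{j}(x)m_{j'}(y)\cdot a_{j,\cdot}(x)^{\alpha}a_{j',\cdot}(y)^{\beta}$ with coefficients bilinear in the derivatives of the $Q_{i,j}$ and $R_{i,j'}$. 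Because the $m_{j}$ are pairwise distinct exponentials and, for each fixed $j$, the $a_{j,\ell}$ are linearly independent, the two-variable family $(x,y)\mapsto m_{j}(x)m_{j'}(y)\,a_{j,\cdot}(x)^{\alpha}a_{j',\cdot}(y)^{\beta}$ is linearly independent on $G\times G$; this is the bivariate form of Lemma \ref{L_lin_dep} (subsumed, in the case $G=\K^{\times}$, by the discussion preceding Theorem \ref{thm_indep2}). Comparing coefficients then gives: all cross contributions with $j\neq j'$ must cancel, which is exactly the block-diagonal shape of $M(P;I_{1},\ldots,J_{k})$; and within the $j$-th block the coefficient of each pair of monomials equals the corresponding entry of $M_{j}(P;I_{j},J_{j})$ on one side and of the matrix product $N_{j}(Q;I_{j})N_{j}(R;J_{j})^{T}$ on the other. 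Assembled over all blocks this is precisely the identity $M(P;I_{1},\ldots,J_{k})=N(Q;I_{1},\ldots,I_{k})N(R;J_{1},\ldots,J_{k})^{T}$. The converse implication is a direct verification: given data obeying this identity, running the same expansion in reverse shows that $f$, $g_{i}$, $h_{i}$ solve \eqref{Eqlevi}, and non-degeneracy is built into the linear independence hypotheses on the tuples $(a_{j,\ell})$.

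I expect the principal obstacle to be purely organizational: matching the Taylor/multinomial coefficients of the $P_{j}$ and of the bilinear forms $\sum_{i}Q_{i,j}R_{i,j'}$ against the prescribed block-matrix bookkeeping, and checking that ``compare coefficients'' is genuinely equivalent to the single compact relation $M=NN^{T}$ rather than a larger, redundant system. The only non-routine analytic ingredient is the bivariate linear-independence statement that licenses the coefficient comparison, and this is available from Lemma \ref{L_lin_dep} together with the fact that distinct exponentials are linearly independent. A secondary point demanding care is that $G$ carries no topology here, so the ``finite-dimensional translation-invariant space is spanned by exponential polynomials'' input must be invoked in its purely algebraic form; this is classical but should be cited precisely.
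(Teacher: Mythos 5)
This statement is not proved in the paper at all: it is imported verbatim as Theorem 10.4 of Sz\'ekelyhidi's monograph \cite{Sze91}, so there is no in-paper argument to compare yours against. Judged on its own, your outline is the classical proof and is essentially correct: the reduction via $\det\bigl(h_{i}(y_{j})\bigr)\neq 0$ to a finite-dimensional translation-invariant space $V$ containing $f$, the $g_{i}$ and the $h_{i}$; the generalized-eigenspace decomposition $V=\bigoplus_{j}m_{j}W_{j}$ giving the exponential-polynomial representations with $\deg\leq n_{j}-1$; and the coefficient comparison after substituting back, which produces the off-diagonal vanishing $N_{j}(Q;I_{j})N_{j'}(R;J_{j'})^{T}=0$ for $j\neq j'$ together with the diagonal identities $M_{j}=N_{j}(Q;I_{j})N_{j}(R;J_{j})^{T}$, i.e.\ exactly $M=NN^{T}$.

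Two points deserve more care than you give them. First, the bivariate independence you invoke is not a consequence of Lemma \ref{L_lin_dep} plus Dedekind's independence of characters alone: you need that $\sum_{j}p_{j}m_{j}=0$ with \emph{polynomial} coefficients and distinct nonzero exponentials forces each $p_{j}m_{j}=0$ on a general Abelian group; this is the uniqueness of representation of exponential polynomials, available in the paper as Lemma \ref{lem_tils} (applied with $V=\{0\}$), and only after that does Lemma \ref{L_lin_dep} kill the coefficients of each $p_{j}$. The worry you might have imported from the discussion around Theorem \ref{thm_indep} --- that products $m_{1}^{s_{1}}\cdots m_{k}^{s_{k}}$ can collapse on general groups --- is irrelevant here, since no products of distinct exponentials arise. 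Second, in the converse direction, non-degeneracy (linear independence of $\{g_{i}\}$ and of $\{h_{i}\}$) is \emph{not} ``built into the linear independence of the tuples $(a_{j,\ell})$'': it is a standing hypothesis of the equivalence, and if one drops it the representations plus the matrix identity do not force it. Neither point breaks the argument, but both should be stated rather than waved at.
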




In \cite{Shu10} E.~Shulman used some techniques and results from
representation theory to investigate a multivariate extension of the
Levi-Civit\`{a} equation. In order to quote her results, we need the
following notions.

\begin{rem}
 The notion of exponential polynomials can be formulated not only in the framework of the theory of functional equations
 but also in that of representation theory. This point of view can be really useful in many cases.
 Let $G$ be a (not necessarily commutative) topological group and $\mathscr{C}(G)$ be the set of all
 continuous complex valued functions on $G$.
 A function $f\in \mathscr{C}(G)$ is called an exponential
 polynomial function (or a \emph{matrix function}) if there is a
 continuous representation $\pi$ of $G$ on a finite-dimensional topological space $X$
 such that
 \[
  f(g)= \langle \pi(g)x, y \rangle
  \qquad
  \left(g\in G\right),
 \]
where $x\in X$ and $y\in X^{\ast}$.

The minimal dimension of such representations is called the
\emph{degree} or the \emph{order} of the exponential polynomial.

Furthermore, $f\in \mathscr{C}(G)$ is an exponential polynomial of
degree less that $n$ if it is contained in an invariant subspace
$\mathscr{L}\subset \mathscr{C}(G)$ with
$\dim\left(\mathscr{L}\right)\leq n$.
\end{rem}

\begin{dfn}
Let $G$ be a group.
 We say that $f\colon G\to \mathbb{C}$ is a \emph{local exponential polynomial} if its restriction
 to any finitely generated subgroup $H\subset G$ is an exponential polynomial on $H$.

 A function $f\in \mathscr{C}(G)$ is an \emph{almost exponential polynomial} if for any
 finite subset $E$ of $G$, there is a finite-dimensional subspace $\mathscr{L}_{E}\subset \mathscr{C}(G)$,
 containing $f$ and invariant for all operators $\tau_{g}$ as $g$ runs through $E$,
 where
 \[
  \tau_{g}f(h)= f(hg)
  \qquad
  \left(h\in G\right).
 \]
\end{dfn}

\begin{rem}\label{rem_almost}
 It is an immediate consequence of the above definitions that any exponential polynomial is an
 almost exponential polynomial.
 Furthermore, if $f$ is an almost exponential polynomial, then it is a local exponential polynomial, too.
 Clearly, for finitely generated topological  groups all these three notions coincide.
 At the same time, in general these notions are different, even in case of discrete commutative groups,
 see \cite{Shu10}.
\end{rem}

\begin{dfn}
 Let $G$ be a group and $n\in \mathbb{N}, n\geq 2$.
 A function $F\colon G^{n}\to \mathbb{C}$ is said to be
 \emph{decomposable} if it can be written as a finite sum of products
 $F_{1}\cdots F_{k}$, where all $F_{i}$ depend on disjoint sets of variables.
\end{dfn}

\begin{rem}
 Without the loss of generality we can suppose that $k=2$ in the above definition, that is,
 decomposable functions are those mappings that can be written in the form
 \[
  F(x_{1}, \ldots, x_{n})= \sum_{E}\sum_{j}A_{j}^{E}B_{j}^{E}
 \]
where $E$ runs through all non-void proper subsets of $\left\{1,
\ldots, n\right\}$ and for each $E$ and $j$ the function $A_{j}^{E}$
depends only on variables $x_{i}$ with $i\in E$, while $B_{j}^{E}$
depends only on the variables $x_{i}$ with $i\notin E$.
\end{rem}

\begin{thm}\label{Thm_Shulman}
 Let $G$ be a group and $f\in \mathscr{C}(G)$ and
 $n\in \mathbb{N}, n\geq 2$ be fixed. If the mapping
 \[
  G^{n} \ni \left(x_{1}, \ldots, x_{n}\right)\longmapsto f\left(x_{1}\cdots x_{n}\right)
 \]
is decomposable then $f$ is an almost exponential polynomial
function.
\end{thm}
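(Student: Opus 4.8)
The natural approach is induction on $n$, the base case being the Levi-Civit\`a equation and the inductive step a ``peeling off one variable''. For $n=2$, decomposability of $(x,y)\mapsto f(xy)$ is exactly the Levi-Civit\`a equation $f(xy)=\sum_{i=1}^{r}g_i(x)h_i(y)$; then $\tau_y f=\sum_{i=1}^{r}h_i(y)g_i\in\mathrm{lin}\{g_1,\ldots,g_r\}$ for all $y$, so $\mathrm{lin}\{\tau_y f:\ y\in G\}$ is a finite-dimensional translation invariant subspace of $\mathscr C(G)$ containing $f=\tau_e f$; hence $f$ is an exponential polynomial, a fortiori an almost exponential polynomial.

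For $n\ge 3$, fix a finite set $E\subseteq G$ and restrict all data to the subgroup $\langle E\rangle$ (decomposability is inherited by restrictions, and on a finitely generated group ``almost exponential polynomial'' coincides with ``exponential polynomial'' by Remark \ref{rem_almost}); thus it suffices to produce a finite-dimensional $\{\tau_g:g\in E\}$-invariant subspace of $\mathscr C(G)$ containing $f$. Split the decomposition of $F(x_1,\ldots,x_n)=f(x_1\cdots x_n)$ according to the partition $\{1,\ldots,n-1\}\mid\{n\}$ of the variables,
\[
 F=S_0+D,\qquad S_0(x_1,\ldots,x_n)=\sum_{j=1}^{r}P_j(x_1,\ldots,x_{n-1})\,Q_j(x_n),
\]
where $S_0$ collects the products using exactly this partition — one may assume $\{P_j\}$ and $\{Q_j\}$ linearly independent — and $D$ collects the remaining products. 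Two facts drive the step. \emph{(i)} For every fixed $c\in G$ the function $(x_1,\ldots,x_{n-1})\mapsto D(x_1,\ldots,x_{n-1},c)$ is decomposable: each remaining product uses a partition $A\mid A^{c}$ with both $A\cap\{1,\ldots,n-1\}$ and $A^{c}\cap\{1,\ldots,n-1\}$ nonempty, so fixing $x_n=c$ leaves a product of functions living on two nonempty disjoint blocks of $x_1,\ldots,x_{n-1}$. \emph{(ii)} Since $F$ and $D$ are decomposable, $S_0=F-D$ is decomposable as well; combined with the fact that $F$ arises from the single function $f$ through the group multiplication, this should force each $P_j$ to be decomposable in $n-1$ variables and each $Q_j$ to be an exponential polynomial. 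The way I would establish (ii) is to choose parameters making the matrices $[Q_j(c_i)]$ and $[P_j(\vec a^{(i)})]$ invertible, thereby expressing each $P_j$ as a finite linear combination of ``slices'' of $f$ and of the decomposable function $D$, and each $Q_j$ as a finite linear combination of translates of $f$, and then to invoke the $(n-1)$-variable case together with the structure theorem for Levi-Civit\`a solutions (Theorem \ref{Szekely}).

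Granting (ii), specialise $x_n=e$: then
\[
 f(x_1\cdots x_{n-1})=\sum_{j=1}^{r}Q_j(e)\,P_j(x_1,\ldots,x_{n-1})+D(x_1,\ldots,x_{n-1},e)
\]
is a sum of decomposable functions of $x_1,\ldots,x_{n-1}$, hence is itself decomposable; the inductive hypothesis now shows $f$ is an almost exponential polynomial, and assembling the finite-dimensional invariant subspace for the prescribed $E$ from the translates $\tau_w f$ ($w$ a word in $E$) completes the step.

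The base case and fact (i) are routine bookkeeping; the heart of the matter is fact (ii). A priori the coefficients $P_j$ are \emph{arbitrary} functions of $n-1$ variables, and it is only the interaction of ``$S_0$ is decomposable'' with ``$F$ factors as $f$ composed with the group multiplication'' that makes the $P_j$ and $Q_j$ tame; turning this into a clean argument — in particular obtaining bounds uniform in the auxiliary parameters $c_i,\vec a^{(i)}$ so that the complexity does not grow uncontrollably — is the genuine obstacle. A further, more technical subtlety is that the conclusion is only ``almost exponential polynomial'': the induction must be organised so as to deliver, for each prescribed finite set $E$, its own finite-dimensional $\tau_g$-invariant ($g\in E$) subspace, rather than one global translation invariant subspace for $f$.
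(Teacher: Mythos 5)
This theorem is not proved in the paper at all: it is quoted verbatim from E.~Shulman's article \cite{Shu10} (see the sentence introducing it, ``In order to quote her results, we need the following notions''), and Shulman's own argument is representation--theoretic, built on a characterization of almost exponential polynomials via finite--dimensionality of spaces spanned by iterated translates, rather than the direct induction you attempt. So there is no in-paper proof to match yours against; your proposal has to stand on its own, and as written it does not.

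Your base case $n=2$ and your fact \emph{(i)} are correct, but the entire difficulty of the theorem is concentrated in your fact \emph{(ii)}, which you assert (``this should force each $P_j$ to be decomposable\dots'') rather than prove, and the mechanism you sketch for it is circular. Inverting the matrix $\left[Q_j(c_i)\right]$ expresses each $P_j$ as a linear combination of the slices $(x_1,\ldots,x_{n-1})\mapsto F(x_1,\ldots,x_{n-1},c_i)-D(x_1,\ldots,x_{n-1},c_i)$. The $D$--slices are decomposable by \emph{(i)}, but $F(\cdot,\ldots,\cdot,c_i)=(\tau_{c_i}f)\circ\mu_{n-1}$, and its decomposability in $n-1$ variables is exactly equivalent to the decomposability of $\sum_j Q_j(c_i)P_j$ --- i.e.\ to the statement you are trying to establish. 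Knowing only that $S_0=F-D$ is decomposable with respect to the single partition $\{1,\ldots,n-1\}\mid\{n\}$ says nothing about the internal structure of the coefficient functions $P_j$, which a priori are arbitrary functions of $n-1$ variables. The same problem afflicts the claim that each $Q_j$ is an exponential polynomial: the dual slicing writes $Q_j$ as a combination of left translates of $f$ plus slices of $D$ in the last variable, and the latter are arbitrary functions of $x_n$, so no finite--dimensional translation--invariant subspace is produced. Consequently the specialization $x_n=e$ does not yield a decomposable $(n-1)$--variable representation of $f\circ\mu_{n-1}$, and the induction does not close. To repair the argument you would need a genuinely new input --- in Shulman's paper this is supplied by a rank/minimality argument on the decomposition combined with the translation structure of $F=f\circ\mu_n$ --- and your honest acknowledgement that this is ``the genuine obstacle'' does not substitute for carrying it out.
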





\subsection{Derivations and differential operators}

Similarly as before, $\mathbb{K}$ denotes a field and
$\mathbb{K}^{\times}$ stands for the multiplicative subgroup of
$\mathbb{K}$.

In this subsection we introduce differential operators acting on
fields which have important role in our investigation.

\begin{dfn}
A \emph{derivation} on $\mathbb{K}$ is a map $d:\mathbb{K}\to
\mathbb{K}$ such that equations
\begin{equation}\label{eder1}
d(x+y)=d(x)+d(y) \qquad \text{and} \qquad d(xy)=d(x)y+x d(y)
\end{equation}
are fulfilled for every $x,y\in \mathbb{K}$.

We say that the map $D:\mathbb{K}\to \mathbb{C}$ is a
\emph{differential operator of order $m$} if $D$ can be represented
as
\begin{equation}\label{eqdiff}
D=\sum_{j=1}^M c_{j} d_{j,1} \circ \ldots \circ  d_{j,k_j},
\end{equation}
where  $c_{j}\in \mathbb{C}$ and $d_{i,j}$  are derivations on
$\mathbb{K}$ and $k_j\le m$ which fulfilled as equality for some
$j$. If $k=0$ then we interpret $d_1 \circ  \ldots \circ  d_k$ as
the identity function $id$ on $\mathbb{K}$.
\end{dfn}



\begin{rem}\label{rem_diff}
Since the compositions $d_1 \circ  \ldots \circ  d_k$ span a linear
space over $\C$, without loss of generality we may assume that each
term of \eqref{eqdiff} are linearly independent. Equivalently we may
fix a basis $\mathscr{B}$ of compositions. We also fix that the
identity map is $id$ in $\mathscr{B}$. We note that a differential
operator of order $n$ contains a composition of length $n$.
\end{rem}

If a function $m$ is additive on $\mathbb{K}$ and exponential on
$\mathbb{K}^{\times}$, then $m$ is clearly a field homomorphism. In
our case this can be extended to $\C$ as an automorphism of $\C$ by
\cite[Theorem 14.5.1]{Kuc09}. Now we concentrate on the subfields of
$\mathbb{C}$ that has finite transcendence degree over $\mathbb{Q}$.

\begin{lem}\label{lem_hom}
Let $\mathbb{K\subset \mathbb{C}}$ be field of finite transcendence
degree  and $\varphi: \mathbb{K}\to \mathbb{C}$ an injective
homomorphism. Then there exists an automorphism $\psi$ of
$\mathbb{C}$ such that $\psi|_K = \varphi$.
\end{lem}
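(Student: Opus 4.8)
The plan is to construct $\psi$ in two stages — first across the algebraic elements, then across a transcendence basis — using at each stage the classical theorem that an isomorphism between two fields extends to an isomorphism between any chosen algebraic closures of them.

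\textbf{Step 1 (algebraic part).} Let $\mathbb{K}_{0}$ be the relative algebraic closure of $\mathbb{K}$ in $\mathbb{C}$. Since $\mathbb{C}$ is algebraically closed, $\mathbb{K}_{0}$ is an algebraic closure of $\mathbb{K}$. Put $\mathbb{L}=\varphi(\mathbb{K})$ and let $\mathbb{L}_{0}$ be the relative algebraic closure of $\mathbb{L}$ in $\mathbb{C}$, which is an algebraic closure of $\mathbb{L}$. By injectivity $\varphi$ is an isomorphism of $\mathbb{K}$ onto $\mathbb{L}$, hence it extends to an isomorphism $\varphi_{0}\colon \mathbb{K}_{0}\to \mathbb{L}_{0}$.

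\textbf{Step 2 (transcendental part).} A transcendence basis of $\mathbb{C}$ over $\mathbb{Q}$ has cardinality $2^{\aleph_{0}}$. Since $\mathbb{K}$, and therefore also $\mathbb{K}_{0}$, has finite transcendence degree over $\mathbb{Q}$, additivity of transcendence degree gives $\mathrm{trdeg}(\mathbb{C}/\mathbb{K}_{0})=2^{\aleph_{0}}$, and likewise $\mathrm{trdeg}(\mathbb{C}/\mathbb{L}_{0})=2^{\aleph_{0}}$. Fix a transcendence basis $B$ of $\mathbb{C}$ over $\mathbb{K}_{0}$ and a transcendence basis $B'$ of $\mathbb{C}$ over $\mathbb{L}_{0}$; then $|B|=|B'|=2^{\aleph_{0}}$, so there is a bijection $\beta\colon B\to B'$. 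Combining $\varphi_{0}$ with $\beta$ induces an isomorphism of the purely transcendental extensions $\mathbb{K}_{0}(B)\to \mathbb{L}_{0}(B')$.

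\textbf{Step 3 (conclusion).} As $B$ is a transcendence basis of $\mathbb{C}$ over $\mathbb{K}_{0}$, the field $\mathbb{C}$ is algebraic over $\mathbb{K}_{0}(B)$, and being algebraically closed it is an algebraic closure of $\mathbb{K}_{0}(B)$; similarly $\mathbb{C}$ is an algebraic closure of $\mathbb{L}_{0}(B')$. Applying the extension theorem once more, the isomorphism $\mathbb{K}_{0}(B)\to \mathbb{L}_{0}(B')$ extends to an isomorphism $\psi\colon \mathbb{C}\to \mathbb{C}$, i.e.\ an automorphism of $\mathbb{C}$. Since $\psi$ restricts to $\varphi_{0}$ on $\mathbb{K}_{0}$ and $\varphi_{0}$ restricts to $\varphi$ on $\mathbb{K}$, we get $\psi|_{\mathbb{K}}=\varphi$. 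The only genuinely delicate point is the cardinality bookkeeping in Step 2 — one must ensure that removing a transcendence basis of $\mathbb{K}_{0}$ does not drop the transcendence degree of $\mathbb{C}$ below $2^{\aleph_{0}}$, which is exactly where finiteness (countability would already do) of $\mathrm{trdeg}(\mathbb{K}/\mathbb{Q})$ enters; the rest is a routine use of the isomorphism-extension theorem for algebraically closed fields.
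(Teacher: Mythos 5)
Your proof is correct. The paper does not prove this lemma itself but simply cites \cite[Theorem 14.5.1]{Kuc09}, and your two-stage argument (extend over the relative algebraic closures, match transcendence bases of $\mathbb{C}$ over $\mathbb{K}_{0}$ and over $\varphi(\mathbb{K})_{0}$ via the cardinality count $2^{\aleph_{0}}$, then apply the isomorphism-extension theorem once more) is exactly the standard proof of that cited result, with the role of the finite transcendence degree correctly identified.
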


Further relations are presented between the exponential  polynomials
defined on $\mathbb{K}^{\times}$ and differential operators on
$\mathbb{K}$. The connection was first realized in \cite{KL1} and
the connection between the degrees and orders was settled in
\cite{KL2}. Clearly every differential operator is additive on
$\mathbb{K}$ and this additional property is a substantial part of
the following statement.

\begin{thm}\label{thm_deriv}
Suppose that the transcendence degree of the field $\mathbb{K}$ over
$\mathbb{Q}$ is finite.  Let $f : \mathbb{K}\to \mathbb{C}$ be
additive, and let $m$ be an exponential on $K^{\times}$. Let
$\varphi$ be an extension of $m$ to $\mathbb{C}$ as an automorphism
of $\mathbb{C}$. Then the following
are equivalent.
\begin{enumerate}[(i)]
\item $f = p \cdot m$ on $\mathbb{K}^{\times}$, where $p$ is a local polynomial on $\mathbb{K}^{\times}$.
\item  $f = p \cdot m$ on $\mathbb{K}^{\times}$, where $p$ is an almost polynomial on $\mathbb{K}^{\times}$.
\item $f = p \cdot m$ on $\mathbb{K}^{\times}$, where $p$ is a polynomial on $\mathbb{K}^{\times}$.
\item There exists a unique differential operator $D$ on $\mathbb{K}$ such that
$f = \varphi \circ D$ on $\mathbb{K}$.
\end{enumerate}
In this case, $p$ is a polynomial of degree $n$ if and only if $D$
is a differential operator of order $n$.
\end{thm}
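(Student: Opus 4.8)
The plan is to prove Theorem~\ref{thm_deriv} by establishing the cycle of implications $(iii)\Rightarrow(ii)\Rightarrow(i)\Rightarrow(iv)\Rightarrow(iii)$, together with the final sentence relating degree and order. The implications $(iii)\Rightarrow(ii)$ and $(ii)\Rightarrow(i)$ are essentially Remark~\ref{rem_almost}: a polynomial on $\mathbb{K}^{\times}$ is in particular an almost polynomial, and an almost polynomial is a local polynomial, so these directions require no work beyond unwinding definitions and observing that multiplication by the fixed exponential $m$ preserves each class. The substantive content is therefore concentrated in $(i)\Rightarrow(iv)$ and in the converse $(iv)\Rightarrow(iii)$.

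For $(i)\Rightarrow(iv)$, write $f = p\cdot m$ with $p$ a local polynomial on $\mathbb{K}^{\times}$, and set $D := \varphi^{-1}\circ f$, where $\varphi$ is the fixed automorphism of $\mathbb{C}$ extending $m$; note $\varphi^{-1}\circ m = \mathrm{id}$ on $\mathbb{K}^{\times}$, so $D = \varphi^{-1}\circ p$ on $\mathbb{K}^{\times}$ and one must show $D$ is a differential operator, i.e.\ a finite $\mathbb{C}$-combination of compositions of derivations on $\mathbb{K}$, additive on $\mathbb{K}$. The strategy is induction on the degree $n$ of the local polynomial $p$ (which is well defined because the transcendence degree is finite: on any finitely generated subfield the polynomial degree is controlled and, with finite transcendence degree, a global bound holds). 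For $n=0$, $p$ is constant, $f = c\cdot m$, and $D = c\cdot\mathrm{id}$. For the inductive step, one analyzes the behaviour of $f/m$ under the multiplicative structure: the key identity is that $p(xy) - p(x) - p(y)$ (or more precisely the relation coming from $f$ additive and $m$ multiplicative, namely $f(xy) = f(x)m(y) + m(x)f(y)$) forces the function $(x,y)\mapsto \varphi^{-1}(f(xy))$ to satisfy a Leibniz-type relation modulo lower-degree terms. Concretely, the top homogeneous part of $p$ gives rise, via the multiplicativity, to a symmetric multiadditive object on $\mathbb{K}^{\times}$ whose diagonal is (a piece of) $p$; one shows this part is realized by a sum of compositions $d_1\circ\cdots\circ d_n$ of derivations, using that the logarithmic derivative $x\mapsto \varphi^{-1}(f(x))/x$-type manipulations produce genuine derivations (additive, and Leibniz by direct computation). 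Subtracting the resulting differential operator of order $n$ reduces $p$ to degree $<n$, and induction finishes. Uniqueness of $D$ follows from Theorem~\ref{thm_indep2} (algebraic independence): if $\varphi\circ D_1 = \varphi\circ D_2$ on $\mathbb{K}$ then $D_1 = D_2$ as functions $\mathbb{K}\to\mathbb{C}$, and distinct reduced differential operators (in the basis $\mathscr{B}$ of Remark~\ref{rem_diff}) give distinct functions because compositions of derivations, expressed through additive coordinate functions, are linearly independent.

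For $(iv)\Rightarrow(iii)$ one argues directly: if $f = \varphi\circ D$ with $D = \sum_j c_j\, d_{j,1}\circ\cdots\circ d_{j,k_j}$, then since $\varphi$ is multiplicative, $f = m\cdot(\varphi|_{\mathbb{K}}^{-1}\circ\cdots)$ — more carefully, one shows $\varphi^{-1}\circ f = D$ is a polynomial on $\mathbb{K}^{\times}$. This uses that each derivation $d$ on $\mathbb{K}$, viewed after transport by $\varphi$, becomes a function that is additive on $\mathbb{K}^{+}$ and satisfies a cocycle identity on $\mathbb{K}^{\times}$ making $d/\mathrm{id}$ (i.e.\ $x\mapsto d(x)/x$) additive on $\mathbb{K}^{\times}$; hence $d$ itself is $(\text{additive on }\mathbb{K}^{\times})\cdot\mathrm{id}$, a polynomial of degree $1$ times the exponential $\mathrm{id}$. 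Compositions then yield polynomials of the expected degree times $\mathrm{id}$ on $\mathbb{K}^{\times}$, by an induction using the product rule and the fact (Lemma~\ref{L_lin_dep}) that products of additive functions are the monomials generating the polynomial algebra. Multiplying back by $m$ gives $f = p\cdot m$ with $p$ a polynomial. Tracking degrees through this induction, and comparing with the reduction in $(i)\Rightarrow(iv)$, yields the final equivalence: $p$ has degree $n$ exactly when the top-length composition in the reduced form of $D$ has length $n$, i.e.\ $D$ has order $n$.

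The main obstacle I expect is the core of $(i)\Rightarrow(iv)$: converting the ``local/almost polynomial'' information about $p$ into an actual finite sum of compositions of \emph{global} derivations on $\mathbb{K}$. The difficulty has two faces. First, locality must be upgraded to globality, which is exactly where finite transcendence degree is indispensable — one fixes a transcendence basis, controls $p$ on the finitely generated pieces, and must patch these consistently; the patching is delicate because derivations on a subfield need not extend uniquely, and one must use Lemma~\ref{lem_hom} (or the cited extension of automorphisms) to keep the extension $\varphi$ coherent. Second, extracting honest derivations from the homogeneous components of $p$ requires showing that the biadditive ``error terms'' produced by multiplicativity are symmetric and satisfy the Leibniz rule after the transport by $\varphi^{-1}$; verifying the Leibniz identity (as opposed to mere additivity) is the crux and relies on carefully iterating the relation $f(xy)=f(x)m(y)+m(x)f(y)$ and using the polarization formula (Theorem~\ref{Thm_polarization}) to isolate the multiadditive traces. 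Once these two points are in place, the induction on degree is mechanical.
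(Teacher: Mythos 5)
Your proposal takes a fundamentally different route from the paper: the paper's entire proof is a citation --- the equivalence of $(i)$, $(iii)$, $(iv)$ is taken from \cite[Theorem 4.2]{KL1}, the insertion of $(ii)$ from Remark~\ref{rem_almost}, and the degree/order correspondence from \cite[Corollary 1.1]{KL2} --- whereas you attempt to reprove these external results from scratch. The parts of your argument that are actually carried out are sound: $(iii)\Rightarrow(ii)\Rightarrow(i)$ is indeed just Remark~\ref{rem_almost}, and your mechanism for $(iv)\Rightarrow(iii)$ is the right one (for a derivation $d$ the logarithmic derivative $x\mapsto d(x)/x$ is additive on $\mathbb{K}^{\times}$, an induction on the length of compositions shows $(d_1\circ\cdots\circ d_k)(x)/x$ is a polynomial of degree $k$ on $\mathbb{K}^{\times}$, and applying the automorphism $\varphi$ transports this to $f=p\cdot m$).

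However, there is a genuine gap at exactly the place you flag as the ``main obstacle'': the implication $(i)\Rightarrow(iv)$ is described but not proved. Two claims are left unestablished. First, the passage from a \emph{local} polynomial to a \emph{global} polynomial of bounded degree is asserted (``with finite transcendence degree, a global bound holds'') without argument; boundedness of the degrees over all finitely generated subgroups and the coherence of the representations across them is a substantive step, not bookkeeping. Second, and more seriously, the heart of the induction --- that the top homogeneous component of $q=\varphi^{-1}\circ p$, a sum of products $\ell_{1}(x)\cdots\ell_{n}(x)$ of functions additive on $\mathbb{K}^{\times}$, is realized by logarithmic derivatives of genuine derivations on $\mathbb{K}$, so that it can be matched by compositions $d_{1}\circ\cdots\circ d_{n}$ and subtracted --- is stated as ``one shows this part is realized by a sum of compositions'' with no mechanism supplied. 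The additivity of $f$ on $\mathbb{K}^{+}$ couples homogeneous components of different degrees (already for $n=2$, the composition $d_1\circ d_2$ contributes both $\ell_1(x)\ell_2(x)x$ and the degree-one term $d_1(\ell_2(x))x$), so the top part cannot be isolated and dispatched independently without a real argument; moreover the candidate derivation $x\mapsto\ell(x)x$ is a priori only $\mathbb{K}\to\mathbb{C}$, while the paper's definition of a differential operator requires the inner derivations to map $\mathbb{K}\to\mathbb{K}$. This is precisely the content of \cite[Theorem 4.2]{KL1}, which the paper invokes rather than proves, and your sketch does not replace it. The uniqueness of $D$ and the degree--order correspondence likewise rest on the algebraic independence of the compositions in the basis $\mathscr{B}$, which you assert via Theorem~\ref{thm_indep} but do not verify for compositions of derivations.
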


\begin{proof}
 The equivalence of $(i), (iii)$ and $(iv)$ follows from \cite[Theorem 4.2]{KL1}. Remark \ref{rem_almost} implies the equivalence of $(ii)$ with the others. The last part of the statement follows from \cite[Corollary 1.1.]{KL2}.
\end{proof}



\section{Preparatory statements}

At first glance equation \eqref{Eq1.3} itself seem not really
restrictive for the functions $f_{1}, \ldots, f_{n}$. At the same
time, our results show that these additive functions are in fact
very special, i.e., they are linear combinations of field
homomorphisms from the field $\mathbb{K}$ to $\mathbb{C}$. This is
caused by the additivity assumption on the involved functions, and
this is the property that can effectively be combined with the
theory of (exponential) polynomials on semigroups. More precisely,
with the aid of the following lemma, we will be able to broaden the
number of the variables appearing in equation \eqref{Eq1.3} from one
to $N$.

\begin{lem}\label{L1}
 Let $n\in \mathbb{N}$ be arbitrary, $\mathbb{K}$ a field, $f_{1}, \ldots, f_{n}\colon \mathbb{K}\to \mathbb{C}$ additive functions. Suppose further that we are given natural
numbers $p_{1}, \ldots, p_{n}, q_{1}, \ldots, q_{n}$ such that they
fulfill condition $(\mathscr{C})$. If
\begin{equation}\label{Eq1.4}
 \sum_{i=1}^{n}f^{q_{i}}_{i}\left(x^{p_{i}}\right)=0
\end{equation}
 is satisfied for any $x\in \mathbb{K}$, then we also have
 \begin{equation}
  \sum_{i=1}^{n}\dfrac{1}{N!}\sum_{\sigma\in \mathscr{S}_{N}}f_{i}\left(x_{\sigma(1)}\cdots x_{\sigma(p_{i})}\right)
  \cdots f_{i}\left(x_{\sigma(N-p_{i}+1)}\cdots x_{\sigma(N)}\right)=0
 \end{equation}
for any $x_{1}, \ldots, x_{N}\in \mathbb{K}$, here $\mathscr{S}_{N}$
denotes the symmetric group of order $N$.
\end{lem}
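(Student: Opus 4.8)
The plan is to exploit the additivity of each $f_i$ to turn the single-variable equation \eqref{Eq1.4} into a multi-variable identity, and then to symmetrize. First I would fix $x_1,\dots,x_N\in\mathbb{K}$ and consider the substitution $x\mapsto t_1x_1+\cdots+t_Nx_N$ with rational (or integer) parameters $t_1,\dots,t_N$. Because $f_i$ is additive, $f_i\bigl((\sum_j t_jx_j)^{p_i}\bigr)$ can be expanded: first expand the power $(\sum_j t_jx_j)^{p_i}$ as a sum of monomials in the $x_j$ with integer multinomial coefficients and the corresponding products $t_{j_1}\cdots t_{j_{p_i}}$, then pull $f_i$ through the additive sum and through the integer coefficients. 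Raising to the $q_i$-th power and summing over $i$, the left-hand side of \eqref{Eq1.4} becomes, for each fixed tuple $(x_1,\dots,x_N)$, a polynomial in $t_1,\dots,t_N$ with complex coefficients; note every monomial $t^\alpha$ occurring has total degree $|\alpha| = p_i q_i = N$, so in fact it is a homogeneous polynomial of degree $N$. Since this polynomial vanishes for all rational $t_j$ (here we use $\mathbb{Q}\subset\mathbb{K}$, which holds as $\mathrm{char}\,\mathbb{K}=0$), and $\mathbb{Q}$ is infinite, all its coefficients vanish.

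Next I would identify the coefficient of the specific monomial $t_1t_2\cdots t_N$ (all exponents equal to $1$). The hard part — and the main bookkeeping obstacle — is to check that extracting this coefficient from $\sum_i\bigl(\sum_{|\beta|=p_i}\binom{p_i}{\beta}f_i(x^\beta)\,t^\beta\bigr)^{q_i}$ produces exactly $N!$ times the claimed symmetrized expression $\sum_{i=1}^n\frac{1}{N!}\sum_{\sigma\in\mathscr{S}_N}f_i(x_{\sigma(1)}\cdots x_{\sigma(p_i)})\cdots f_i(x_{\sigma(N-p_i+1)}\cdots x_{\sigma(N)})$. Concretely, when one multiplies out the $q_i$-th power, a term contributing to $t_1\cdots t_N$ arises from choosing an ordered partition of $\{1,\dots,N\}$ into $q_i$ blocks each of size $p_i$, and the associated coefficient is a product of $q_i$ factors of the form $f_i$ applied to the product of the $x_j$'s over one block; summing over all such ordered partitions is precisely the same as averaging over $\sigma\in\mathscr{S}_N$ and grouping the arguments into consecutive blocks of length $p_i$, up to the normalizing factor $1/N!$ and the multinomial coefficients $\binom{p_i}{\beta}$ which are exactly absorbed. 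Care is needed because $\mathscr{S}_N$ overcounts each ordered partition by $(p_i!)^{q_i}q_i!$, and one must verify that this matches the product of multinomial coefficients times the number of orderings of the blocks; since $p_iq_i=N$ this identity $(p_i!)^{q_i}q_i!\cdot(\text{number of unordered partitions})=N!$ is the combinatorial core.

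An alternative, slightly cleaner route avoids explicit multinomial coefficients: apply the difference (polarization) operator $\Delta_{x_1,\dots,x_N}$ directly to the function $x\mapsto\sum_i f_i^{q_i}(x^{p_i})$, which is identically zero, hence its $N$-th mixed difference is zero. One then computes $\Delta_{x_1,\dots,x_N}\bigl[f_i^{q_i}(x^{p_i})\bigr]$ evaluated at $x=0$: since $x\mapsto f_i(x^{p_i})$ is the diagonalization of a symmetric $p_i$-additive map $(x_1,\dots,x_{p_i})\mapsto\frac{1}{p_i!}\sum_{\tau}f_i(x_{\tau(1)}\cdots x_{\tau(p_i)})$ by the Polarization formula (Theorem \ref{Thm_polarization}), and $f_i^{q_i}(x^{p_i})$ is thus a "polynomial" of degree $N$ in the sense of the earlier section, its $N$-th mixed difference is $N!$ times the full polarization, which unwinds to exactly the symmetrized sum in the statement. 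I would present the first (substitution) approach as the main argument since it is the most elementary and self-contained, and expect the only real work to be the careful verification of the combinatorial coefficient matching described above; everything else (additivity manipulations, vanishing of a polynomial with infinitely many roots) is routine.
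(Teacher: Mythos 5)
Your argument is correct, but it takes a more hands-on route than the paper. The paper's proof is very short: it defines $F(x_1,\dots,x_N)$ to be precisely the symmetrized sum in the conclusion, observes that $F$ is symmetric and $N$-additive (by the additivity of the $f_i$ and the fact that each block $x_{\sigma(j)}\cdots x_{\sigma(j+p_i-1)}$ depends multilinearly on its entries) and that its trace $F(x,\dots,x)$ equals the left-hand side of \eqref{Eq1.4}, hence vanishes; it then invokes the polarization formula (Theorem \ref{Thm_polarization} together with Lemma \ref{mainfact}, using that multiplication by $N!$ is injective on $\mathbb{C}$) to conclude $F\equiv 0$. Your main argument --- substituting $x=\sum_j t_jx_j$, expanding by additivity and $\mathbb{Q}$-homogeneity, and extracting the coefficient of $t_1\cdots t_N$ from a polynomial identity that holds for all rational $t_j$ --- is in effect a self-contained re-proof of exactly that polarization step, while your ``alternative route'' via the mixed difference operator is essentially the paper's actual proof. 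Both are valid: the paper's version buys brevity because the polarization machinery is already set up in Section 2, whereas yours makes the combinatorics explicit, and the count does close up as you claim. Each ordered partition of $\{1,\dots,N\}$ into $q_i$ blocks of size $p_i$ is hit by exactly $(p_i!)^{q_i}$ permutations $\sigma\in\mathscr{S}_N$, and each multinomial coefficient $\binom{p_i}{\beta}$ for a $0$--$1$ multi-index $\beta$ with $|\beta|=p_i$ equals $p_i!$, so the coefficient of $t_1\cdots t_N$ in $f_i^{q_i}(x^{p_i})$ is exactly $\sum_{\sigma\in\mathscr{S}_N}f_i\left(x_{\sigma(1)}\cdots x_{\sigma(p_i)}\right)\cdots f_i\left(x_{\sigma(N-p_i+1)}\cdots x_{\sigma(N)}\right)$. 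One small slip in your bookkeeping remark: $\mathscr{S}_N$ overcounts each \emph{unordered} partition by $(p_i!)^{q_i}q_i!$; the fibre over an \emph{ordered} partition has size only $(p_i!)^{q_i}$. This does not affect the conclusion.
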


\begin{proof}
 Suppose that $n\in \mathbb{N}$, $\mathbb{K}$ is a field, $f_{1}, \ldots, f_{n}\colon \mathbb{K}\to \mathbb{C}$
are additive functions and define the function $F\colon
\mathbb{K}^{N}\to \mathbb{C}$ through
\[
 F(x_{1}, \ldots, x_{N})=
 \sum_{i=1}^{n}\dfrac{1}{N!}\sum_{\sigma\in \mathscr{S}_{N}}f_{i}\left(x_{\sigma(1)}\cdots x_{\sigma(p_{i})}\right)
  \cdots f_{i}\left(x_{\sigma(N-p_{i}+1)}\cdots x_{\sigma(N)}\right)
  \quad
  \left(x_{1}, \ldots, x_{N}\in \mathbb{K}\right).
\]
It is clear that $F$ is a symmetric function, moreover, due to the
additivity of the functions $f_{1}, \ldots, f_{n}$, it is
$N$-additive. Furthermore, in view of equation \eqref{Eq1.4},
\[
 F(x, \ldots, x)=  \sum_{i=1}^{n}f^{q_{i}}_{i}\left(x^{p_{i}}\right)=0
 \qquad
 \left(x\in \mathbb{K}\right).
\]
Therefore, the polarization formula immediately yields that the
mapping $F$ is identically zero on $\mathbb{K}^{N}$.
\end{proof}

\subsection*{Equation \eqref{Eq1.3} with two unknown functions }

At first we will investigate the case when $n=2$. This case was also
studied by F.~Halter--Koch and L.~Reich in a special situation (when
$n=p$ and $m=q$) in \cite{Hal00, HalRei00, HalRei01}.

\begin{prop}\label{Prop1}
 Let $n, m, p, q\in \mathbb{N}$ be arbitrarily fixed so that $n\cdot m= p\cdot q>1$ and $m\neq p$. Let $\mathbb{K}$ be a field and suppose that for additive functions
 $f, g\colon \mathbb{K}\to \mathbb{C}$ the  functional equation
 \begin{equation}\label{Eq1.5}
  f^{m}\left(x^{n}\right)=g^{p}\left(x^{q}\right)
  \qquad
  \left(x\in \mathbb{K}\right)
 \end{equation}
is fulfilled. Then, and only then there exists a homomorphism
$\varphi\colon \mathbb{K}\to \mathbb{C}$ so that
\[
 f(x)=f(1)\cdot \varphi(x)
 \quad
 \text{and}
 \quad
 g(x)=g(1)\cdot \varphi(x)
\]
furthermore, we also have $f(1)^{m}-g(1)^{p}=0$.
\end{prop}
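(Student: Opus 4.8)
\emph{Proof strategy.} The ``if'' part is a one-line computation; for the ``only if'' part the plan is to pass from one to $N=nm=pq$ variables via Lemma~\ref{L1} and then extract $f$ and $g$ from a handful of substitutions, separating the cases according to whether $f(1)$ --- equivalently $g(1)$ --- vanishes. Concretely: if $f=f(1)\varphi$, $g=g(1)\varphi$ for a homomorphism $\varphi\colon\mathbb{K}\to\mathbb{C}$ with $f(1)^{m}=g(1)^{p}$, then $f^{m}(x^{n})=f(1)^{m}\varphi(x)^{N}=g(1)^{p}\varphi(x)^{N}=g^{p}(x^{q})$ by multiplicativity of $\varphi$, so \eqref{Eq1.5} holds. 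For the converse, observe that $m\neq p$ and $nm=pq$ force $n\neq q$, so condition $(\mathscr{C})$ is satisfied for $f^{m}(x^{n})+(\zeta g)^{p}(x^{q})=0$, where $\zeta\in\mathbb{C}$, $\zeta^{p}=-1$; hence, by Lemma~\ref{L1} (equivalently, by the polarization formula, Lemma~\ref{mainfact}, multiplication by $N!$ being injective on $(\mathbb{C},+)$), the symmetric $N$-additive functions with diagonalizations $f^{m}(x^{n})$ and $g^{p}(x^{q})$ coincide, i.e.\ $\Phi=\Psi$ on $\mathbb{K}^{N}$, where
\[
 \Phi(x_{1},\ldots,x_{N})=\frac{1}{N!}\sum_{\sigma\in\mathscr{S}_{N}}\prod_{j=1}^{m}f\bigl(x_{\sigma((j-1)n+1)}\cdots x_{\sigma(jn)}\bigr),
\]
\[
 \Psi(x_{1},\ldots,x_{N})=\frac{1}{N!}\sum_{\sigma\in\mathscr{S}_{N}}\prod_{j=1}^{p}g\bigl(x_{\sigma((j-1)q+1)}\cdots x_{\sigma(jq)}\bigr).
\]
Taking $x_{1}=\cdots=x_{N}=1$ already gives $f(1)^{m}=g(1)^{p}$, so $f(1)=0$ exactly when $g(1)=0$.

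\emph{Non-degenerate case, $f(1)\neq0\neq g(1)$.} Using the symmetry of the statement under $(f,n,m)\leftrightarrow(g,q,p)$ I may assume $m\geq2$. Substituting $(x,1,\ldots,1)$ into $\Phi=\Psi$ gives $f(x)f(1)^{m-1}=g(x)g(1)^{p-1}$, hence $g=\tfrac{g(1)}{f(1)}f$ (invoking $f(1)^{m}=g(1)^{p}$). Substituting $(x,y,1,\ldots,1)$ and counting how the two distinguished positions distribute among the blocks gives
\[
 \tfrac{n-1}{N-1}f(xy)f(1)^{m-1}+\tfrac{n(m-1)}{N-1}f(x)f(y)f(1)^{m-2}=\tfrac{q-1}{N-1}g(xy)g(1)^{p-1}+\tfrac{q(p-1)}{N-1}g(x)g(y)g(1)^{p-2}.
\]
Substituting $g=\tfrac{g(1)}{f(1)}f$, using $g(1)^{p}=f(1)^{m}$, dividing by $f(1)^{m-2}\neq0$, and using $pq=nm$ (so $q(p-1)-n(m-1)=n-q$), everything collapses to $(n-q)f(1)f(xy)=(n-q)f(x)f(y)$; since $n\neq q$ this means $f(1)f(xy)=f(x)f(y)$, so $\varphi:=f/f(1)$ is additive \emph{and} multiplicative, hence a field homomorphism, and $f=f(1)\varphi$, $g=g(1)\varphi$, $f(1)^{m}=g(1)^{p}$, as required.

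\emph{Degenerate case, $f(1)=g(1)=0$.} This is the step I expect to be the most delicate. Assuming (by symmetry) $m<p$, substitute into $\Phi=\Psi$ the $N$-tuple with $m$ entries equal to $x$ and the other $N-m$ equal to $1$. On the left, any block of length $n$ receiving no entry $x$ contributes $f(1)=0$; as there are exactly $m$ entries $x$ and $m$ blocks, only the summands placing one $x$ in each block survive, each equal to $f(x)^{m}$, so the left side is $c\,f(x)^{m}$ with $c>0$ a combinatorial constant. On the right there are $p>m$ blocks but only $m$ entries $x$, so every summand has a block contributing $g(1)=0$, and the right side vanishes; hence $f(x)^{m}=0$, i.e.\ $f\equiv0$. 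Then \eqref{Eq1.5} forces $g(x^{q})=0$ for all $x\in\mathbb{K}$; since $\mathbb{K}$ has characteristic $0$, $qx$ is a fixed $\mathbb{Q}$-linear combination of $(x+j)^{q}$, $j=0,\ldots,q$ (the coefficient of $j^{q-1}$ extracted by Lagrange interpolation), so $q\,g(x)=0$ and $g\equiv0$. Thus $f\equiv g\equiv0$ and the claimed identities hold trivially. Notably, for $n=2$ the Levi-Civit\`{a}/differential-operator apparatus assembled above is not used; it will be needed only in passing to general $n$.
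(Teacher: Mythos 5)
Your proof is correct and follows essentially the same route as the paper's: polarization to obtain a vanishing symmetric $N$-additive form, the substitutions $(1,\dots,1)$, $(x,1,\dots,1)$, $(x,y,1,\dots,1)$, a case split according to whether $f(1)$ vanishes, and the block-counting argument with $m$ copies of $x$ in the degenerate case. If anything your write-up is tighter: you compute the coefficients $\tfrac{n-1}{N-1}$, $\tfrac{n(m-1)}{N-1}$ explicitly where the paper only asserts the existence of constants $c_{i},d_{i}$, you obtain $g=g(1)\varphi$ directly from the proportionality $g=\tfrac{g(1)}{f(1)}f$ (so the paper's separate ``$\psi=\varphi$ by symmetrization'' step is not needed), and you close the degenerate case by actually deducing $g\equiv 0$ from $g(x^{q})=0$ via the characteristic-zero expansion, a step the paper leaves implicit.
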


\begin{proof}

Let $N=n\cdot m=p\cdot q$. According to Lemma \ref{L1} we have that
the symmetric $N$-additive function
  $F\colon \mathbb{K}^{N}\to \mathbb{C}$ defined by
  \begin{multline*}
   F\left(x_{1}, \ldots, x_{N}\right)= \dfrac{1}{N!}\sum_{\sigma\in \mathscr{S}_{N}}\left[f\left(x_{\sigma(1)}\cdots x_{\sigma(n)}\right)\cdots
   f\left(x_{\sigma(N-n+1)}\cdots x_{\sigma(N)}\right) \right.
   \\
   \left.  -g\left(x_{\sigma(1)}\cdots x_{\sigma(q)}\right)\cdots g\left(x_{\sigma(N-q+1)}\cdots x_{\sigma(N)}\right)
   \right]
   \qquad
   \left(x_{1}, \ldots, x_{N}\in \mathbb{K}\right)
  \end{multline*}
is identically zero due to the fact that
\[
 F\left(x, \ldots, x\right)= f^m(x^{n})-g^{p}(x^q)=0
 \qquad
   \left(x\in \mathbb{K}\right).
\]
From this we get $F(1, 1, 1, \ldots, 1)=0$ which implies
\begin{equation}\label{eqfg1}
    f^m(1)-g^p(1)=0.
\end{equation}
By appropriate substitution, $F(x, 1, 1, \ldots, 1)=0$ clearly
follows for any $x\in \mathbb{K}$, or equivalently
\begin{equation}\label{eqfg2}
 f^{m-1}(1)f(x)-g^{p-1}(1)g(x)=0
 \qquad
 \left(x\in \mathbb{K}\right).
\end{equation}

If $g^{p-1}(1)=0$ and $f^{m-1}(1)\neq 0$, then $f\equiv 0$ would
follow, which is impossible. A similar argument shows that
$f^{m-1}(1)=0$ and $g^{p-1}(1)\neq 0$ is also impossible. This means
that either  $g^{p-1}(1)\neq 0$ and $f^{m-1}(1)\neq 0$ or
$g^{p-1}(1)= 0$ and $f^{m-1}(1)=0$.

If $g^{p-1}(1)\neq 0$ and $f^{m-1}(1)\neq 0$ then
\[
 F(x, y, 1, \ldots, 1)=0
 \qquad
 \left(x, y\in \mathbb{K}\right),
\]
implies that there exist constants $c_{1}, c_{2}, d_{1}, d_{2} \in
\mathbb{Q}$ so that $c_1+c_2=d_1+d_2=1$ and $c_1\ne d_1$ (since
$p\ne m$) such that
\begin{equation}\label{eqfg3}
 c_{1}f^{m-1}(1)f(xy)+c_{2}f^{m-2}(1)f(x)f(y)-d_1g^{p-1}(1)g(xy)-d_2g^{p-2}(1)g(x)g(y)=0
 \qquad
 \left(x, y\in \mathbb{K}\right).
\end{equation}
Applying equations \eqref{eqfg1} and \eqref{eqfg2} we get that
\begin{equation*}
     g^{p-2}(1)g(x)g(y)=\frac{(g^{p-1}(1)g(x))(g^{p-1}(1)g(y))}{g^p(1)}=\frac{(f^{m-1}(1)f(x))(f^{m-1}(1)f(y))}{f^{m}(1)}=  f^{m-2}(1)f(x)f(y),
\end{equation*}
 and we can eliminate $g$ from equation \eqref{eqfg3}
 \begin{align*}
 c_{1}f^{m-1}(1)&f(xy)+c_{2}f^{m-2}(1)f(x)f(y)-d_1f^{m-1}(1)f(xy)-d_2f^{m-2}(1)f(x)f(y)=\\&(c_1-d_1)f^{m-1}(1)f(xy)+(c_2-d_2)f^{m-2}(1)f(x)f(y) = 0.
\end{align*}
Since $c_1+c_2=d_1+d_2=1, c_1\ne d_1$ and $f(1)\ne 0$, it follows
that $c_1-d_1=-(c_2-d_2)\ne 0$ and the last expression can be
reduced to
$$ f(1)f(xy)=f(x)f(y).$$
Taking $\varphi(x)=f(x)/f(1)$ for all $x\in \mathbb{K}$, we get that
$\varphi(xy)=\varphi(x)\varphi(y)$ (i.e. $\varphi$ is
multiplicative). Also $\varphi$ is additive since $f$ is additive.
Thus $\varphi$ is an injective homomorphism of $\mathbb{K}$. A
similar argument shows that $g(x)=g(1)\psi(x)$, where $\psi$ is an
injective homomorphism of $\mathbb{K}$.
Substituting this into equation \eqref{Eq1.5}, we get that
\[
f^m(1)\varphi^N=g^p(1)\psi^N.
\]
Using equation \eqref{eqfg1} and a symmetrization process,
$\phi=\psi$ follows and we get
\[
 f(x)=f(1)\varphi(x)
 \quad
 \text{and}
 \quad
 g(x)=g(1)\varphi(x)
 \qquad
 \left(x\in \mathbb{K}\right)
\]
with a certain homomorphism $\varphi\colon \mathbb{K}\to \mathbb{C}$
and
$f(1)^{m}-g(1)^{n}=0$. 




Finally, if  $g(1)^{q-1}= 0$ and $f(1)^{m-1}=0$, then $g(1)=f(1)=0$
and we have two alternatives. Either $f\equiv 0$ and $g\equiv 0$ or
at least one of them is non-identically zero, say $f\not\equiv 0$.

The first case clearly yields a solution to equation \eqref{Eq1.3}.

Now we show that the latter case is not possible.
Without loss of generality we may assume that $m< p$. Then
\[
 0=F(\underbrace{x,\dots x}_m, 1, \ldots, 1)= C\cdot f(x)^m,
\] for some positive constant $C$. Indeed each other summand stemming from $f$ contain at least one term of $f(1)$ in the product, similarly each product of $g$'s contains $g(1)$.
Therefore $f(x)=0$ for all $x\in \mathbb{K}$, contradicting our
assumption.

\end{proof}

\section{Main results}

Firstly we show that every solution of equation \eqref{Eq1.3} is an
almost exponential  polynomial of the group $\mathbb{K}^{\times}$.

\begin{thm}\label{Thm_main}
 Let $n\in \mathbb{N}$ be arbitrary, $\mathbb{K}$ a field,
 $f_{1}, \ldots, f_{n}\colon \mathbb{K}\to \mathbb{C}$ additive functions. Suppose further that we are given natural numbers
 $p_{1}, \ldots, p_{n}, q_{1}, \ldots, q_{n}$ so that they fulfill condition $(\mathscr{C})$.
 If
 \begin{equation}\label{Eq1.6}
  \sum_{i=1}^{n}f_{i}^{q_{i}}\left(x^{p_{i}}\right)=0
 \end{equation}
holds for all $x\in \mathbb{K}$, then the functions $f_{1}, \ldots,
f_{n}\colon \mathbb{K}\to \mathbb{C}$ are almost exponential
polynomials of the Abelian group $\mathbb{K}^{\times}$.
\end{thm}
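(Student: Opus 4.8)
The plan is to deduce the statement from Shulman's decomposability theorem (Theorem~\ref{Thm_Shulman}) applied to the Abelian group $G=\mathbb{K}^{\times}$: it suffices to prove that for every $i$ with $p_{i}\geq 2$ the mapping $(x_{1},\ldots,x_{p_{i}})\mapsto f_{i}(x_{1}\cdots x_{p_{i}})$ is \emph{decomposable}, since then Theorem~\ref{Thm_Shulman} forces $f_{i}$ to be an almost exponential polynomial (the single index, if any, with $p_{i}=1$ is treated separately below). The first step is Lemma~\ref{L1}, which turns \eqref{Eq1.6} into the symmetric $N$-additive identity in $N$ variables; writing out the symmetrization, this identity has the shape
\[
 \sum_{i=1}^{n}\;\sum_{\pi}\;\prod_{B\in\pi}f_{i}\!\left(\prod_{j\in B}x_{j}\right)=0,
\]
where for each $i$ the inner sum runs over all partitions $\pi$ of $\{1,\ldots,N\}$ into blocks of size $p_{i}$. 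We may assume each $f_{i}\not\equiv 0$.

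The main manoeuvre is a specialization. Order the indices so that $p_{1}<\cdots<p_{n}$ and argue by downward induction on $k$: we show $f_{k}$ is an almost exponential polynomial, assuming this already for $f_{k+1},\ldots,f_{n}$ (the base case $k=n$ being the same argument with the inductive part vacuous). Assume first $p_{k}\geq 2$, fix $a\in\mathbb{K}^{\times}$, and substitute $x_{p_{k}+1}=\cdots=x_{N}=a$ into the $N$-variable identity, keeping $x_{1},\ldots,x_{p_{k}}$ free. In each product $\prod_{B\in\pi}f_{i}(\prod_{j\in B}x_{j})$ the free variables get distributed among the blocks; unless all $p_{k}$ of them fall into one single block — which is possible only when $p_{i}\geq p_{k}$ — the resulting term is a product of functions depending on disjoint proper nonempty subsets of $\{x_{1},\ldots,x_{p_{k}}\}$, hence decomposable. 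Collecting what survives one obtains, for every $a\in\mathbb{K}^{\times}$, an identity
\[
 \sum_{i\,:\,p_{i}\geq p_{k}}\beta_{i}\,f_{i}\!\left(a^{p_{i}}\right)^{q_{i}-1}f_{i}\!\left(a^{\,p_{i}-p_{k}}\,x_{1}\cdots x_{p_{k}}\right)=D_{a}(x_{1},\ldots,x_{p_{k}}),
\]
with positive combinatorial constants $\beta_{i}$ and a decomposable $D_{a}$. For $i>k$ the function $f_{i}$ is an almost exponential polynomial by the inductive hypothesis, so $(x_{1},\ldots,x_{p_{k}})\mapsto f_{i}(a^{p_{i}-p_{k}}x_{1}\cdots x_{p_{k}})$ is decomposable too (translation invariance of the class together with the elementary converse of Theorem~\ref{Thm_Shulman}); absorbing these summands into $D_{a}$ leaves
\[
 \beta_{k}\,f_{k}\!\left(a^{p_{k}}\right)^{q_{k}-1}f_{k}\!\left(x_{1}\cdots x_{p_{k}}\right)=\widetilde{D}_{a}(x_{1},\ldots,x_{p_{k}}),
\]
with $\widetilde{D}_{a}$ decomposable, again for all $a\in\mathbb{K}^{\times}$.

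To close the induction I split into two cases. If $f_{k}(a^{p_{k}})=0$ for all $a\in\mathbb{K}^{\times}$, then $f_{k}(x^{p_{k}})=0$ for every $x\in\mathbb{K}$; applying the polarization formula (Lemma~\ref{mainfact}) to the symmetric $p_{k}$-additive map $(x_{1},\ldots,x_{p_{k}})\mapsto f_{k}(x_{1}\cdots x_{p_{k}})$ and then setting all but one variable equal to $1$ yields $f_{k}\equiv 0$, which is an almost exponential polynomial. Otherwise pick $a_{0}$ with $f_{k}(a_{0}^{p_{k}})\neq 0$; dividing the last displayed identity by the nonzero scalar $\beta_{k}f_{k}(a_{0}^{p_{k}})^{q_{k}-1}$ exhibits $f_{k}(x_{1}\cdots x_{p_{k}})$ as decomposable, and Theorem~\ref{Thm_Shulman} applies since $p_{k}\geq 2$. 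There remains the boundary case $p_{k}=1$, which by distinctness of the $p_{i}$ forces $k=1$; freezing all but one variable now gives $\sum_{i=1}^{n}f_{i}(a^{p_{i}})^{q_{i}-1}f_{i}(a^{p_{i}-1}x)=0$ for all $x,a\in\mathbb{K}^{\times}$, and since $f_{2},\ldots,f_{n}$, hence each $x\mapsto f_{i}(a^{p_{i}-1}x)$, are already almost exponential polynomials, choosing $a$ with $f_{1}(a)\neq 0$ (the case $f_{1}\equiv 0$ being trivial) expresses $f_{1}$ as a finite linear combination of almost exponential polynomials, which is one.

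The principal obstacle — the reason for both the parameter $a$ and the case distinction — is that the $f_{k}$-term of \eqref{Eq1.6} enters the symmetrized identity only through products of $q_{k}\geq 2$ copies of $f_{k}$, so a lone factor $f_{k}(x_{1}\cdots x_{p_{k}})$ can be isolated only up to the scalar $f_{k}(a^{p_{k}})^{q_{k}-1}$: one must either produce a witness $a_{0}$ making this scalar nonzero, which is exactly possible unless $f_{k}$ annihilates every $p_{k}$-th power, or else use that such total vanishing collapses $f_{k}$ to $0$. A secondary point requiring care is keeping the class of decomposable functions stable under the operations used along the way (finite sums, multiplication by functions of the complementary variables, translation, and composition with group multiplication), which is the content of the easy companion to Theorem~\ref{Thm_Shulman}.
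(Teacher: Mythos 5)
Your proof is correct and follows essentially the same route as the paper's: symmetrize via Lemma~\ref{L1}, process the indices in decreasing order of $p_i$ by induction, observe that the only non-decomposable contribution after freezing the remaining variables comes from the partitions placing all free variables into a single block, and invoke Theorem~\ref{Thm_Shulman}. You are in fact somewhat more explicit than the paper on two points it leaves implicit, namely the dichotomy between the scalar $f_{k}\left(a^{p_{k}}\right)^{q_{k}-1}$ vanishing for every $a$ (whence $f_{k}\equiv 0$ by polarization) and the existence of a witness $a_{0}$, and the boundary case $p_{k}=1$ where Shulman's theorem cannot be applied directly.
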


\begin{proof}
 Suppose that the conditions are satisfied, then due to Lemma \ref{L1}, we have that the mapping
 $F\colon \mathbb{K}^{N}\to \mathbb{C}$ defined by
 \[
  F(x_{1}, \ldots, x_{N})=\sum_{i=1}^{n}\dfrac{1}{N!}\sum_{\sigma\in \mathscr{S}_{N}}f_{i}\left(x_{\sigma(1)}\cdots x_{\sigma(p_{i})}\right)
  \cdots f_{i}\left(x_{\sigma(N-p_{i}+1)}\cdots x_{\sigma(N)}\right)
  \qquad
  \left(x_{1}, \ldots, x_{N}\in \mathbb{K}\right)
 \]
is identically zero.

From this we immediately conclude that for any $x\in \mathbb{K}$
\[
 F(x, 1, \ldots, 1)=0
\]
holds, that is,
\begin{equation}\label{Eq_dep}
 \sum_{i=1}^{n}f_{i}(x)f_{i}^{q_{i}-1}(1)=0
 \qquad
 \left(x\in \mathbb{K}\right).
\end{equation}

Again, due to the fact that $F$ has to be identically zero, we also
have
\[
 F(x, y, 1, \ldots, 1)=0
 \qquad
 \left(x, y\in \mathbb{K}\right),
\]
i.e.,
\begin{equation}\label{Eq_lc}
\sum_{i=1}^{n}\left[c_{i}f_{i}(xy)+d_{i}f_{i}(x)f_{i}(y)\right]=0
\qquad \left(x, y\in \mathbb{K}\right)
\end{equation}
with certain constants $c_{i}, d_{i}\in \mathbb{C}$.

Without the loss of generality we can (and we also do) assume that
the parameters $q_{1}, \ldots, q_{n}$ are arranged in a strictly
increasing order, that is,
\[
 q_{1} < q_{2} < \cdots < q_{n}
\]
holds and  (due to condition $(\mathscr{C})$) we
have that
\[
 p_{1}> p_{2} > \cdots > p_{n}
\]
is also fulfilled.

We will show by induction on $n$ that all the mappings $f_{1},
\ldots, f_{n}$ are almost exponential polynomials. Since the
multiadditive mapping $F$ is identically zero on $\mathbb{K}^{N}$,
we have that
\begin{multline*}
 \sum_{\sigma\in \mathscr{S}_{N}}
 f_{1}\left(x_{\sigma(1)}\cdots x_{\sigma(p_{1})}\right) \cdots f_{1}\left(x_{\sigma(N-p_{1}+1)}\cdots x_{\sigma(N)}\right)
 \\
 =
 - \sum_{i=2}^{n}\sum_{\sigma\in \mathscr{S}_{N}}f_{i}\left(x_{\sigma(1)}\cdots x_{\sigma(p_{i})}\right)
  \cdots f_{i}\left(x_{\sigma(N-p_{i}+1)}\cdots x_{\sigma(N)}\right)
  \qquad
  \left(x_{1}, \ldots, x_{N}\in \mathbb{K}\right)
\end{multline*}
Let us keep all the variables $x_{p_{1}+1}, \ldots, x_{N}$ be fixed,
while the others are arbitrary. Then the above identity yields that
either $f_{1}$ is identically zero or $f_{1}$ is decomposable. Due
to Theorem \ref{Thm_Shulman}, in any cases we have that $f_{1}$ is
an almost exponential polynomial function. Therefore, for any
finitely generated subgroup $H\subset \mathbb{K}^{\times}$, the
function $f_{1}\vert_{H}$ is an exponential polynomial. In other
words for any finitely generated subgroup $H\subset
\mathbb{K}^{\times}$, the mapping $f_{1}\vert_{H}$ is not only
decomposable but also fulfills a certain multivariate
Levi-Civit\`{a} functional equation.

Assume now that there exists a natural number $k$ with $k\leq n-1$
so that all the mappings $f_{1}, \ldots, f_{k}$ are almost
exponential polynomials. Then, again due to the fact that $F\equiv
0$, we have that
\begin{multline*}
 \sum_{\sigma\in \mathscr{S}_{N}}
 f_{k+1}\left(x_{\sigma(1)}\cdots x_{\sigma(p_{k+1})}\right) \cdots f_{k+1}\left(x_{\sigma(N-p_{k+1}+1)}\cdots x_{\sigma(N)}\right)
 \\
 = -
  \sum_{i=1}^{k}\sum_{\sigma\in \mathscr{S}_{N}}f_{i}\left(x_{\sigma(1)}\cdots x_{\sigma(p_{i})}\right)
  \cdots f_{i}\left(x_{\sigma(N-p_{i}+1)}\cdots x_{\sigma(N)}\right)
 \\ -
   \sum_{i=k+2}^{n}\sum_{\sigma\in \mathscr{S}_{N}}f_{i}\left(x_{\sigma(1)}\cdots x_{\sigma(p_{i})}\right)
  \cdots f_{i}\left(x_{\sigma(N-p_{i}+1)}\cdots x_{\sigma(N)}\right)
  \\
  \left(x_{1}, \ldots, x_{N}\in \mathbb{K}\right).
\end{multline*}
Let us keep all the variables $x_{p_{k+1}+1}, \ldots, x_{N}$ be
fixed, while the others are arbitrary. Then, in view of Theorem
\ref{Thm_Shulman}, this equation yields that either $f_{k+1}$ is
identically zero or $f_{k+1}$ is an almost exponential polynomial,
due to the fact that the first summand on the right-hand side is an
almost exponential polynomial by induction, while the other summand
consists only of decomposable terms.
\end{proof}

\begin{rem}
 Note that if
 \[
  f_{i}(x)=a_{i}f(x)
  \qquad
  \left(x\in \mathbb{K}\right)
 \]
holds for all $i=1, \ldots, n$ with certain complex constants
$a_{1}, \ldots, a_{n}$ (assuming that at least one of them is
nonzero), then we immediately get that there exists a homomorphism
$\varphi\colon \mathbb{K}\to \mathbb{C}$ such that
\[
 f_{i}(x)=f_{i}(1)\varphi(x)
 \qquad
 \left(x\in \mathbb{K}\right).
\]
Indeed, in this case equation \eqref{Eq_lc} yields that
\[
 \sum_{i=1}^{n}\left[c_{i}a_{i}f(xy)+d_{i}a_{i}^{2}f(x)f(y)\right]=0
 \qquad
 \left(x, y\in \mathbb{K}\right),
\]
that is, $f$ satisfies the Pexider equation
\[
 \alpha f(xy)= \beta f(x)f(y)
 \qquad
 \left(x, y\in \mathbb{K}\right).
\]
This means that $f$ is a constant multiple of a multiplicative
function. Since $f$ has to be additive too, this multiplicative
function has to be in fact a homomorphism. All in all, we have that
the additive function $f\colon \mathbb{K}\to \mathbb{C}$ fulfills
equation
\[
 \sum_{i=1}^{n}a_{i}^{q_{i}}f\left(x^{p_{i}}\right)^{q_{i}}=0
 \qquad
 \left(x\in \mathbb{K}\right)
\]
with certain complex constants $a_{1}, \ldots, a_{n}$ if and only if
there exists a homomorphism such that
\[
 f(x)=f(1)\varphi(x)
 \qquad
 \left(x\in \mathbb{K}\right),
\]
moreover we also have
\[
 \sum_{i=1}^{n}a_{i}^{q_{i}}f(1)^{q_{i}}=0.
\]
\end{rem}

As a consequence of the previous statement and  Theorem
\ref{thm_indep2} we have the following.

\begin{thm}\label{thm_sep}
Let $\mathbb{K}\subset \C$ be a field of finite transcendence degree over $\mathbb{Q}$. 
Each additive solution of equation \eqref{Eq1.3} must be of the form
\[
f_i= \sum_{j=1}^{n-1}P_{i,j}\varphi_{j},
\]
where $P_{i,j}$'s are  polynomials on
$\mathbb{K}^{\times}$ and $\varphi_j\colon\mathbb{K}\to \mathbb{C}$
are field homomorphisms.
 Then
 \[
 \tilde{f}_{i}(x)=P_{i,j}\varphi_{j}(x)
 \qquad
 \left(x\in \mathbb{K}, j=1, \ldots, n-1\right).
\]
 is also a solution of \eqref{Eq1.3} and all $\tilde{f}_{i}$  are additive.
\end{thm}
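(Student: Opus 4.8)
The plan is to proceed in two stages. In the first I would fix the exponential‑polynomial shape of the functions $f_i$ and show that the exponents occurring in it are field homomorphisms; in the second I would ``decouple'' equation~\eqref{Eq1.3} along each such homomorphism by means of the algebraic‑independence theorem.

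\emph{Stage 1: the representation and the additivity of the layers.} By Theorem~\ref{Thm_main} each $f_i$ is an almost exponential polynomial of $\mathbb{K}^{\times}$. Using the finiteness of the transcendence degree I would upgrade this to a global exponential‑polynomial representation
\[
 f_i=\sum_{k=1}^{K}p_{i,k}\cdot m_k\qquad(i=1,\dots,n)
\]
with $m_1,\dots,m_K$ distinct exponentials of $\mathbb{K}^{\times}$ and $p_{i,k}$ polynomials on $\mathbb{K}^{\times}$; this is what Theorem~\ref{thm_deriv} (and the results it quotes) gives, combined with a Levi-Civit\`a analysis of \eqref{Eq_lc}, read as $\bigl(\sum_i c_if_i\bigr)(xy)=\sum_i\bigl(-d_if_i\bigr)(x)f_i(y)$ and handled via Theorem~\ref{Szekely}, the degenerate branches in which some $f_i(1)$ vanishes being dispatched exactly as in Proposition~\ref{Prop1}. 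Next, the linear space $V$ of additive functions on $\mathbb{K}^{+}$ is invariant under every substitution $g\mapsto\bigl(x\mapsto g(tx)\bigr)$ with $t\in\mathbb{K}^{\times}$, hence is a translation‑invariant subspace of the functions on $\mathbb{K}^{\times}$ containing each $f_i$; Lemma~\ref{lem_tils} therefore forces $p_{i,k}\cdot m_k\in V$ and $m_k\in V$ for all $i,k$. Thus each $m_k$ is simultaneously additive and multiplicative, that is an (injective) field homomorphism $\varphi_k$, and each layer $p_{i,k}\cdot\varphi_k$ is additive. This already yields the claimed form $f_i=\sum_jP_{i,j}\varphi_j$ and the additivity of $\tilde f_i=P_{i,j}\varphi_j$; the bound $K\le n-1$ is then read off from \eqref{Eq1.3} through Theorem~\ref{thm_indep2} (as the computation of Stage~2 shows, a homomorphism appearing in only one $f_i$ would be forced to disappear), in agreement with $K=1=n-1$ in the two‑function case of Proposition~\ref{Prop1}.

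\emph{Stage 2: the layers satisfy \eqref{Eq1.3}.} Fix $j$, substitute $f_i=\sum_kP_{i,k}\varphi_k$ into \eqref{Eq1.3} and expand each $q_i$‑th power by the multinomial theorem. Since $\varphi_k(x^{p_i})=\varphi_k(x)^{p_i}$, and since $P_{i,k}(x^{p_i})$ is again a polynomial on $\mathbb{K}^{\times}$ (its representing additive functions get composed with $x\mapsto x^{p_i}$, which only multiplies them by $p_i$), the left‑hand side of \eqref{Eq1.3} becomes $\sum_{|\gamma|=N}C_{\gamma}(x)\,\prod_k\varphi_k(x)^{\gamma_k}$ with polynomials $C_\gamma$ on $\mathbb{K}^{\times}$. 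By Theorem~\ref{thm_indep2} every $C_\gamma$ vanishes identically; and for the multi‑index $\gamma$ with $\gamma_j=N$ and $\gamma_k=0$ otherwise, only the ``pure'' terms (those with $\beta_j=q_i$ and $\beta_k=0$ for $k\ne j$) contribute, so this particular relation reads $\sum_{i=1}^n\bigl(P_{i,j}(x^{p_i})\bigr)^{q_i}\varphi_j(x)^N=0$, i.e.\ $\sum_{i=1}^n\tilde f_i(x^{p_i})^{q_i}=0$. Hence $(\tilde f_1,\dots,\tilde f_n)$ is again an (additive) solution of \eqref{Eq1.3}.

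\emph{Main obstacle.} The crux is Stage~1: passing from ``almost/local exponential polynomial'' to a genuine exponential polynomial on the — in general not finitely generated — group $\mathbb{K}^{\times}$, and keeping the number of exponents down to $n-1$. This is exactly where the hypothesis of finite transcendence degree enters (through Theorem~\ref{thm_deriv} and Lemma~\ref{lem_hom}) and where condition $(\mathscr{C})$ is exploited. Stage~2, by contrast, is a routine ``separation by character'' that rests only on Theorem~\ref{thm_indep2} together with the multiplicative independence of distinct field homomorphisms.
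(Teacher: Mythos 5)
Your proposal is correct and follows essentially the same route as the paper: Theorem \ref{Thm_main} plus the finite transcendence degree (via Theorem \ref{Szekely}) to get a genuine exponential-polynomial representation, Lemma \ref{lem_tils} applied to the multiplication-translation-invariant space of additive functions to make each layer $P_{i,j}\cdot m_j$ and each exponential $m_j$ additive (hence $m_j$ a field homomorphism, extendable by Lemma \ref{lem_hom}), and separation of \eqref{Eq1.3} along the distinct exponentials via the algebraic-independence theorem. Your Stage 2 merely spells out the multinomial bookkeeping that the paper compresses into ``taking all terms that contain only $m_j$''; the order of the two stages is swapped relative to the paper, but this is immaterial.
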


\begin{proof}
By Theorem \ref{Thm_main}, the solutions $f_i\colon \mathbb{K}\to
\mathbb{C}$ of \eqref{Eq1.3} are almost exponential  polynomials of
the Abelian group $\mathbb{K}^{\times}$. Since $\mathbb{K}$ is a field of finite transcendence degree, 
by Remark \ref{rem_diff} and Theorem \ref{Szekely} all $f_i$'s are exponential polynomials. 
Thus there are nonnegative integers $k,l\le n-1$ and distinct
(nonconstant) exponential functions $m_1, \dots, m_k\colon
\mathbb{K}^{\times}\to \mathbb{C}$, further additive functions $a_1,
\dots,a_l\colon G\to \mathbb{C}$ that are linearly independent over
$\mathbb{C}$   and classical complex polynomials $P_{i,1}, \dots,
P_{i,k}\colon \mathbb{C}^l\to \mathbb{C}$ with $\deg{P_{i,j}}\le
n-1$  be  such that
\begin{equation}\label{eqext}
    f_i=\sum_{j=1}^k P_{i,j}(a_1,\dots, a_l)m_j.
\end{equation}
Substituting $f_i$ to \eqref{Eq1.3}, we have
\begin{equation}
 0=\sum_{i=1}^{n}f^{q_{i}}_{i}\left(x^{p_{i}}\right)
 =\sum_{i=1}^{n}\left( \sum_{j=1}^{k} P_{i,j}(a_1,\dots, a_l)m_j \right)^{q_i}(x^{p_i}).
\end{equation}

Since $m_{1}, \ldots, m_{k}$ are distinct (nonconstant)
exponentials, the coefficients of the terms $m_1^{a_1}\cdots
m_k^{a_k}$ in the expansion must be $0$. Taking all terms that
contains only $m_j$ as an exponential in the product. By this
reduction, we get that
\begin{equation}\label{eqet}
    \sum_{i=1}^n (P_{i,j} m_j)^{q_i}(x^{p_i})=0
\end{equation}
holds for all $j=1, \dots, k$. 

The additive functions with respect to addition on $\mathbb{K}$
constitute a linear space that is translation invariant with respect
to multiplication on $\mathbb{K}^{\times}$. By Lemma \ref{lem_tils},
we get that if $\sum_{j=1}^k (P_{i,j} m_j)$ is additive (with
respect to addition on $\mathbb{K}$), then $P_{i,j}\cdot m_j$ and
$m_j$ are additive for every $j=1, \dots,k$. The first implies that
$\tilde{f}_i$ is additive. Since $m_j$ is additive on $\mathbb{K}$
that has finite transcendence degree and multiplicative on
$\mathbb{K}^{\times}$, by Lemma \ref{lem_hom} $m_j$ can be extended
as an automorphism $\phi_j$ of $\mathbb{C}$. These imply the
statement.

\end{proof}

\begin{rem}\label{rem_noimp}
It is worth to note that the role of homomorphism $m$ lost its
importance. By Theorem \ref{thm_sep} for finding a solution of
\eqref{Eq1.3} it is enough to find all solutions of \eqref{eqet}
separately for every $j=1, \dots, k$.
Since $N=p_1q_1=\cdots =p_nq_n$ and $m_j\neq 0$, equation \eqref{eqet} is equivalent to 
\begin{equation}\label{eqegs}\sum_{i=1}^n P^{q_i}_{i,j}(x^{p_i})=0.\end{equation}
Conversely, if \eqref{eqegs} holds and  $P_{i,j}(x)\cdot x$ is
additive, then $f_i=P_{i,j}\varphi$ is an additive solution of
\eqref{eqet}, where $\varphi$ is an arbitrary homomorphism.
\end{rem}

\begin{rem}\label{rem_noadd}
Our next aim is to prove Theorem \ref{thm_auto}. If we omit the
condition of additivity of $f_i$ then we can easily find solutions
that are neither homomorphisms, nor differential operators as it can
be seen  in Example \ref{ex3}.
\end{rem}
\begin{ex}\label{ex3}
To illustrate this, let us consider the following equation on a
field $\mathbb{K}$.
\begin{equation}\label{eqreex1}
    f(x^4)+g^2(x^2)+h^4(x)=0
\qquad \left(x\in \mathbb{K}\right),
\end{equation}
where $f, g, h \colon \mathbb{K}\to \mathbb{K}$ denote the unknown
(not necessarily additive) functions. 

Let $d\colon \mathbb{K}\to \mathbb{K}$ be a nontrivial derivation and define the 
function $a\colon \mathbb{K}^{\times} \to \mathbb{K}$ by 
\[
 a(x)= \dfrac{d(x)}{x} 
 \qquad 
 \left(x\in \mathbb{K}^{\times}\right). 
\]
Then $a$ is an additive function on group $\mathbb{K}^{\times}$. 

Consider the functions $f, g$ and $h$ defined through
\begin{align*}
    f(x)&=-(20+4a(x)+a^2(x))x,\\
    g(x)&=2(1+a(x))x,\\
    h(x)&=2x, 
\end{align*}
that clearly provide a solution for \eqref{eqreex1}. 
Indeed, using $a^k(x^l)=l^k\cdot a^k(x)$ for all $l,k\in \mathbb{N}$ we have
\begin{align*}
    f(x^4)&=-(20+4a(x^4)+a^2(x^4))x^4=-(20+16a(x)+16a^2(x))x^4,\\
    g^2(x^2)&=(2(1+a(x^2)))^2 x^4=(4+16a(x)+16a^2(x))x^4,\\
    h^4(x)&=16x^4.
\end{align*}

On the other hand,  it does not satisfies \eqref{Eq_dep}. Clearly,
$f(1)=-20, g(1)=2, h(1)=2$ and
$$f(x)+g(1)g(x)+h^3(1)h(1)=(-20-a(x)-a^2(x)+4+4a(x)+8)x=-a^2(x)+3a(x)-8\ne
0.$$ This is caused by the fact that at least one of the function $f,g$ and $h$ is not
additive. 
It is easy to check that $g$ and $h$ are additive on $\mathbb{K}$, but $f$ is not.
\end{ex}

\begin{thm}\label{thm_auto}
 Let $n\in \mathbb{N}$ be arbitrary, $\mathbb{K}$ a field,
 $f_{1}, \ldots, f_{n}\colon \mathbb{K}\to \mathbb{C}$ additive functions. Suppose further that we are given natural numbers
 $p_{1}, \ldots, p_{n}, q_{1}, \ldots, q_{n}$ such that they fulfill condition $(\mathscr{C})$.
 If
 \begin{equation}\label{eqfi}
  \sum_{i=1}^{n}f_{i}^{q_{i}}\left(x^{p_{i}}\right)=0
 \end{equation}
holds for all $x\in \mathbb{K}$, then there exist homomorphisms
$\varphi_{1}, \ldots, \varphi_{n-1}\colon \mathbb{K}\to \mathbb{C}$
and $\alpha_{i, j}\in\mathbb{C}\, (i=1, \ldots, n; j=1, \ldots,
n-1)$ so that
\begin{equation}\label{eq_lincomb}
 f_{i}(x)=\sum_{j=1}^{n-1}\alpha_{i, j}\varphi_{j}(x)
 \qquad
 \left(x\in \mathbb{K}\right).
\end{equation}
Moreover  $\alpha_{i,j}\varphi_j$ gives also a solution of
\eqref{eqfi}.
\end{thm}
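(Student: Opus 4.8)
The plan is to strengthen Theorem~\ref{thm_sep} in two ways: to drop the hypothesis on the transcendence degree, and to prove that the polynomials $P_{i,j}$ occurring there are in fact constants. I would first reduce to the setting of Theorem~\ref{thm_sep}. If $\mathbb{K}_{0}\subseteq\mathbb{K}$ is a finitely generated subfield, then $\mathbb{K}_{0}$ has finite transcendence degree over $\mathbb{Q}$, hence embeds into $\mathbb{C}$; the restrictions $f_{i}\vert_{\mathbb{K}_{0}}$ are additive and again satisfy \eqref{eqfi}, so the finite--transcendence--degree case (treated below) writes each $f_{i}\vert_{\mathbb{K}_{0}}$ as a $\C$--linear combination of at most $n-1$ field homomorphisms $\mathbb{K}_{0}\to\mathbb{C}$. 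Since a function that is simultaneously a field homomorphism and a finite $\C$--linear combination of pairwise distinct homomorphisms must coincide with one of them (linear independence of characters), these local descriptions are compatible: enlarging $\mathbb{K}_{0}$, the homomorphisms that genuinely occur and their coefficients stabilize, and the direct limit over all finitely generated subfields produces $\varphi_{1},\dots,\varphi_{n-1}\colon\mathbb{K}\to\mathbb{C}$ and $\al_{i,j}\in\C$ with \eqref{eq_lincomb}. From now on I assume $\mathbb{K}$ has finite transcendence degree over $\mathbb{Q}$.

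By Theorem~\ref{thm_sep} we may write $f_{i}=\sum_{j=1}^{n-1}P_{i,j}\varphi_{j}$ with distinct homomorphisms $\varphi_{j}$, where for each fixed $j$ the function $\tilde f_{i}:=P_{i,j}\varphi_{j}$ is additive and, by Remark~\ref{rem_noimp}, the $P_{i,j}$ satisfy \eqref{eqegs}, i.e.\ $\sum_{i=1}^{n}P_{i,j}(x^{p_{i}})^{q_{i}}=0$. Fix $j$. Since $\varphi_{j}$ is a homomorphism and $\tilde f_{i}=P_{i,j}\varphi_{j}$ is additive, Theorem~\ref{thm_deriv} gives a differential operator $D_{i}=D_{i,j}$ on $\mathbb{K}$ with $\tilde f_{i}=\varphi_{j}\circ D_{i}$ and $\deg P_{i,j}=\operatorname{ord}D_{i}$. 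Substituting $\tilde f_{i}=\varphi_{j}\circ D_{i}$ into \eqref{eqet}, using that $\varphi_{j}$ is multiplicative and additive to factor the equation as $\varphi_{j}\!\bigl(\sum_{i}D_{i}(x^{p_{i}})^{q_{i}}\bigr)=0$, and using that $\varphi_{j}$ is injective (characteristic $0$), one gets $\sum_{i=1}^{n}D_{i}(x^{p_{i}})^{q_{i}}=0$ in $\mathbb{K}$. Hence it suffices to prove: \emph{if $D_{1},\dots,D_{n}$ are differential operators on $\mathbb{K}$ and $p_{i},q_{i}$ satisfy $(\mathscr{C})$, then $\sum_{i}D_{i}(x^{p_{i}})^{q_{i}}=0$ forces $\operatorname{ord}D_{i}=0$ for all $i$} — equivalently each $D_{i}$ is a scalar multiple of $\mathrm{id}$ and each $P_{i,j}$ is constant.

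For this core statement, let $r_{i}=\operatorname{ord}D_{i}$. A direct computation with the product rule (in the spirit of the order--degree correspondence of \cite{KL2}) shows that on $\mathbb{K}^{\times}$ the function $x\mapsto D_{i}(x^{p_{i}})/x^{p_{i}}$ is a polynomial of degree exactly $r_{i}$ in finitely many linearly independent additive functions, whose homogeneous part of top degree is $p_{i}^{\,r_{i}}\sigma_{i}$, with the principal symbol $\sigma_{i}\neq 0$ independent of $p_{i}$. Dividing the identity $\sum_{i}D_{i}(x^{p_{i}})^{q_{i}}=0$ by $x^{N}$ and applying Lemma~\ref{L_lin_dep} yields a polynomial identity $\sum_{i}\bigl(p_{i}^{\,r_{i}}\sigma_{i}+(\text{lower degree})\bigr)^{q_{i}}=0$ in those additive functions. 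Assuming $r:=\max_{i}r_{i}\ge 1$ and comparing the homogeneous parts of top degree $R:=\max\{r_{i}q_{i}:r_{i}\ge 1\}$ gives $\sum_{i\in I}p_{i}^{\,R}\sigma_{i}^{\,q_{i}}=0$, where $I=\{i:r_{i}q_{i}=R\}$ and each $\sigma_{i}^{\,q_{i}}\neq 0$. If $|I|=1$ this is a contradiction. Excluding $|I|\ge 2$ is the step I expect to be the main obstacle: there the $p_{i}$ are pairwise distinct and the $q_{i}$ are pairwise distinct (condition $(\mathscr{C})$), and one must carry the comparison down to the successive lower-degree homogeneous parts, where the explicit, derivation-determined (hence rigid) form of the subleading terms of $D_{i}(x^{p_{i}})/x^{p_{i}}$, together with the distinctness of the $p_{i}$ and the $q_{i}$, prevents any cancellation and forces every $\sigma_{i}=0$ — a contradiction. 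Thus all $r_{i}=0$, every $P_{i,j}$ is constant, and \eqref{eq_lincomb} follows.

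It remains to verify the ``moreover'' part. With $f_{i}=\sum_{j=1}^{n-1}\al_{i,j}\varphi_{j}$ and pairwise distinct $\varphi_{j}$, substituting into \eqref{eqfi} and using $\varphi_{j}(x^{p_{i}})=\varphi_{j}(x)^{p_{i}}$ gives $0=\sum_{i=1}^{n}\bigl(\sum_{j=1}^{n-1}\al_{i,j}\varphi_{j}(x)^{p_{i}}\bigr)^{q_{i}}$. Expanding the powers and collecting the terms according to the exponent multi-index $s$ of the monomial $\varphi_{1}^{s_{1}}\cdots\varphi_{n-1}^{s_{n-1}}$ (necessarily $|s|=N$), and using that these monomials are linearly independent (Lemma~\ref{L_lin_dep}, cf.\ Theorem~\ref{thm_indep2}) because the $\varphi_{j}$ are distinct homomorphisms, every such coefficient vanishes. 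The only contribution to $\varphi_{j}^{N}$ from the $i$-th summand comes from choosing $\al_{i,j}\varphi_{j}^{p_{i}}$ in each of its $q_{i}$ factors, so that coefficient equals $\sum_{i=1}^{n}\al_{i,j}^{\,q_{i}}$, whence $\sum_{i=1}^{n}\al_{i,j}^{\,q_{i}}=0$. Therefore $\sum_{i=1}^{n}\bigl(\al_{i,j}\varphi_{j}(x^{p_{i}})\bigr)^{q_{i}}=\varphi_{j}(x)^{N}\sum_{i=1}^{n}\al_{i,j}^{\,q_{i}}=0$, i.e.\ the additive functions $\al_{i,j}\varphi_{j}$ $(i=1,\dots,n)$ solve \eqref{eqfi}, which completes the proof.
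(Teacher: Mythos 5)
Your reduction steps track the paper's closely: passing to finitely generated subfields to remove the transcendence-degree hypothesis, invoking Theorem \ref{thm_sep} to split off a single homomorphism, replacing $P_{i,j}\varphi_j$ by a differential operator $D_i$ via Theorem \ref{thm_deriv}, and reducing everything to the claim that $\sum_i D_i(x^{p_i})^{q_i}=0$ forces $\operatorname{ord}D_i=0$. The ``moreover'' verification at the end is also fine. But the core claim is not proved. You compare the top-degree homogeneous parts of $(D_i(x^{p_i})/x^{p_i})^{q_i}$, obtain $\sum_{i\in I}p_i^{R}\sigma_i^{q_i}=0$ with $I=\{i: r_iq_i=R\}$, dispose of $|I|=1$, and then explicitly defer $|I|\ge 2$ (``the step I expect to be the main obstacle''), asserting only that the rigidity of the subleading terms ``prevents any cancellation.'' That deferred case is precisely where the content of the theorem lives, and it is not a routine verification: cancellation of the leading symbols is genuinely possible at the level you are working at. For instance, with $r_1q_1=r_2q_2=R$ and $\sigma_1$ proportional to a power of $\sigma_2$, the identity $\sigma_1^{q_1}+c\,\sigma_2^{q_2}=0$ can hold with nonzero symbols, so ``comparing successive homogeneous parts'' has no evident termination and forces you into exactly the combinatorial analysis you have skipped. (There is also a secondary soft spot: your claim that the principal symbol $\sigma_i$ of a nonzero top-order part is automatically nonzero needs the order--degree correspondence of \cite{KL2} and the fixed basis $\mathscr{B}$ of Remark \ref{rem_diff}; distinct compositions can share the same symbol.)

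The paper avoids this obstruction with a sharper selection than the full principal symbol. It fixes the index $i$ for which $D_i$ has \emph{maximal} order $M$ and, among those, \emph{minimal} exponent $q_i$, picks one length-$M$ basis composition $d_{j_1}\circ\cdots\circ d_{j_M}\in\mathscr{B}$ occurring in $D_i$, and isolates the single monomial $x^{q_i(p_i-1)}\bigl(d_{j_1}\circ\cdots\circ d_{j_M}(x)\bigr)^{q_i}$ in the expansion. Algebraic independence of the basis compositions (Theorem \ref{thm_indep} via the independence of field homomorphisms) makes the coefficient of this one monomial well defined, and the minimality of $q_i$ together with condition $(\mathscr{C})$ is what rules out contributions from the other summands --- this is the single decisive step, and it has no counterpart in your write-up. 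To repair your proof you would need to replace the symbol comparison by an argument of this type (or otherwise actually carry out the descent you describe); as it stands the proposal establishes the framework but not the theorem.
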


\begin{proof}
Let us assume first that $\mathbb{K}\subset\C$ be a field of finite
transcendence degree over $\mathbb{Q}$. By Theorem \ref{thm_sep} we
can restrict our attention on the solutions $f_i=P_i\cdot \varphi$.
Namely,
\[
0=\sum_{i=1}^n(P_i\cdot\varphi)(x^{p_i})^{q_i}=\varphi(x^N)\cdot
\sum_{i=1}^n P_i^{p_i}(x^{p_i})^{q_i} \qquad \left(x\in
\mathbb{K}\right).
\]

Clearly $\varphi$ has no special role in the previous equation (see
Remark \ref{rem_noimp}), thus $\varphi\equiv id$ can be assumed
along the proof. Therefore the solutions are $f_i=P_i\cdot id$.
By Theorem \ref{thm_deriv} we can identify $f_i=P_i\cdot x$ with a
derivation $D_i$ defined with \eqref{eqdiff}, where the degree of
$P_i$ is the same as the order of $D_i$. Let us denote the maximal
degree of all $P_i$ by $M$. Note that $D_{i}$ can be uniquely
written in terms of the elements of the basis $\mathscr{B}$ defined
as in Remark \ref{rem_diff}.

Let the elements of $\mathscr{B}$ be the functions $x, d_{1}, \dots, d_k, \dots, d_{i_1}\circ\cdots\circ d_{i_s}(x)$ that are linearly independent over $\C$ for all $i_1,\dots, i_s<n$. 
Since every composition is an additive function on $\mathbb{K}$, by
Theorem \ref{thm_indep} we get that the elements of $\mathscr{B}$
are also algebraically independent.

Now fix $i$ such that $D_i$ has maximal order $M$ and $q_i$ is the
smallest possible.  Thus it contains a term $d_{j_1}\circ\cdots\circ
d_{j_M}\in \mathscr{B}$.  Then we have that
$$d_{j_1}\circ\cdots\circ d_{j_M}(x^{p_i})=p_ix^{p_i-1}(d_{j_1}\circ\dots \circ d_{j_M})(x)+p_i(p_i-1)x^{p_i-2}d_1(x) (d_2\circ \dots \circ d_{j_M})(x)+\dots$$

Let us assume that $M>1$. Since $x, d_{j_1}\circ\cdots\circ
d_{j_M}(x)\in \mathscr{B}$ and they are distinct, the coefficient of
\begin{equation}\label{eq_depo}x^{q_i(p_i-1)}(d_{j_1}\circ\dots
\circ d_{j_M}(x))^{q_i}\end{equation} uniquely determined and it
must vanish.
In $D_i(x^{p_i})^{q_i}$ we have only the term of \eqref{eq_depo} with nonzero coefficient. 
Since $q_i$ was minimal,  $D_{j}(x^{p_{j}})^{q_{j}} $ does not
contain the product \eqref{eq_depo}, if $j \ne i$. In such a
situation however this term cannot vanish, contradicting to the algebraic independence. 
This leads to the fact that $M=1$, i.e. every
$D_i(x)=c_i \cdot x$, for some complex constant $c_i$.

This clearly implies in general that every solution can be written
as $$f_i(x)=\sum_{j=1}^{n-1}c_{i,j} \varphi_j(x),$$ for some
constants $c_{i,j}\in \mathbb{C}$ and field homomorphisms
$\varphi_1, \dots \varphi_{n-1}\colon \mathbb{K}\to \mathbb{C}$.

Now let $\mathbb{K}$ be an arbitrary field of characteristic 0 and
assume that the statement is not true. Then by Theorem
\ref{Thm_main} there exist almost exponential polynomial solutions
defined on $\mathbb{K}^{\times}$ such that
\[
f_i =\sum_{j=1}^{n-1}P_{i,j}\varphi_{j} \not\equiv
\sum_{j=1}^{n-1}\alpha_{i, j}\varphi_{j}.
\]
Then there exists a finite set $S\subset \mathbb{K}$ which
guarantees this. The field generated by $S$ over $\mathbb{Q}$ is
isomorphic to field $\mathbb{K}\subset\C$ of finite transcendence
degree. Let us denote this isomorphism by $\Phi:\mathbb{Q}(S)\to
\mathbb{K}$. The previous argument provides that $f_i\circ \Phi$
satisfy \eqref{eq_lincomb}. Since $\Phi^{-1}$ is also an
isomorphism, $f_i$ satisfies \eqref{eq_lincomb}, as well. This
contradicts our assumption and finishes the proof.
\end{proof}
 \begin{rem}
 Here we note that the proof of Theorem \ref{thm_auto} essentially uses the fact that the field $\mathbb{K}$ has characteristic 0, that we assume throughout the whole paper.
 \end{rem}

The following example illustrates a special case when not all of
$f_i$ are of the form $c\cdot \varphi$. Theorem \ref{thmph} is
devoted to show that this is in some sense the exceptional case.
\begin{ex}\label{ex1}
  Let $\mathbb{K}$ be a field and $f, g, h\colon \mathbb{K}\to \mathbb{C}$ be additive functions such that
 \[
  f(x^{4})+g^{2}(x^{2})+h^{4}(x)=0
 \]
holds for all $x\in \mathbb{K}$. According to Theorem \ref{Thm_main}
define the $4$-additive function $F\colon \mathbb{K}^{4}\to
\mathbb{C}$ through
\begin{multline*}
 F(x_{1}, x_{2}, x_{3}, x_{4})=
 f(x_{1}x_{2}x_{3}x_{4})
 +\frac{1}{3}\left\{
 g(x_{1}x_{2})g(x_{3}x_{4})+g(x_{1}x_{3})g(x_{2}x_{4})\right.
 \\
 \left. +g(x_{1}x_{4})g(x_{2}x_{3})\right\}
 +h(x_{1})h(x_{2})h(x_{3})h(x_{4})
 \qquad
 \left(x\in \mathbb{K}\right).
\end{multline*}
The above equation yields that the trace of $F$ is identically zero,
thus $F$ itself is identically zero, too. From this we immediately
get that
\[
 F(x, 1, 1, 1)= h^3\left(1\right) h\left(x\right)+g\left(1\right) g\left(x\right)
 +f\left(x\right)
 \qquad
 \left(x\in \mathbb{K}\right),
\]
that is, the functions $f, g, h$ are linearly dependent. Using this,
we also have that
\[
 0
 =F(x, y, 1, 1)
 =-3 h^3\left(1\right) h\left(x y\right)-2 g\left(1\right) g
 \left(x y\right)+3 h^2\left(1\right) h\left(x\right) h\left(y
 \right)+2 g\left(x\right) g\left(y\right)
\]
has to be fulfilled by any $x, y\in \mathbb{K}$.

Define the functions $\chi, \varphi_{1}, \varphi_{2}\colon
\mathbb{K}\to \mathbb{C}$ as
\[
\begin{array}{rcl}
 \chi(x)&=& 3h(1)^{3}h(x)+2g(1)g(x)\\
 \varphi_{1}(x)&=&\sqrt{3}h(1)h(x)\\
 \varphi_{2}(x)&=&\sqrt{2}g(x)
 \end{array}
 \qquad
 \left(x\in \mathbb{K}\right)
\]
to obtain the Levi-Civit\`{a} equation
\[
 \chi(xy)=\varphi_{1}(x)\varphi_{1}(y)+\varphi_{2}(x)\varphi_{2}(y)
 \qquad
 \left(x, y\in \mathbb{K}\right).
\]

Using Theorems \ref{Szekely} and \ref{Thm_main}, we deduce that
there are homomorphisms $\varphi_{1}, \varphi_{2}\colon
\mathbb{K}\to \mathbb{C}$ and complex constants $\alpha_{1},
\alpha_{2}, \beta_{1}, \beta_{2}, \gamma_{1}, \gamma_{2}$ so that
\[
\begin{array}{rcl}
 g(x)&=&\alpha_{1}\varphi_{1}(x)+\alpha_{2}\varphi_{2}(x)\\
 h(x)&=&\beta_{1}\varphi_{1}(x)+\beta_{2}\varphi_{2}(x)\\
 f(x)&=&\gamma_{1}\varphi_{1}(x)+\gamma_{2}\varphi_{2}(x)
 \end{array}
 \qquad
 \left(x\in \mathbb{K}\right),
 \]
where the above complex numbers will be determined from the
functional equation.

Indeed, from one hand we have
\begin{multline*}
 -f(x^{4})= g^{2}(x^{2})+h^{4}(x)
 =
 \left(\alpha_{1}\varphi_{1}(x^{2})+\alpha_{2}\varphi_{2}(x^{2})\right)^{2}+
 \left(\beta_{1}\varphi_{1}(x)+\beta_{2}\varphi_{2}(x)\right)^{4}
 \\
 =
 \alpha_{1}\varphi_{1}(x)^{4}+2\alpha_{1}\alpha_{2}\varphi_{1}(x)^{2}\varphi_{2}(x)^{2}+\alpha_{2}^{2}\varphi_{2}(x)^{4}
 \\
  +\beta_{1}^{4}\varphi_{1}(x)^{4}+4\beta_{1}^{3}\beta_{2}\varphi_{1}(x)^{3}\varphi_{2}(x)+6\beta_{1}^{2}\beta_{2}^{2}\varphi_{1}(x)^{2}\varphi_{2}(x)^{2}+
 4\beta_{1}\beta_{2}^{3}\varphi_{1}(x)\varphi_{2}(x)^{3}+\beta_{2}^{4}\varphi_{2}(x)^{4} \\
 =
 \left(\alpha_{1}^{2}+\beta_{1}^{4}\right)\varphi_{1}(x)^{4}
 +
 \left(2\alpha_{1}\alpha_{2}+6\beta_{1}^{2}\beta_{2}^{2}\right)\varphi_{1}(x)^{2}\varphi_{2}(x)^{2}
 +
 \left(\alpha_{2}^{2}+\beta_{2}^{4}\right)\varphi_{2}(x)^{4}
 \\
 +4\beta_{1}^{3}\beta_{2}\varphi_{1}(x)^{3}\varphi_{2}(x)+ 4\beta_{1}\beta_{2}^{3}\varphi_{1}(x)\varphi_{2}(x)^{3}
 \end{multline*}
for all $x\in \mathbb{K}$.

On the other hand
\[
 -f(x^{4})=-\gamma_{1}\varphi_{1}(x^{4})-\gamma_{2}\varphi_{2}(x^{4})
 =
 -\gamma_{1}\varphi_{1}(x)^{4}-\gamma_{2}\varphi_{2}(x)^{4}
 \qquad
 \left(x\in \mathbb{K}\right).
\]
Bearing in mind Theorem \ref{L_lin_dep}, after comparing the
coefficients, we have especially that equations
\[
 \begin{array}{rcl}
  \alpha_{1}^{2}+\beta_{1}^{4}&=&-\gamma_{1}\\
  \alpha_{2}^{2}+\beta_{2}^{4}&=&-\gamma_{2}\\
  \alpha_{1}\alpha_{2}&=&0\\
  \beta_{1}\beta_{2}&=&0
 \end{array}
\]
have to be fulfilled. This yields however that
\[
 \begin{array}{rcl}
  f(x)&=&-g(1)^{2}\varphi_{1}(x)-h^{4}(1)\varphi_{2}(x)\\
  g(x)&=&g(1)\varphi_{1}(x)\\
  h(x)&=&h(1)\varphi_{2}(x)
 \end{array}
\qquad
 \left(x\in \mathbb{K}\right).
\]

\end{ex}

\

\

Without the loss a generality we can (and we also do) assume that
the parameters $q_{1}, \ldots, q_{n}$ are arranged in a strictly
increasing order, that is, $q_1<q_2<\dots<q_n$ holds.

\begin{thm}\label{thmph}
 Let $n\in \mathbb{N}$ be arbitrary and $\mathbb{K}$ a field.
 Assume that there are given natural numbers
 $p_{i}, q_{i}$ $(i=1, \dots,n)$ so that condition $(\mathscr{C})$ is satisfied.
 Let $f_1, \ldots, f_n$ be additive solutions of \begin{equation}\label{eq_vege}
     \sum_{i=1}^nf_i^{q_i}(x^{p_i})=0
 \end{equation}
 Then
 \begin{equation}
     f_i=
     \begin{cases}
     c_{i,j}\varphi_j& \textrm{ if } i>1 \textrm{ or } q_1\ne 1,
     \\
     \displaystyle\sum_{j=1}^{n-1} c_{1,j}\varphi_j & \textrm{ if } i=1 \textrm{ and } q_1=1,
  \end{cases}
  \end{equation}
where $\ph_1,\dots \ph_{n-1}\colon \mathbb{K}\to \mathbb{C}$ are
arbitrary field homomorphisms and $\displaystyle\sum_{i=1}^{n-1}
c_{i,j}^{q_i}=0$ for all $j=1, \dots, n$.
\end{thm}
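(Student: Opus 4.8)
I expect the author's proof to bootstrap from Theorem~\ref{thm_auto}, and that is the plan. Theorem~\ref{thm_auto} --- valid over an arbitrary field of characteristic $0$ --- already yields $f_i=\sum_{j=1}^{n-1}\alpha_{i,j}\varphi_j$ with $\alpha_{i,j}\in\mathbb{C}$ and field homomorphisms $\varphi_j\colon\mathbb{K}\to\mathbb{C}$; after merging equal ones I may assume $\varphi_1,\dots,\varphi_{n-1}$ are pairwise distinct (those occurring in none of the $f_i$ then stay arbitrary, as in the statement). Substituting this into \eqref{eq_vege} and using $\varphi_j(x^{p_i})=\varphi_j(x)^{p_i}$, the multinomial theorem turns the left side into a linear combination of the functions $\varphi_1^{s_1}\cdots\varphi_{n-1}^{s_{n-1}}$, all of total degree $|s|=N$. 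Since distinct field homomorphisms of a characteristic-$0$ field are algebraically independent over $\mathbb{C}$ (the discussion preceding Theorem~\ref{thm_indep2}), these functions are linearly independent, so every coefficient vanishes. Writing $g_i(y)=\sum_{j=1}^{n-1}\alpha_{i,j}\,y_j^{p_i}\in\mathbb{C}[y_1,\dots,y_{n-1}]$, this is exactly the polynomial identity
\[
 \sum_{i=1}^{n} g_i(y)^{q_i}=0 ,
\]
which is what I would work with from here on; all the structure of \eqref{eq_vege} has now been encoded algebraically.

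Put $S_i=\{\,j:\alpha_{i,j}\ne 0\,\}$, so that $g_i$ is a monomial precisely when $|S_i|\le 1$, and call $i$ \emph{exceptional} if $q_i\ge 2$ and $|S_i|\ge 2$. The crux --- and the step I expect to be the main obstacle --- is to show no index is exceptional. Suppose some index were, and pick an exceptional $i_0$ with $q_{i_0}$ \emph{maximal}, equivalently (by condition $(\mathscr{C})$) with smallest $p_{i_0}$. Fixing distinct $a,b\in S_{i_0}$, I would examine the monomial
\[
 M=y_a^{\,p_{i_0}(q_{i_0}-1)}\,y_b^{\,p_{i_0}} ,
\]
whose two exponents have greatest common divisor exactly $p_{i_0}$ and which genuinely involves both variables since $q_{i_0}\ge 2$. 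In $g_{i_0}^{\,q_{i_0}}$ the coefficient of $M$ is $q_{i_0}\,\alpha_{i_0,a}^{\,q_{i_0}-1}\alpha_{i_0,b}\ne 0$. For $i\ne i_0$, every monomial of $g_i^{\,q_i}$ has all exponents divisible by $p_i$, so $M$ can occur there only if $p_i\mid p_{i_0}$; as the $p_i$ are distinct, $p_i$ would be a \emph{proper} divisor of $p_{i_0}$, whence $p_i<p_{i_0}$ and $q_i=N/p_i>N/p_{i_0}=q_{i_0}\ge 2$. Moreover $M$ can then be produced only by the multi-index $t$ with $t_a=p_{i_0}(q_{i_0}-1)/p_i$, $t_b=p_{i_0}/p_i$ (positive integers summing to $q_i$) and all other entries $0$; a nonzero contribution forces $a,b\in S_i$, i.e.\ $|S_i|\ge 2$, so $i$ is exceptional with $q_i>q_{i_0}$, contradicting the choice of $i_0$. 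Hence $M$ occurs in no $g_i^{\,q_i}$ with $i\ne i_0$, so its coefficient in $\sum_i g_i^{\,q_i}$ is the nonzero number $q_{i_0}\alpha_{i_0,a}^{\,q_{i_0}-1}\alpha_{i_0,b}$, contradicting the identity above. So there is no exceptional index.

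With this in hand the rest is routine. Since $q_1<q_2<\cdots<q_n$, every $i\ge 2$ has $|S_i|\le 1$, and so does $i=1$ when $q_1\ge 2$; in all these cases $f_i=c_{i,j}\varphi_j$ for a single $j$ (take $c_{i,j}=\alpha_{i,j}$, or $f_i\equiv 0$). The only case left is $q_1=1$, where $f_1=\sum_j\alpha_{1,j}\varphi_j$ is exactly as stated. Finally, for $q_i\ge 2$ one has $g_i^{\,q_i}=\alpha_{i,j}^{\,q_i}y_j^{\,N}$, a single power of one variable, while $g_1^{\,q_1}=g_1=\sum_j\alpha_{1,j}y_j^{\,N}$ when $q_1=1$ (since then $p_1=N$); comparing the coefficient of each $y_j^{\,N}$ on the two sides of $\sum_{i=1}^n g_i^{\,q_i}=0$ gives the relation $\sum_i c_{i,j}^{\,q_i}=0$, the sum taken over the $i$ for which $\varphi_j$ occurs in $f_i$. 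That the matched pair of choices in the middle paragraph --- $i_0$ with $q_{i_0}$ maximal among exceptional indices, and $M$ of content exactly $p_{i_0}$ --- is precisely what lets the divisibility argument close (any competing cancellation is forced to come from an even worse exceptional index) is, I believe, the only real subtlety in the whole proof.
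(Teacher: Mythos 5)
Your proposal is correct and follows essentially the same route as the paper's proof: Theorem \ref{thm_auto} plus the algebraic independence of distinct homomorphisms reduces everything to the polynomial identity $\sum_i g_i^{q_i}=0$, and the coefficient of the very same key monomial $y_a^{p_i(q_i-1)}y_b^{p_i}$, combined with the divisibility observation $p_j\mid p_i \Leftrightarrow q_i\mid q_j$ and a maximality choice of $q_i$, forces all but one coefficient of each $g_i$ with $q_i\ge 2$ to vanish. Your organization of the descent (contradiction at a maximal \emph{exceptional} index) and your direct derivation of $\sum_i c_{i,j}^{q_i}=0$ by comparing coefficients of $y_j^{N}$ are somewhat more explicit than the paper's ``repeating the argument'' and its appeal to Theorem \ref{thm_sep}, but the underlying argument is identical.
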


\begin{proof}
By Theorem \ref{thm_auto} every solution \[
 f_{i}(x)=\sum_{j=1}^k c_{i,j}\ph_{j}(x)
 \qquad
 \left(x\in \mathbb{K}\right),
\] for some $c_{i,j}\in \mathbb{C}$
 thus the statement for $f_1$ if $q_1=1$ is trivial.

We show the rest of the statement by using a descending process as
follows.

Introducing the formal variables $x_1=\ph_1(x), \ldots,
x_k=\ph_k(x)$, equation \eqref{eqfi} yields that
\begin{equation}\label{eqex1}
\sum_{i=1}^n\left(c_{i,1}x_1^{p_i}+\ldots+c_{i,k}x_k^{p_i}\right)^{q_i}=0.
\end{equation}
By the polynomial theorem
\begin{equation}\label{eqex2}
\sum_{i=1}^n\sum_{J_{i,1}+\ldots+J_{i,k}=q_i}\frac{q_i!}{J_{i,1}!\cdot
\ldots \cdot J_{i,k}!}c_{i,1}^{J_{i,1}}\cdot \ldots \cdot
c_{i,k}^{J_{i,k}}\cdot x_1^{J_{i,1}p_i}\cdot \ldots \cdot
x_k^{J_{i,k}p_i}=0.
\end{equation}
Since we have distinct homomorphisms it follows, by Theorem \ref{thm_indep2}, that
the coefficient of each monomial term of the polynomial in equation
\eqref{eqex2} must be zero. Two addends belong to the same monomial
term if and only if
\[
J_{i,1}p_{i}=J_{j,1}p_{j}, \ldots, J_{i,k}p_{i}=J_{j,k}p_{j}.
\]
If $q_i\geq 2$ then we choose the values $J_{i,1}=1$,
$J_{i,2}=q_i-1, \ J_{i,3}=\ldots=J_{i,k}=0.$ For each addend
belonging to the same monomial term
\begin{align*}
      p_i & =J_{j,1}p_j, \\
     p_i(q_i-1) & =J_{j,2}p_j,\\
     J_{j,3}  =\ldots & =J_{j,k}=0.
\end{align*}

This means that $p_j$ divides $p_i$ or, in an equivalent way, $q_i$
divides $q_j$. Without loss of generality we can suppose that $q_i$
is the maximal among the possible powers. Therefore there is no any
addend belonging to the same monomial term as
$x_1^{p_i}x_2^{p_i(q_i-1)}$. Since $q_i\geq 2$ it follows that
$c_{i,1}=0$ or $c_{i,2}=0$. Repeating the argument for arbitrary
pair $x_k$ and $x_l$ we get that except at most one  $c_{i,j}=0$.
This immediately implies equation \eqref{eq_vege}.

Finally, condition
\[
\sum_{i=1}^n c_{i,j}^{q_i}=0
\]
clearly follows from Theorem \ref{thm_sep}.
\end{proof}

\begin{ex}\label{ex2}
 Let $\mathbb{K}$ be a field. Illustrating the previous results we consider all additive solutions $f_{1}, f_{2}, f_{3}, f_{4}\colon \mathbb{K}\to \mathbb{C}$ of
 \begin{equation}
     f_{1}^{2}(x^{6})+f_{2}^{3}(x^{4})+f_{3}^{4}(x^{3})+f_{4}^{6}(x^{2})=0
  \qquad
  \left(x\in \mathbb{K}\right).
 \end{equation}
with   $f_{i}(1)\neq 0$ for $i=1, 2, 3, 4$.

We distinguish two cases. If every $f_i$ is of the form
$c_i\varphi$, then
\[
c_1^2+c_2^3+c_3^4+c_4^6=0
\]
and $\varphi$ can be any homomorphism.

If not, then there are two different field homomorphisms
$\varphi_{1}, \varphi_{2}$ such that
\begin{align*}
f_{i}&=c_{i}\varphi_1,\\
f_{j}&=c_{j}\varphi_2.
\end{align*}
for some $1\le i\ne j\le 4$.

Practically, the only possible option is that $i_1, i_2 \in \{1,
2,3, 4\}$ are such that
\begin{align*}
f_{i_1}&=c_{i_1}\varphi_1,\\  f_{i_2}&=c_{i_2}\varphi_1.
\end{align*}
and for $j_1, j_2\in \{1, 2,3, 4\}\setminus \{i_1, i_2\}$ we have
\begin{align*}
f_{j_1}&=c_{j_1}\varphi_2,\\  f_{j_2}&=c_{j_2}\varphi_2.
\end{align*}
It also clearly follows that
\[
\begin{array}{rcl}
c_{i_1}^{q_{i_1}}+c_{i_2}^{q_{i_2}}&=&0\\[0.4cm]
c_{j_1}^{q_{j_1}}+c_{j_2}^{q_{j_2}}&=&0.
\end{array}
\]

For instance, if $i_1=1, i_2=2, j_1=3, j_4=4$, then we get that
\[
\begin{array}{rcl}
f_1&=&c_1 \varphi_1,\\
f_2&=&c_2 \varphi_1, \\
f_3&=&c_3 \varphi_2, \\
f_4&=&c_4 \varphi_4,
\end{array}
\]
where $c_1^2+c_2^3=c_3^4+c_4^6=0$ and
$\varphi_1,\varphi_2\colon\mathbb{K}\to \mathbb{C}$ are arbitrary
field homomorphisms. \
\end{ex}

\subsection{Summary}

We can assume that $0<q_1<q_2<\dots<q_n$. As a consequence of
Theorem \ref{thmph} we get that for a given system of solutions
$f_i$ of \eqref{eqfi} the index set $\mathscr{I}=\{1,\dots, n\}$ can
be decomposed into some subsets $\mathscr{I}_1, \dots,
\mathscr{I}_k$ ($k<n$) such that
\[
\bigcup_{j=1}^k \mathscr{I}_j=\mathscr{I}
\]
and
\[
\bigcap_{j=1}^k \mathscr{I}_j =
\begin{cases}
\emptyset& \textrm{ if } q_1\ne 1\\
\{1\} & \textrm{ if } q_1= 1.
\end{cases}
\]
If $q_1\ne 1$, then for every $\mathscr{I}_j$ ($j=1, \dots, k<n$)
there exists injective homomorphisms $\varphi_j\colon\mathbb{K}\to
\mathbb{C}$ such that $f_i=c_i\varphi_j$ and $\sum_{i\in
\mathscr{I}_j}c_i^{q_i}=0$. If $q_1=1$, then $f_1=\sum_{j=1}^k
c_{1,j} \varphi_j$, $f_i=c_i\varphi_j$ if $1\ne i \in \mathscr{I}_j$
and $$c_{1,j}+\sum_{i\in \mathscr{I}_j, i\ne 1} c_i^{q_i}=0.$$

Conversely, if there are given a partition $\mathscr{I}_{j}$
($j=1,\dots,k$) of $\{1, \dots, n\}$ such that except maybe element
$1$, the sets are disjoint, then for every field homomorphism
$\varphi_1, \dots, \varphi_k\colon\mathbb{K}\to \mathbb{C}$  we get
a solution of \eqref{eqfi} as
\[
f_i =
\begin{cases}
c_i\varphi_j & \textrm{ if } i\in \mathscr{I}_j \textrm{ and either } i\ne 1 \textrm{ or } q_1\ne 1 \\
\sum_{j=1}^k c_{1,j} \varphi_j & \textrm{ if } q_1= 1 \textrm { and
} i=1,
\end{cases}
\]
where $\sum_{i\in +\mathscr{I}_j} c_i^{q_i}=0$ if $q_1\ne 1$,
otherwise $c_{1,j}+\sum_{i\in +\mathscr{I}_j, i\ne 1} c_i^{q_i}=0$.
Additionally, we get that for every set $\mathscr{I}_j$ the system
of $f_i=c_i\varphi_j$ ($i\in \mathscr{I}_j$), where $c_i$ satisfy
the previous equation, is a solution of \eqref{eqfi}.
This is a sub-term of \eqref{eqfi}, thus it seems reasonable that we
are just looking for solutions that do not satisfies any partial
equation of \eqref{eqfi}.

We say that the system of functions $f_{1}, \ldots, f_{n}$ form an
\emph{irreducible} solution if it does not satisfy a sub-term of
\eqref{eqfi}.

\begin{cor}\label{thm1}
 Under the assumptions of Theorem \ref{thmph}, let $f_{1}, \ldots, f_{n}\colon \mathbb{K}\to \mathbb{C}$
 be additive \emph{irreducible} solutions of \eqref{eqfi}.
 Then for all $i= 1, \ldots, n$,
 \[
 f_i(x)=c_i \cdot \ph(x) \qquad (x\in \K),
 \]
 where $\ph\colon \mathbb{K}\to \mathbb{C}$ is an arbitrary field homomorphism and $c_i\in \mathbb{C}$ satisfies
\[
\sum_{i=1}^n c_i^{q_i}=0.
\]
\end{cor}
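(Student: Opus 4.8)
The plan is to derive this as a direct corollary of Theorem \ref{thmph} by simply observing that the irreducibility hypothesis kills all the degenerate branches in the structure theorem. Theorem \ref{thmph} already tells us that each $f_i$ is of the form $c_{i,j}\varphi_j$ (a single homomorphism), except possibly $f_1$ when $q_1=1$, in which case $f_1=\sum_{j=1}^{n-1}c_{1,j}\varphi_j$. So the first step is to rule out this exceptional case under the irreducibility assumption: if $q_1=1$ and $f_1$ genuinely involved two or more distinct homomorphisms, then grouping the terms of \eqref{eqfi} by which homomorphism appears, as in the proof of Theorem \ref{thm_sep} via Lemma \ref{lem_tils} and Theorem \ref{thm_indep2}, splits \eqref{eqfi} into a nontrivial collection of partial equations, contradicting irreducibility. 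Hence even when $q_1=1$ we must have $f_1=c_{1,j}\varphi_j$ for a single $j$.

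Next I would argue that all the $f_i$ must in fact share the \emph{same} homomorphism $\varphi$. Suppose not: then, by Theorem \ref{thmph}, the index set $\{1,\dots,n\}$ partitions (disjointly, once the $q_1=1$ exception is eliminated) into at least two nonempty blocks $\mathscr{I}_1,\dots,\mathscr{I}_k$ with $k\geq 2$, where $f_i=c_i\varphi_j$ for $i\in\mathscr{I}_j$. Substituting into \eqref{eqfi} and using that the $\varphi_j$ are distinct homomorphisms, Theorem \ref{thm_indep2} (or the reasoning in the Summary paragraph: for each block the partial sum $\sum_{i\in\mathscr{I}_j}f_i^{q_i}(x^{p_i})=\varphi_j(x^N)\sum_{i\in\mathscr{I}_j}c_i^{q_i}$ must vanish separately) shows that $\sum_{i\in\mathscr{I}_j}f_i^{q_i}(x^{p_i})=0$ for each $j$. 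Since $k\geq 2$, each block is a proper sub-term of \eqref{eqfi}, contradicting irreducibility. Therefore $k=1$ and there is a single homomorphism $\varphi=\varphi_1$ with $f_i(x)=c_i\varphi(x)$ for all $i$ and all $x\in\mathbb{K}$.

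Finally, substituting $f_i=c_i\varphi$ into \eqref{eqfi} gives
\[
0=\sum_{i=1}^n c_i^{q_i}\varphi(x^{p_i})^{q_i}=\sum_{i=1}^n c_i^{q_i}\varphi(x)^{p_iq_i}=\varphi(x)^{N}\sum_{i=1}^n c_i^{q_i}
\qquad\left(x\in\mathbb{K}\right),
\]
using $p_iq_i=N$ from condition $(\mathscr{C})$ and the multiplicativity of $\varphi$. Since $\varphi$ is a nonzero field homomorphism, $\varphi(x)\neq 0$ for some $x$, so $\sum_{i=1}^n c_i^{q_i}=0$. Conversely, any $\varphi$ and any constants $c_i$ with $\sum c_i^{q_i}=0$ evidently yield a solution, and this solution is irreducible precisely when no proper subsum of the $c_i^{q_i}$ vanishes — but the statement only claims the form, not a full irreducibility characterization, so this remark can be omitted or included briefly. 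The main obstacle, such as it is, is bookkeeping: making sure the $q_1=1$ exceptional branch of Theorem \ref{thmph} is correctly excluded, since that is the one place where $f_i$ could a priori fail to be a single-homomorphism multiple; once that is handled, everything else is a short application of the algebraic-independence machinery already established.
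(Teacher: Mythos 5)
Your proposal is correct and follows essentially the same route as the paper: the corollary is obtained directly from the block decomposition of the index set established in Theorem \ref{thmph} and the Summary (each block $\mathscr{I}_j$ yields a vanishing partial sum $\sum_{i\in\mathscr{I}_j}f_i^{q_i}(x^{p_i})=0$, so irreducibility forces a single block and hence a single homomorphism), with the $q_1=1$ branch eliminated for the same reason and the relation $\sum_{i=1}^n c_i^{q_i}=0$ read off by substitution using $p_iq_i=N$.
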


\subsection{Special cases}
The following statement which is only about the \emph{real-valued}
solutions, is an easy observation which allows us to focus on the
important cases henceforth.

\begin{prop}
  Let $n\in \mathbb{N}$ be arbitrary, $\mathbb{K}$ a field,
 $f_{1}, \ldots, f_{n}\colon \mathbb{K}\to \mathbb{R}$ additive functions. Suppose further that we are given natural numbers
 $p_{1}, \ldots, p_{n}, q_{1}, \ldots, q_{n}$ so that they fulfill condition $(\mathscr{C})$.
 If equation  \eqref{Eq1.3} is satisfied for all $x\in \mathbb{K}$ by the functions $f_{1}, \ldots, f_{n}$ and
the parameters fulfill
\[
 q_{i}=2k_{i}
 \qquad
 \left(i=1, \ldots, n\right)
\]
with certain positive integers $k_{1}, \ldots, k_{n}$, then all the
functions $f_{1}, \ldots, f_{n}$ are identically zero.
\end{prop}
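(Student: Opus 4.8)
The plan rests on a positivity argument. Since every $f_i$ is real-valued and each exponent $q_i = 2k_i$ is even, each summand $f_i^{q_i}(x^{p_i}) = \bigl(f_i(x^{p_i})\bigr)^{2k_i}$ is nonnegative for every $x\in\mathbb{K}$. A sum of nonnegative real numbers can vanish only if every term vanishes, so equation \eqref{Eq1.3} forces
\[
 f_i\bigl(x^{p_i}\bigr) = 0 \qquad (x\in\mathbb{K},\ i=1,\ldots,n).
\]
Thus the system decouples completely, and it remains to show that for each fixed $i$ the identity $f_i(x^{p_i})\equiv 0$ already implies $f_i\equiv 0$.

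For this I would reuse the symmetrization trick behind Lemma \ref{L1}. If $p_i=1$ there is nothing to prove. If $p_i\geq 2$, consider the map
\[
 A_i(x_1,\ldots,x_{p_i}) = \frac{1}{p_i!}\sum_{\sigma\in\mathscr{S}_{p_i}} f_i\bigl(x_{\sigma(1)}\cdots x_{\sigma(p_i)}\bigr) \qquad (x_1,\ldots,x_{p_i}\in\mathbb{K}).
\]
By the additivity of $f_i$ this $A_i$ is symmetric and $p_i$-additive, and its diagonalization is $A_i(x,\ldots,x) = f_i(x^{p_i}) = 0$. Since multiplication by $p_i!$ is injective on the range (the field $\mathbb{R}$, being a $\mathbb{Q}$-vector space, is torsion-free), Lemma \ref{mainfact} yields $A_i\equiv 0$. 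Evaluating at $x_1=x$ and $x_2=\cdots=x_{p_i}=1$ — where every permutation produces the same product $x\cdot 1\cdots 1 = x$ — gives $f_i(x) = A_i(x,1,\ldots,1) = 0$ for all $x\in\mathbb{K}$. Hence $f_i\equiv 0$, and since $i$ was arbitrary, all the functions $f_1,\ldots,f_n$ vanish identically.

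The argument is essentially immediate, so there is no genuine obstacle here; the only points worth a word of care are that $p_i\geq 2$ is the sole nontrivial case and that the polarization step is legitimate, which it is because the target is torsion-free. One could equally substitute $x\mapsto x+t$ in $f_i(x^{p_i})=0$ and expand by the binomial theorem, but routing through Lemma \ref{mainfact} keeps the bookkeeping minimal.
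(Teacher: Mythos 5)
Your proof is correct and follows essentially the same route as the paper: both arguments observe that each summand is an even power of a real number, hence nonnegative, so the vanishing sum forces $f_i(x^{p_i})=0$ for every $i$ and $x$. The only difference is that you spell out the final implication $f_i(x^{p_i})\equiv 0 \Rightarrow f_i\equiv 0$ via the symmetrization argument and Lemma \ref{mainfact}, a step the paper leaves implicit; your justification of it is accurate.
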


\begin{proof}
 If the parameters fulfill
\[
 q_{i}=2k_{i}
 \qquad
 \left(i=1, \ldots, n\right)
\]
with certain positive integers $k_{1}, \ldots, k_{n}$, then equation
\eqref{Eq1.3} can be rewritten as
\[
 \sum_{i=1}^{n}\left(f^{k_{i}}_{i}\left(x^{p_{i}}\right)\right)^{2}=0
 \qquad
 \left(x\in \mathbb{K}\right),
\]
in other words, we received that the sum of nonnegative real numbers
has to be zero, that implies that all the summands has to be zero
for all $x\in \mathbb{K}$. Thus the functions $f_{1}, \ldots,
f_{n}\colon \mathbb{K}\to \mathbb{R}$ are identically zero.
\end{proof}

As an application of the results above, first we study the case
\[
f_i(x)=a_i\cdot f(x) \qquad \left( x\in \mathbb{K},\; i=1, \ldots, n
\right),
\]
where $a_{1}, \ldots, a_{n}$ are given complex numbers so that at
least one of them is nonzero.

\begin{thm}\label{thm4}
 Let $n\in \mathbb{N}$ be arbitrary, $\mathbb{K}$ a field. Assume that there are given natural numbers
 $p_{1}, \ldots, p_{n}, q_{1}, \ldots, q_{n}$ so that they fulfill condition $(\mathscr{C})$.
 The function
 $ f\colon \mathbb{K}\to \mathbb{C}$ is an additive solution of
 \begin{equation}\label{Eq1.62}
  \sum_{i=1}^{n}(a_i\cdot f)^{q_{i}}\left(x^{p_{i}}\right)=0
 \end{equation}
 if and only if
 \begin{equation}\label{eqfe1}
     f(x)=c \cdot \ph(x),
      \end{equation}
where $\ph\colon \mathbb{K}\to \mathbb{C}$ is a homomorphism and for
the constant $c$ equation
\begin{equation}\label{eqfe2}
    \sum_{i=1}^n (c\cdot a_i)^{q_i}=0
\end{equation}
also has to be satisfied.
\end{thm}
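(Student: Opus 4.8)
The plan is to reduce everything to the classification of solutions already obtained, namely Theorem~\ref{thmph}, applied to the functions $f_i := a_i f$ (after reordering the triples $(p_i,q_i,a_i)$ so that $q_1<\cdots<q_n$, which changes neither \eqref{Eq1.62} nor \eqref{eqfe1}--\eqref{eqfe2}). The sufficiency is a one-line check: if $f = c\,\varphi$ with $\varphi\colon\mathbb{K}\to\mathbb{C}$ a homomorphism, then by multiplicativity and condition $(\mathscr{C})$ we get $(a_i\cdot f)^{q_i}(x^{p_i}) = (a_i c)^{q_i}\varphi(x)^{p_i q_i} = (a_i c)^{q_i}\varphi(x)^{N}$, hence $\sum_{i=1}^n (a_i\cdot f)^{q_i}(x^{p_i}) = \varphi(x)^N\sum_{i=1}^n (a_i c)^{q_i}$, which vanishes identically precisely when \eqref{eqfe2} holds.

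For the converse I would put $f_i := a_i f$; these are additive and, by hypothesis, solve \eqref{Eq1.3}, so Theorem~\ref{thmph} applies. Since some $a_{i_0}$ is nonzero, $f = a_{i_0}^{-1} f_{i_0}$. If some nonzero coefficient has index $i_0\ge 2$, or if $a_1\ne 0$ and $q_1\ne 1$, then by Theorem~\ref{thmph} the corresponding $f_{i_0}$ (resp.\ $f_1$) is a scalar multiple of a single field homomorphism $\varphi$, and so is $f$. The only remaining configuration is $q_1 = 1$ with $a_1$ the unique nonzero coefficient; then \eqref{Eq1.62} collapses to $a_1 f(x^{p_1}) = 0$ with $p_1 = N\ge 2$, so $f(x^N)\equiv 0$ on $\mathbb{K}$. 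Replacing $x$ by $x+r$ with $r\in\mathbb{Q}\subseteq\mathbb{K}$, expanding by the binomial theorem, and using that the additive $f$ is $\mathbb{Q}$-homogeneous, the left-hand side becomes a polynomial in $r$ with infinitely many roots, so all its coefficients $\binom{N}{k}f(x^k)$ vanish; in particular $f\equiv 0$, and one takes $c = 0$ with $\varphi$ arbitrary. (This is also immediate from the coefficient relations recorded in Theorem~\ref{thmph}.)

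In every case $f = c\,\varphi$ for a homomorphism $\varphi$, and substituting back into \eqref{Eq1.62} gives $\varphi(x)^N\sum_{i=1}^n (a_i c)^{q_i} = 0$; evaluating at $x=1$ (where $\varphi(1)=1$) yields \eqref{eqfe2}, while if $c=0$ this is trivial. The substantive content sits entirely in Theorem~\ref{thmph} (and, beneath it, Theorem~\ref{thm_auto}), which already dispatch the delicate ``degenerate'' cases through the differential-operator and automorphism machinery; the only genuinely new point here is that proportionality of the $f_i$ forces the common exponential to be a \emph{single} homomorphism, and the sole instance where that does not follow directly from Theorem~\ref{thmph} is the boundary sub-case above. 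I expect that boundary sub-case — and, more generally, keeping careful track of when the a priori linear combination of homomorphisms genuinely collapses to one term — to be the only real obstacle.
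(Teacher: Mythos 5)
Your argument is correct, but it does not follow the paper's own route. The paper proves Theorem~\ref{thm4} (implicitly, via the Remark following Theorem~\ref{Thm_main}) by a much more elementary argument: setting $f_i=a_if$ in the symmetrized identity $F(x,y,1,\ldots,1)=0$ collapses equation \eqref{Eq_lc} to a single Pexider-type equation $\alpha f(xy)=\beta f(x)f(y)$, whence $f$ is a constant multiple of a multiplicative function, and additivity upgrades that multiplicative function to a homomorphism; no exponential-polynomial or differential-operator machinery is needed. You instead invoke the full classification of Theorem~\ref{thmph} applied to $f_i:=a_if$ and then observe that proportionality of the $f_i$ forces $f$ itself to be a scalar multiple of a single homomorphism. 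This is legitimate (Theorem~\ref{thmph} precedes Theorem~\ref{thm4}, so there is no circularity), and your treatment of the boundary sub-case ($q_1=1$ with $a_1$ the only nonzero coefficient, where the equation degenerates to $f(x^N)\equiv 0$ and forces $f\equiv 0$) is a point the paper glosses over entirely; note that this sub-case also follows at once from Lemma~\ref{L1}, since polarization gives $f(x_1\cdots x_N)=0$ and hence $f(x)=f(x\cdot 1\cdots 1)=0$, without the binomial expansion. What your route buys is uniformity (everything is a corollary of the main classification); what the paper's route buys is that Theorem~\ref{thm4} becomes self-contained and independent of the heavy results of Section~4, which is presumably why the authors single it out as a ``special case''. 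Both the sufficiency computation and the derivation of \eqref{eqfe2} by evaluating at $x=1$ are fine.
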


According to a result of Darboux \cite{Dar77}, the only function
$f\colon \mathbb{R}\to \mathbb{R}$ that is additive and
multiplicative is of the form
\[
 f(x)=0
 \qquad
 \text{or}
 \qquad
 f(x)=x
 \qquad
 \left(x\in \mathbb{R}\right).
\]
From this, we get also that every homomorphism $f\colon
\mathbb{R}\to \mathbb{C}$ is of the form
\[
 f(x)=\kappa\cdot x
 \qquad
 \left(x\in \mathbb{R}\right),
\]
where $\kappa\in \left\{ 0, 1\right\}$.

\begin{cor}
  Let $n\in \mathbb{N}$ be arbitrary and assume that there are given natural numbers
 $p_{1}, \ldots, p_{n}, \allowbreak q_{1}, \ldots, q_{n}$ so that they fulfill condition $(\mathscr{C})$.
 Let
 $f_{1}, \ldots, f_{n}\colon \mathbb{R}\to \mathbb{C}$ be additive solutions of \eqref{Eq1.3}.
 Then and only then, there are complex numbers $c_{1}, \ldots, c_{n}$ with the property
 \[
  \sum_{i=1}^{n}c_{i}^{q_{i}}=0
 \]
so that for all $i=1, \ldots, n$
\[
 f_{i}(x)=c_{i}\cdot x
 \qquad
 \left(x\in \mathbb{R}\right).
\]

\end{cor}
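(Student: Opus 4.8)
The plan is to deduce this over $\mathbb{R}$ from the general structure theorem already obtained, Theorem~\ref{thm_auto}, together with the rigidity of field homomorphisms on $\mathbb{R}$ recalled just above via Darboux's theorem. First I would apply Theorem~\ref{thm_auto} with $\mathbb{K}=\mathbb{R}$ (a field of characteristic $0$, so the hypothesis is satisfied): any system of additive solutions $f_{1},\ldots,f_{n}$ of \eqref{Eq1.3} can be written as $f_{i}(x)=\sum_{j=1}^{n-1}\alpha_{i,j}\varphi_{j}(x)$ for suitable $\alpha_{i,j}\in\mathbb{C}$ and field homomorphisms $\varphi_{j}\colon\mathbb{R}\to\mathbb{C}$.

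Next I would use the consequence of Darboux's theorem stated before the corollary: every homomorphism $\mathbb{R}\to\mathbb{C}$ has the form $x\mapsto\kappa x$ with $\kappa\in\{0,1\}$. Hence each $\varphi_{j}$ is either identically zero or the canonical embedding $x\mapsto x$. Discarding the zero ones and collecting the remaining terms, this collapses the representation to $f_{i}(x)=c_{i}x$ for all $x\in\mathbb{R}$, where $c_{i}=\sum_{j\,:\,\varphi_{j}\neq 0}\alpha_{i,j}$. Substituting back into \eqref{Eq1.3} and using $f_{i}^{q_{i}}(x^{p_{i}})=(c_{i}x^{p_{i}})^{q_{i}}=c_{i}^{q_{i}}x^{p_{i}q_{i}}=c_{i}^{q_{i}}x^{N}$, the equation becomes $x^{N}\sum_{i=1}^{n}c_{i}^{q_{i}}=0$ for every $x\in\mathbb{R}$; evaluating at $x=1$ gives $\sum_{i=1}^{n}c_{i}^{q_{i}}=0$. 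For the converse, if $f_{i}(x)=c_{i}x$ with $\sum_{i=1}^{n}c_{i}^{q_{i}}=0$, then each $f_{i}$ is additive and the same computation shows that \eqref{Eq1.3} holds, which settles the "only then" direction as well.

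I do not expect a genuine obstacle here: the whole content is carried by Theorem~\ref{thm_auto}, and what remains is the elementary rigidity fact plus a one-line substitution. The only point deserving a word of care is the collapsing step -- one must observe that over $\mathbb{R}$ the homomorphisms $\varphi_{j}$ can only take the two values $0$ and $\mathrm{id}$, so no genuine multi-homomorphism phenomenon survives and the general form of Theorem~\ref{thm_auto} degenerates to a single linear term $c_{i}x$ for each $i$.
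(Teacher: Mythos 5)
Your proposal is correct and follows exactly the route the paper intends: the corollary is stated immediately after the Darboux-based observation that every homomorphism $\mathbb{R}\to\mathbb{C}$ is $x\mapsto\kappa x$ with $\kappa\in\{0,1\}$, so the intended (unwritten) proof is precisely your combination of Theorem~\ref{thm_auto} with this rigidity, followed by the substitution yielding $\sum_{i=1}^{n}c_{i}^{q_{i}}=0$. No gaps.
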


The above corollary shows that for real functions every solution of
equation \eqref{Eq1.3} is automatically continuous (in fact even
analytic) without any regularity assumption.

\section{Open problems and perspectives}\label{s5}

In the last section of our paper we list some open problems as well
as we try to open up new perspectives concerning the investigated
problem.

\begin{dfn}
Let $(G, +)$ be an Abelian group and $n\in \mathbb{N}$, a function
$f\colon G\to \mathbb{C}$ is termed to be a \emph{(generalized)
monomial of degree $n$} if it fulfills the so-called \emph{monomial
equation}, that is,
\[
\Delta^{n}_{y}f(x)=n! f(y) \qquad \left(x, y\in G\right).
\]
\end{dfn}

\begin{rem}
 Obviously generalized monomials of degree $1$ are nothing else but additive functions.
 Furthermore, generalized monomials of degree $2$ are solutions of the equation
 \[
\Delta^{2}_{y}f(x)=n! f(y) \qquad \left(x, y\in G\right),
\]
which is equivalent to the so-called \emph{square norm equation},
i.e.,
\[
 f(x+y)+f(x-y)=2f(x)+2f(y)
 \qquad
 \left(x, y\in G\right).
\]
In this case for the mapping $f\colon G\to \mathbb{C}$ the term
\emph{quadratic mapping} is used as well.
\end{rem}

\begin{prop}
 Let $G$ be an Abelian group and $n\in \mathbb{N}$. A function $f\colon G\to \mathbb{C}$
 is a generalized monomial of degree $n$, if and only if, there exists a symmetric, $n$-additive
 function $F\colon G^{n}\to \mathbb{C}$ so that
 \[
  f(x)=F(x, \ldots, x)
  \qquad
  \left(x\in G\right).
 \]
\end{prop}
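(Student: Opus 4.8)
The plan is to treat the two implications separately, the forward one being immediate. If $f=F^{\ast}$ for some symmetric $n$-additive $F\colon G^{n}\to\mathbb{C}$, then the corollary of the Polarization formula (Theorem~\ref{Thm_polarization}) applies verbatim and gives $\Delta^{n}_{y}f(x)=\Delta^{n}_{y}F^{\ast}(x)=n!\,F^{\ast}(y)=n!\,f(y)$ for all $x,y\in G$, so $f$ is a generalized monomial of degree $n$; no further work is needed here.

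For the converse, assume $\Delta^{n}_{y}f(x)=n!\,f(y)$ for all $x,y\in G$. First I would observe that evaluating this identity at $x+y$ and at $x$ and subtracting yields $\Delta^{n+1}_{y}f(x)=\Delta^{n}_{y}f(x+y)-\Delta^{n}_{y}f(x)=0$, that is, $\Delta^{n+1}_{y}f\equiv 0$ for every $y\in G$. By the classical polarization theory of polynomial functions on Abelian groups (valid here because the range $\mathbb{C}$ is a vector space over $\mathbb{Q}$; see \cite{Sze91}), this ``diagonal'' condition upgrades to the vanishing of all mixed differences, i.e.\ $\Delta_{y_{1}}\cdots\Delta_{y_{m}}f\equiv 0$ for all $m\geq n+1$ and all $y_{1},\dots,y_{m}\in G$.

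Now I would define $F\colon G^{n}\to\mathbb{C}$ by $F(y_{1},\dots,y_{n})=\frac{1}{n!}\,\Delta_{y_{1}}\cdots\Delta_{y_{n}}f(e)$, where $e$ denotes the identity of $G$, and verify the three required properties. Symmetry of $F$ is clear since the operators $\Delta_{y}$ commute. Additivity in each variable follows from the elementary operator identity $\Delta_{a+b}=\Delta_{a}+\Delta_{b}+\Delta_{a}\Delta_{b}$, which gives
\[
F(y_{1},\dots,y_{n-1},a+b)-F(y_{1},\dots,y_{n-1},a)-F(y_{1},\dots,y_{n-1},b)=\frac{1}{n!}\,\Delta_{y_{1}}\cdots\Delta_{y_{n-1}}\Delta_{a}\Delta_{b}f(e)=0,
\]
the last expression being a product of $n+1$ difference operators applied to $f$; symmetry then yields additivity in every slot, so $F$ is symmetric and $n$-additive. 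Finally, diagonalizing and invoking the monomial equation once more (with the two variables interchanged),
\[
F^{\ast}(x)=F(x,\dots,x)=\frac{1}{n!}\,\Delta^{n}_{x}f(e)=\frac{1}{n!}\cdot n!\,f(x)=f(x)\qquad(x\in G),
\]
which finishes the converse.

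The only non-routine step is the polarization passage from $\Delta^{n+1}_{y}f\equiv 0$ (for all $y$) to the vanishing of all $(n+1)$-fold mixed differences; this is exactly where I would quote the classical polynomial-function theory of \cite{Sze91}. Alternatively one may quote the structure theorem expressing such an $f$ as a sum $f_{0}+\dots+f_{n}$ of monomials of degrees $0,\dots,n$, substitute this back into the monomial equation to obtain $\sum_{k=0}^{n-1}f_{k}\equiv 0$, and then eliminate the lower-degree terms by using the homogeneity $f_{k}(my)=m^{k}f_{k}(y)$ together with a polynomial-in-$m$ argument, exactly as in the first Remark of the paper. Everything else uses only the Polarization formula and elementary properties of the difference operator.
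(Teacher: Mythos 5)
Your argument is correct. Note that the paper itself offers no proof of this proposition: it is quoted in the final section as a known structural fact about generalized monomials, so there is no ``paper proof'' to compare against. Your forward direction is exactly the corollary of the Polarization formula, and your converse is the standard construction $F(y_{1},\dots,y_{n})=\frac{1}{n!}\Delta_{y_{1}}\cdots\Delta_{y_{n}}f(0)$. The one genuinely non-routine ingredient is the passage from $\Delta^{n+1}_{y}f\equiv 0$ for every single increment $y$ to the vanishing of all $(n+1)$-fold \emph{mixed} differences $\Delta_{y_{1}}\cdots\Delta_{y_{n+1}}f$; you correctly isolate this and it is exactly the classical Djokovi\'c--Sz\'ekelyhidi equivalence of the two notions of polynomial function on an Abelian group, which holds here without restriction since the range $\mathbb{C}$ is a $\mathbb{Q}$-vector space. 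With that quoted, the symmetry of $F$ (commutativity of the $\Delta_{y}$), its $n$-additivity (via $\Delta_{a+b}=\Delta_{a}+\Delta_{b}+\Delta_{a}\Delta_{b}$ producing an $(n+1)$-fold mixed difference), and the identity $F^{\ast}(x)=\frac{1}{n!}\Delta^{n}_{x}f(0)=f(x)$ all check out, so the proof is complete.
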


\begin{opp}[Higher order generalized monomial solutions]
 In this paper we determined the additive solutions of equation \eqref{Eq1.3}.
 It would be however interesting to determine the higher order monomial solutions of the equation in question.
 More precisely, the following problem would also be of interest.
 Let $n, k\in \mathbb{N}$ be arbitrary, $\mathbb{K}$ a field, $f_{1},
\ldots, f_{n}\colon \mathbb{K}\to \mathbb{C}$  generalized monomials
of degree $k$. Suppose further that we are given natural numbers
$p_{1}, \ldots, p_{n}, q_{1}, \ldots, q_{n}$ so that
\[
\tag{$\mathscr{C}$}
 \begin{array}{rcl}
  p_{i}&\neq&p_{j} \qquad \text{ for } \quad i\neq j\\
  q_{i}&\neq&q_{j} \qquad \text{ for } \quad i\neq j\\
  1<p_{i}\cdot q_{i}&=&N\qquad \text{ for } i=1, \ldots, n
 \end{array}
\]
Suppose also that equation
\begin{equation}
 \sum_{i=1}^{n}f^{q_{i}}_{i}\left(x^{p_{i}}\right)=0
\end{equation}
is satisfied. Here the question is, whether we can say something
more about these functions $f_{1}, \ldots\allowbreak , f_{n}$?

We remark that in case $k\geq 2$, we do not know whether such `nice'
representation for the functions $f_{1}, \ldots, f_{n}$ as in
Theorem \ref{Thm_main} can be expected.

At the same time, there are cases when the representation is `nice'
as well as previously. To see this, let us consider the following
problem. Assume that for the quadratic function $f\colon
\mathbb{K}\to \mathbb{C}$ we have
\[
 f(x^{2})=f(x)^{2}
 \qquad
 \left(x\in \mathbb{K}\right).
\]
Since $f$ is a generalized monomial of degree $2$, there exists a
symmetric bi-additive function $F\colon \mathbb{K}^{2}\allowbreak
\to \mathbb{C}$ so that
\[
 F(x, x)= f(x)
 \qquad
 \left(x\in \mathbb{K}\right).
\]
Define the symmetric $4$-additive mapping $\mathcal{F}\colon
\mathbb{K}^{4}\to \mathbb{C}$ through
\begin{multline*}
 \mathcal{F}(x_{1}, x_{2}, x_{3}, x_{4})=
 F(x_{1}x_{2}, x_{3}x_{4})
 +  F(x_{1}x_{3}, x_{2}x_{4})
 + F(x_{1}x_{4}, x_{2}x_{3})
 \\
 -F(x_{1}, x_{2})F(x_{3}, x_{4})
 -F(x_{1}, x_{3})F(x_{2}, x_{4})
 -F(x_{1}, x_{4})F(x_{2}, x_{3})
 \qquad
 \left(x_{1}, x_{2}, x_{3}, x_{4}\in \mathbb{K}\right).
\end{multline*}
Since
\[
 \mathcal{F}(x, x, x, x)= 3 \left(F(x^{2}, x^{2})-F(x, x)^{2}\right)=3 \left(f(x^{2})-f(x)^{2}\right)=0
 \qquad
 \left(x\in \mathbb{K}\right),
\]
the mapping $\mathcal{F}$ has to be identically zero on
$\mathbb{K}^{4}$. Therefore, especially
\[
0=\mathcal{F}(1, 1, 1, 1)= 3F(1, 1)-3F(1, 1)^{2},
\]
yielding that either $F(1, 1)=0$ or $F(1, 1)=1$. Moreover,
\[
 0= \mathcal{F}(x, 1, 1, 1) =
 3F\left(x , 1\right)-3F\left(1 , 1\right)F\left(x , 1\right)
 \qquad
 \left(x\in \mathbb{K}\right),
\]
from which either $F(1, 1)=1$ or $F(x, 1)=0$ follows for any $x\in
\mathbb{K}$.

Using that
\[
 0= \mathcal{F}(x, x, 1, 1)=
 F(x^2 , 1)-F\left(1 , 1\right)\,F\left(x , x\right)+2F \left(x , x\right)-2F^2\left(x , 1\right)
 \qquad
 \left(x\in \mathbb{K}\right),
\]
we obtain that
\[
 \left(F(1, 1)-2\right)F\left(x , x\right)= F(x^2 , 1)-2F^2\left(x , 1\right)
 \qquad
 \left(x\in \mathbb{K}\right).
\]
Now, if $F(1, 1)=0$, then according to the above identities $ F(x,
1)=0 $ would follow for all $x\in \mathbb{K}$. Since $\mathcal{F}(x,
x, 1, 1)=0$ is also fulfilled by any $x\in \mathbb{K}$, this
immediately implies that
\[
 -2f(x)=-2F(x, x)= F(x^{2}, 1)-F(x, 1)^{2}=0
 \qquad
 \left(x\in \mathbb{K}\right),
\]
i.e., $f$ is identically zero.

In case $F(1, 1)\neq 0$, then necessarily $F(1, 1)=1$ from which
\[
 -F(x, x)= F(x^{2}, 1)-2F(x, 1)^{2}
 \qquad
 \left(x\in \mathbb{K}\right).
\]
Define the non-identically zero additive function $a\colon
\mathbb{K}\to \mathbb{C}$ by
\[
 a(x)=F(x, 1)
 \qquad
 \left(x\in \mathbb{K}\right)
\]
to get that
\[
 f(x)=F(x, x)= -F(x^{2}, 1)+2F(x, 1)^{2}= 2a(x)^{2} -a(x^{2})
 \qquad
 \left(x\in \mathbb{K}\right).
\]
Since $\mathcal{F}(x, x, x, x)=0$ has to hold, the additive function
$a\colon \mathbb{K}\to \mathbb{C}$ has to fulfill identity
\[
 -a(x^4)+a^2(x^2)+4a^2(x)a(x^2)-4a^4(x)=0
 \qquad
 \left(x\in \mathbb{K}\right)
\]
too.

In what follows, we will show that the additive function $a$ is of a
rather special form.

Indeed,
\[
 0= \mathcal{F}(x, y, z, 1)
 \qquad
 \left(x, y, z\in \mathbb{K}\right)
\]
means that $a$ has to fulfill equation
\[
 a(x)a(yz)+a(y)a(xz)+a(z)a(xy)= 2a(x)a(y)a(z)+a(xyz)
 \qquad
 \left(x, y, z\in \mathbb{K}\right)
\]
Let now $z^{\ast}\in \mathbb{K}$ be arbitrarily fixed to have
\[
 a(x)a(yz^{\ast})+a(y)a(xz^{\ast})+a(z^{\ast})a(xy)= 2a(x)a(y)a(z^{\ast})+a(xyz^{\ast})
 \qquad
 \left(x, y, z\in \mathbb{K}\right).
\]
Define the additive function $A\colon \mathbb{K}\to \mathbb{C}$ by
\[
 A(x)=a(xz^{\ast})-a(z^{\ast})a(x)
 \qquad
 \left(x\in \mathbb{K}\right)
\]
to receive that
\[
 A(xy)=a(x)A(y)+a(y)A(x)
 \qquad
 \left(x, y\in \mathbb{K}\right),
 \]
 which is a special convolution type functional equation.
Due to Theorem 12.2 of \cite{Sze91}, we get that
\begin{enumerate}[(a)]
 \item the function $A$ is identically zero, implying that $a$ has to be multiplicative.
 Note that $a$ is additive, too. Thus, for the quadratic mapping $f\colon \mathbb{K}\to \mathbb{C}$ there exists a
 homomorphism $\varphi\colon \mathbb{K}\to \mathbb{C}$ such that
 \[
  f(x)=\varphi(x)^{2}
  \qquad
  \left(x\in \mathbb{K}\right).
 \]

\item or there exists multiplicative functions $m_{1}, m_{2}\colon \mathbb{K}\to \mathbb{C}$
and a complex constant $\alpha$ such that
\[
 a(x)=\frac{m_{1}(x)+m_{2}(x)}{2}
 \qquad
 \left(x\in \mathbb{K}\right)
\]
and
\[
 A(x)=\alpha \left(m_{1}(x)-m_{2}(x)\right)
  \qquad
 \left(x\in \mathbb{K}\right).
\]
Due to the additivity of $a$, in view of the definition of the
mapping $A$, we get that $A$ is additive, too.

This however means that both the maps $m_{1}+m_{2}$ and
$m_{1}-m_{2}$ are additive, from which the additivity of $m_{1}$ and
$m_{2}$ follows, yielding that they are in fact homomorphisms.

Since
\[
 F(x, x)= f(x)= 2a(x)^{2}-a(x^{2})
 \qquad
 \left(x\in \mathbb{K}\right),
\]
we obtain for the quadratic function $f\colon \mathbb{K}\to
\mathbb{C}$ that there exist homomorphisms $\varphi_{1},
\varphi_{2}\colon \mathbb{K}\to \mathbb{C}$ such that
\[
 f(x)=\varphi_{1}(x)\varphi_{2}(x)
 \qquad
 \left(x\in \mathbb{K}\right).
\]
\end{enumerate}

Summing up, we received the following: identity
\[
 f(x^{2})= f(x)^{2}
 \qquad
 \left(x\in \mathbb{K}\right)
\]
holds for the quadratic function $f\colon \mathbb{K}\to \mathbb{C}$
if and only if there exists homomorphisms $\varphi_{1},
\varphi_{2}\colon \mathbb{K}\to \mathbb{C}$ such that
\[
 f(x)=\varphi_{1}(x)\varphi_{2}(x)
 \qquad
 \left(x\in \mathbb{K}\right).
\]
\end{opp}

\begin{opp}[Not necessarily additive solutions]
 Motivated by the above open problem as well as Remark \ref{rem_noadd}, we can also pose the question below.

Let $n\in \mathbb{N}$ be arbitrary, $\mathbb{K}$ a field, $f_{1},
\ldots, f_{n}\colon \mathbb{K}\to \mathbb{C}$ be generalized or
exponential polynomials. Suppose further that we are given natural
numbers $p_{1}, \ldots, p_{n}, q_{1}, \ldots, q_{n}$ so that they
fulfill condition $(\mathscr{C})$. Suppose also that equation
\begin{equation}\label{eqfin}
 \sum_{i=1}^{n}f^{q_{i}}_{i}\left(x^{p_{i}}\right)=0
\end{equation}
is satisfied.

In Example \ref{ex3} we gave a solution of consisting non-additive
solutions of \eqref{eqreex1}. Namely,
\begin{align*}
    f(x)&=-(20+4a(x)+a^2(x))x,\\
    g(x)&=2(1+a(x))x,\\
    h(x)&=2x
\end{align*}
The functions $f,g$ and $h$ are exponential polynomial solutions of
\eqref{eqreex1}. Thus it is clear that Theorem \ref{thmph} do not
hold without additivity of $f,g$ and $h$.

Again, the question is, how can we characterize (exponential)
polynomial solutions of \eqref{eqfin}?
\end{opp}



%

\begin{opp}[Solutions on rings and on fields of finite characteristic]
 As it already appears in the definition of homomorphisms, the natural domain and also the natural range of the functions in \eqref{eqreex1}
 are rings.

On the other hand, it is easy to see that there is no nontrivial field
homomorphism from $\mathbb{K}$ to $\mathbb{C}$, if the
characteristic of $\mathbb{K}$ is finite. The careful reader can
also deduce using our methods, that already \eqref{eqreex1} has
no solution in this case. At the same time, it can be easily seen
that the equation has solutions if the functions $f_i\colon \mathbb{K}\to \mathbb{L}$
are constant multiple of a field homomorphism where
$\mathbb{K}$ and $\mathbb{L}$ has the same characteristic.

According to this, the general question arises how the solutions of
equation \eqref{Eq1.3} look like in case when the functions $f_{1},
\ldots, f_{n}$ are defined between (not necessarily commutative)
rings?
\end{opp}

\begin{opp}[Regular solutions in case $\mathbb{K}=\mathbb{C}$]

To pose our last open problem, here we recall the following.
Concerning homomorphisms, instead of $\mathbb{R}$, in $\mathbb{C}$
the situation is completely different, see Kestelman \cite{Kes51},
since we have the following.

\begin{prop}
 The only \emph{continuous} endomorphisms $f\colon \mathbb{C}\to \mathbb{C}$ are
 $f\equiv 0$, $f\equiv \mathrm{id}$ or
 \[
  f(x)=\overline{x}
  \qquad
  \left(x\in \mathbb{C}\right).
 \]
These endomorphisms are referred to as \emph{trivial endomorphisms}.
\end{prop}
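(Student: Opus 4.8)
The plan is to combine the two algebraic hypotheses---additivity and multiplicativity---with the single analytic hypothesis of continuity, first determining $f$ on the real line and then extending to all of $\mathbb{C}$ via the decomposition $z = a + bi$.

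First I would isolate the degenerate case. Multiplicativity gives $f(1) = f(1\cdot 1) = f(1)^2$, so $f(1) \in \{0, 1\}$. If $f(1) = 0$, then $f(x) = f(x\cdot 1) = f(x)f(1) = 0$ for every $x \in \mathbb{C}$, i.e.\ $f \equiv 0$. Hence from now on I may assume $f(1) = 1$.

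Next I would pin down $f$ on $\mathbb{R}$. The restriction $f|_{\mathbb{R}}\colon \mathbb{R}\to\mathbb{C}$ is a continuous solution of Cauchy's additive functional equation, so by the classical regularity theorem continuity upgrades $\mathbb{Q}$-linearity to $\mathbb{R}$-linearity, whence $f(x) = f(1)\,x = x$ for all $x \in \mathbb{R}$; in particular $f$ fixes $\mathbb{R}$ pointwise. Now I would determine the single remaining value $f(i)$: from $i^2 = -1$ and multiplicativity, $f(i)^2 = f(i^2) = f(-1) = -1$, so $f(i) \in \{i, -i\}$. Finally, for an arbitrary $z = a + bi$ with $a, b \in \mathbb{R}$, additivity together with the values just computed gives
\[
 f(z) = f(a) + f(b)f(i) = a + b\,f(i),
\]
which equals $z$ when $f(i) = i$ and $\overline{z}$ when $f(i) = -i$. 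Conversely, $f \equiv 0$, $f = \mathrm{id}$ and $z \mapsto \overline{z}$ are all continuous endomorphisms of $\mathbb{C}$, so the list is exhaustive.

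The argument is essentially routine; the only non-trivial ingredient is the classical fact that a continuous (indeed, merely locally bounded or measurable) solution of the Cauchy functional equation is $\mathbb{R}$-linear. This is precisely the point at which continuity is indispensable: without a regularity assumption there exist pathological additive---and even additive and multiplicative---self-maps of $\mathbb{C}$, so the conclusion genuinely fails. Thus the hard part is merely to invoke this regularity theorem correctly on $\mathbb{R}$; everything after that is elementary algebra.
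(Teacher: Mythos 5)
Your proof is correct and complete: the split on $f(1)\in\{0,1\}$, the use of the regularity theorem for the Cauchy equation to force $f|_{\mathbb{R}}=\mathrm{id}$, the determination $f(i)^{2}=-1$, and the final decomposition $f(a+bi)=a+b\,f(i)$ together give exactly the three listed endomorphisms, and the converse direction is trivial. The paper itself offers no proof of this proposition --- it is stated as a classical fact with a citation to Kestelman --- so there is nothing to compare against; your argument is the standard one and correctly identifies continuity (via the regularity theory of the Cauchy equation) as the only non-elementary ingredient.
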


Concerning \emph{nontrivial endomorphisms} we quote here the
following.

\begin{prop}
 \begin{enumerate}[(i)]
  \item There exist nontrivial automorphisms of $\mathbb{C}$.
  \item If $f\colon \mathbb{C}\to \mathbb{C}$ is a nontrivial automorphism, then $f\vert_{\mathbb{R}}$ is discontinuous.
  \item If $f\colon \mathbb{C}\to \mathbb{C}$ is a nontrivial automorphism, then the closure of the set $f(\mathbb{R})$ is the whole complex plane.
  \item If $f\colon \mathbb{C}\to \mathbb{C}$ is a nontrivial automorphism, then $f(\mathbb{R})$ is a proper subfield of $\mathbb{C}$,
  $\mathrm{card}(f(\mathbb{R}))=\mathfrak{c}$ and either the planar (Lebesgue) measure of $f(\mathbb{R})$ is zero or $f(\mathbb{R})\subsetneq \mathbb{C}$ is a
  saturated non-measurable set.
 \end{enumerate}

\end{prop}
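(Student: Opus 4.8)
The plan is to prove (i) by a transcendence-basis construction and to reduce (ii)--(iv) to one elementary principle: any ring automorphism $f$ of $\mathbb{C}$ that behaves tamely on $\mathbb{R}$ --- is continuous there, or maps $\mathbb{R}$ into $\mathbb{R}$, or maps $\mathbb{R}$ onto a set of positive planar measure --- is forced to equal $\mathrm{id}$ or complex conjugation, i.e.\ to be trivial. I would first record the two elementary facts used repeatedly: every ring automorphism $f$ of $\mathbb{C}$ fixes $1$, hence fixes $\mathbb{Z}$ and then $\mathbb{Q}$ pointwise; and $f(i)^{2}=f(-1)=-1$ forces $f(i)\in\{i,-i\}$, so any $f$ that in addition fixes $\mathbb{R}$ pointwise equals $\mathrm{id}$ if $f(i)=i$ and conjugation if $f(i)=-i$. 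For (i): by Zorn's lemma fix a transcendence basis $B$ of $\mathbb{C}$ over $\mathbb{Q}$; since $\mathbb{C}$ is not algebraic over any countable subfield, $|B|\geq 2$, so there is a nontrivial bijection $\pi\colon B\to B$. As $\mathbb{Q}(B)$ is purely transcendental, $\pi$ induces an automorphism $\sigma_{0}$ of $\mathbb{Q}(B)$; and since $\mathbb{C}$ is an algebraic closure of $\mathbb{Q}(B)$, the isomorphism extension theorem extends $\sigma_{0}$ to an automorphism $\sigma$ of $\mathbb{C}$ with $\sigma|_{B}=\pi\neq\mathrm{id}$, which is therefore nontrivial.

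For (ii): were $f|_{\mathbb{R}}$ continuous, it would agree with $\mathrm{id}$ on the dense set $\mathbb{Q}$, hence $f|_{\mathbb{R}}=\mathrm{id}_{\mathbb{R}}$, and then by the remark above $f$ would be trivial --- a contradiction. For (iii): the topological closure $\overline{f(\mathbb{R})}$ is again a subfield of $\mathbb{C}$ (addition and multiplication are continuous, and inversion is continuous away from $0$), and it contains the closure of $\mathbb{Q}$, namely $\mathbb{R}$; a closed subfield of $\mathbb{C}$ that contains $\mathbb{R}$ is an $\mathbb{R}$-linear subspace of the two-dimensional real vector space $\mathbb{C}$, so it equals $\mathbb{R}$ or $\mathbb{C}$. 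If it equalled $\mathbb{R}$, then $f|_{\mathbb{R}}\colon\mathbb{R}\to\mathbb{R}$ would be a ring homomorphism; such a map carries squares to nonnegative numbers, hence is monotone, and as it also fixes $\mathbb{Q}$ it must be $\mathrm{id}_{\mathbb{R}}$, making $f$ trivial --- a contradiction. Hence $\overline{f(\mathbb{R})}=\mathbb{C}$.

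For (iv): write $G:=f(\mathbb{R})$, a subfield of $\mathbb{C}$ isomorphic to $\mathbb{R}$; thus $\mathrm{card}\,G=\mathfrak{c}$, and $G$ is proper because $\mathbb{C}$ is algebraically closed while $\mathbb{R}$ is not. Viewing $G$ as an additive subgroup of $\mathbb{C}\cong\mathbb{R}^{2}$, note that $G$ is dense by (iii). If $G$ had positive inner Lebesgue measure it would contain a compact set of positive measure, whence by the Steinhaus theorem $G=G-G$ contains a neighbourhood of $0$; then $G$ is an open, hence clopen, subgroup of the connected group $\mathbb{R}^{2}$, i.e.\ $G=\mathbb{C}$ --- contradicting properness. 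So $\lambda_{*}(G)=0$, and in particular a measurable $G$ must be null, which is the first alternative. If $G$ is non-measurable, take a measurable cover $T$ of $G$ (a measurable set with $G\subseteq T$ and $\lambda^{*}(G\cap A)=\lambda(T\cap A)$ for all measurable $A$); since $G+g=G$ for every $g\in G$ and measurable covers are unique modulo null sets, $T$, and hence $V:=\mathbb{C}\setminus T$, is translation-invariant by $G$ modulo null sets. If $\lambda(V)>0$, then the essential stabiliser of the indicator function of $V$ (the set of $g$ under whose translation that indicator is unchanged almost everywhere) is a closed subgroup of $\mathbb{R}^{2}$ containing the dense group $G$, hence is all of $\mathbb{R}^{2}$, forcing the indicator of $V$ to be a.e.\ constant; since $\lambda(V)>0$ this makes $V$ co-null and $G\subseteq T$ null, contradicting non-measurability. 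Therefore $\lambda(V)=0$, i.e.\ $G$ has full outer measure on every bounded box, and together with $\lambda_{*}(G)=0$ (which symmetrically gives that $\mathbb{C}\setminus G$ is thick as well) this is precisely the assertion that $G$ is saturated non-measurable. The one step I expect to need genuine care is this last passage from ``non-measurable'' to ``saturated non-measurable''; the remaining assertions are routine, and the whole proposition is classical material on automorphisms of $\mathbb{C}$ and on subgroups of $\mathbb{R}^{n}$ (cf.\ Kestelman \cite{Kes51} and Kuczma \cite{Kuc09}).
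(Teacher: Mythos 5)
The paper does not actually prove this proposition: it appears in the closing ``Open problems and perspectives'' section as quoted classical background, with a pointer to Kestelman \cite{Kes51} (and the surrounding material in Kuczma \cite{Kuc09}). So there is no in-paper argument to compare against; what you have written is a self-contained proof of facts the authors take for granted. Your overall architecture is sound and is the standard one: transcendence basis plus the isomorphism extension theorem for (i), the ``fixes $\mathbb{Q}$, hence fixes $\mathbb{R}$ if continuous or order-preserving, hence is trivial'' principle for (ii) and (iii), and Steinhaus plus a translation zero--one law via measurable covers for (iv). Parts (ii)--(iv) are correct; the only step I would ask you to write out more carefully in (iv) is the claim that the essential stabiliser of $\mathbf{1}_{V}$ is closed when $V$ has infinite measure (localize, or invoke continuity of translation in $L^{1}_{\mathrm{loc}}$) --- but this is routine.

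There is one genuine, if easily repaired, gap in (i). ``Nontrivial'' in this proposition means \emph{neither} the identity \emph{nor} complex conjugation, but you only verify $\sigma\vert_{B}=\pi\neq\mathrm{id}$, which rules out the identity alone. An arbitrary nontrivial bijection $\pi$ of $B$ could in principle produce $\sigma(b)=\overline{b}$ on $B$ and hence $\sigma=$ conjugation (e.g.\ if $B$ happened to be stable under conjugation and $\pi$ were the induced involution). The fix is one line: since $\operatorname{card}(B)=\mathfrak{c}\geq 3$, choose $b_{1}\in B$ and $b_{2}\in B\setminus\{b_{1},\overline{b_{1}}\}$ and let $\pi$ be the transposition of $b_{1}$ and $b_{2}$; then $\sigma(b_{1})=b_{2}$ differs both from $b_{1}$ and from $\overline{b_{1}}$, so $\sigma$ is neither $\mathrm{id}$ nor conjugation. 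Also note that your assertion $\lvert B\rvert\geq 2$ ``since $\mathbb{C}$ is not algebraic over any countable subfield'' deserves the stronger and equally standard statement $\lvert B\rvert=\mathfrak{c}$, which is what the choice of $b_{2}$ above really uses. With these two sentences added, the proof is complete.
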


 As we saw above the continuous endomorphisms of $\mathbb{C}$ are of really pleasant form.
 This immediately implies that the \emph{continuous} solutions of equation \eqref{Eq1.3}
 in case $\mathbb{K}=\mathbb{C}$ also have the same beautiful structure.

 Obviously, the continuity assumption can be weakened to guarantee the same result.
 At the same time, our question is whether instead of a regularity assumption,
 an additional \emph{algebraic} supposition for the unknown functions would imply the same?
 \end{opp}

\noindent \textbf{Acknowledgement.} The authors are grateful to
\emph{Professors Mikl\'os Laczkovich} and \emph{L\'aszl\'o
Sz\'e\-kely\-hi\-di} for their valuable remarks and also for their
helpful discussions.

The research of the first author has been supported by the Hungarian
Scientific Research Fund (OTKA) Grant K 111651. The publication is
also supported by the EFOP-3.6.1-16-2016-00022 project. The project
is co-financed by the European Union and the European Social Fund.
The second author was supported by the internal research project
R-AGR-0500 of the University of Luxembourg and by the Hungarian
Scientific Research Fund (OTKA) K104178. The third author was
supported by EFOP 3.6.2-16-2017-00015.


\end{document}